\newtheorem{thm}{Theorem}[section]
\newtheorem{lem}[thm]{Lemma}
\newcounter{counter_conj-lem}
\newtheorem{conj-lem}[thm]{Conjecture-Lemma}
\newtheorem{prop}[thm]{Proposition}
\newtheorem{ex}[thm]{Example}
\newcounter{counter_conj-prop}
\newtheorem{conj-prop}[thm]{Conjecture-Proposition}
\newcounter{counter_conj-thm}
\newtheorem{conj-thm}[thm]{Conjecture-Theorem}
\newtheorem{cor}[thm]{Corollary}
\newtheorem{defi}[thm]{Definition}
\theoremstyle{remark}
\newtheorem{rem}[thm]{Remark}
\newtheorem{example}[thm]{Example}
\DeclareMathOperator{\LL}{L}
\DeclareMathOperator{\C}{\mathbb{C}}
\DeclareMathOperator{\N}{\mathbb{N}}
\DeclareMathOperator{\PGL}{PGL}
\DeclareMathOperator{\nefh}{Nef_{\mathbb{H}}}
\DeclareMathOperator{\R}{\mathbb{R}}
\DeclareMathOperator{\dtop}{d_{top}}
\DeclareMathOperator{\Pg}{\mathbb{P}}
\DeclareMathOperator{\ch}{cosh}
\DeclareMathOperator{\HH}{\mathbb{H}}
\DeclareMathOperator{\NS}{NS}
\DeclareMathOperator{\Bir}{Bir}
\DeclareMathOperator{\SL}{SL}
\title[A central limit theorem for Cremona transformations]{A central limit theorem for the degree of a random product of Cremona transformations}
\author{Nguyen-Bac Dang, Giulio Tiozzo}
\thanks{Stony Brook University, USA, \texttt{nguyen-bac.dang@stonybrook.edu}}
\thanks{University of Toronto, Canada, \texttt{tiozzo@math.utoronto.ca}}
\begin{document}

\begin{abstract}
We prove a central limit theorem for the algebraic and dynamical degrees of a random composition of Cremona transformations.
\end{abstract}

\date{February 22, 2021}

\maketitle

\section{Introduction}

A rational map $f : \Pg^2 \dashrightarrow \Pg^2$, defined over the field of complex numbers $\C$, is a function which is given in homogeneous coordinates by 
\begin{equation*}
f([x: y : z]) = [P_0(x,y,z):P_1(x,y,z):P_2(x,y,z)],
\end{equation*} 
where $P_0,P_1 $ and $P_2$ are homogeneous polynomials in $\C[x,y,z]$ of the same degree $d\in \mathbb{N}$ with no common factor. 
We say that $f$ is \textbf{dominant} if its image is not contained in an algebraic curve.
The integer $d$, denoted $\deg(f)$, is called the \textbf{degree} (or \textbf{algebraic degree}) of $f$ and is in general distinct from the \textbf{topological degree} of $f$, which is the number of preimages by $f$ of a general point. 
When the topological degree of $f$ is one, we say that $f$ is \textbf{birational} or that $f$ defines a \textbf{Cremona transformation}.
%A rational map $f$ of $\Pg^2$ is called a Cremona transformation, or a birational transformation if its topological degree is $1$.

Unlike the situation on $\Pg^1$, the algebraic degree of $f \circ g$ where $f$ and $g$ are two rational maps on $\Pg^2$ is not equal in general to the product $\deg(f) \deg(g)$, but it satisfies a submultiplicative property: 
\begin{equation} \label{E:submult}
\deg(f\circ g) \leqslant \deg(f) \deg(g).
\end{equation}
Using this fact, one can define the (\textbf{first}) \textbf{dynamical degree} of $f$ \cite{russakovskii_shiffman}, denoted $\lambda_1(f)$, given by 
\begin{equation} \label{E:lambda-one}
\lambda_1(f) := \lim_{n\rightarrow +\infty}\deg(f^n)^{1/n}.
\end{equation}
This dynamical quantity is invariant under birational 
conjugacy  \cite{dinh_sibony_une_borne_sup,tuyen2,dang2017degrees} and measures the growth rate of the preimages of a generic hyperplane on $\Pg^2$.   

\bigskip
The degree and the dynamical degree of an arbitrary composition are quite difficult to predict in general and this is due to the presence of points where the rational maps are not defined (called  \emph{indeterminacy points}) and their behavior under iteration (see \cite[Proposition 1.4.3]{sibony_1999}). 

As a result, the growth of the sequence $(\deg(f^n))$ and the dynamical degree of a given rational map $f$ has been the subject of much research, 
and is known only in certain cases: for endomorphisms of a projective variety, monomial maps  \cite{lin_algebraic_stability_and_degree_growth,
favrewulcandegree}, birational surfaces maps \cite{diller_favre,blanc_cantat}, polynomial automorphisms and endomorphisms of the affine plane \cite{friedland_milnor,furter,
valuative_tree,favre_jonsson_eigenvaluations,
favre_jonsson_dynamical_compactification}, meromorphic surface maps (under certain assumptions) 
\cite{boucksom_favre_jonsson_deggrowth},
birational transformations of hyperk\"ahler manifolds \cite{bianco}, certain automorphisms of the affine $3$-space \cite{blanc_santen_automorphism_degree_3,blanc_santen} and certain rational maps associated to matrix inversions \cite{maillard_growth_complexity,maillard_complexity,
 maillard_baxter_birational,bedford_kim_matrix_inversions,
 bedford_truong_matrix_inversion}.
  Starting from dimension $3$, the degree sequences are partially known for  birational transformations with very slow degree growth \cite{cantat_xie_degrees} or for specific examples \cite{deserti}, for a specific group of automorphisms on $\SL_2(\C)$ \cite{dang_tame}, while a lower bound on unbounded degree sequences was recently obtained for a large class of birational transformations in \cite{lonjou_urech}.

Note, however, that when $f,g$ are generic  maps (i.e. belong to suitable Zariski open subsets of the space of rational maps of degree $d$), the product 
satisfies $\deg(f\circ g)  = \deg(f) \deg(g)$. In other words, if $f, g$ are chosen ``randomly" enough, then $\lambda_1(f) = \deg(f)$ and the degree of a product behaves well. 
 This simple, but natural observation motivates the current paper. 

\bigskip
Let us fix  a probability measure $\mu$ on the space of Cremona transformations whose support is countable.  
    We consider the \textbf{random product} 
    $$f_n:= g_1 g_2 \ldots g_n$$ 
    where $(g_n)$ is a sequence of $i.i.d.$ random elements of $G$ chosen with distribution $\mu$; thus,  the sequence $(f_n)$ describes a \textbf{random walk} in the space of  birational maps of $\Pg^2$.
    
\bigskip
Our aim is to understand the distribution of the sequences of algebraic degrees $(\deg(f_n))$ and dynamical degrees $(\lambda_1(f_n))$. 
Heuristically, one expects the sequence  $(\log \deg(f_n))$ to be close to the sum $\log\deg (g_1) + \ldots + \log \deg (g_n)$. Since the logarithmic sum $\sum \log \deg(g_i)$ satisfies the classical law of large numbers and a central limit theorem,  the sequence $(\log \deg(f_n))$ should also display similar features.  
    
For the law of large numbers, observe that because of \eqref{E:submult} 
the sequence $(\log \deg (f_n))$ is subadditive and Kingman's theorem asserts that under a finite moment condition
 \begin{equation} \label{E:finite-log}
 \int_G \log \deg(f ) \ d\mu(f) < +\infty,
\end{equation}    
there exists a constant $\ell_\mu \geqslant 0$ such that %the sequence 
\begin{equation}
\lim_{n \to \infty} \dfrac{1}{n} \log \deg(f_n) = \ell_\mu
\end{equation}
almost surely. 
In other words, the sequence $\log \deg(f_n)$ follows a law of large numbers. 
This was shown in the case of rational maps on $\Pg^k$ in \cite{hindes2019dynamical}, where the quantity $\ell_\mu$ is referred to as the 
\emph{random dynamical degree}. 
For birational maps of $\Pg^2$, it was shown in \cite{maher_tiozzo_cremona} that the limit $\ell_\mu$ is positive if the semigroup generated by the support of $\mu$ is non-elementary.  Moreover, in that case the limit 
$$\lim_{n\rightarrow \infty}\frac{1}{n} \log\lambda_1(f_n)$$ 
exists almost surely (even though $(\log \lambda_1(f_n))$ is \emph{not} subadditive) and also equals $\ell_\mu$ whenever the support of $\mu$ is bounded. 

\bigskip
The various quantities described above have an explicit description when one restricts to monomial maps (\cite{lin_algebraic_stability_and_degree_growth,favrewulcandegree}).  
A monomial map  on $\Pg^k$ is a transformation of the form:
\begin{equation*}
g_M : [x_0: \ldots :x_k] \mapsto [x_0^{m_{00}} x_1^{m_{01}} \ldots x_k^{m_{0k}}: \ldots : x_0^{m_{k0}} x_1^{m_{k1}} \ldots x_k^{m_{kk}}],
\end{equation*}   
where $M:= (m_{ij})_{i\leq k, j\leq k}$ are the entries of a $(k+1) \times (k+1)$ matrix with integer coefficients. 
Using the fact that $g_{M N} = g_M \circ g_N$ for any two matrices $M,N$, a random product of monomial maps is induced by a random product of matrices. Since the degree of $g_M$ is a multiple of the largest coefficient of $M$ and the dynamical degree is the spectral radius of $M$, the convergence of $\frac{1}{n} \log \deg(f_n)$ follows from  
Oseledets' theorem \cite{oseledets}, while the central limit theorem for $\log \deg(f_n)$ can be deduced using Furstenberg-Kesten's results \cite{furstenberg_kesten_product_random_matrices}.   
Apart from the above special situation, very little is known for the behavior of the degree of a random product of birational transformations of $\Pg^k$ for $k\geq 3$. 

\bigskip

The main result of our paper shows that $\log \deg(f_n)$ for Cremona maps on $\Pg^2$ satisfies a central limit theorem, in the sense that the difference $\frac{\log \deg(f_n) -  n \ell_\mu}{\sqrt{n}}$ converges to either a Gaussian or a ``folded" Gaussian.
Let $\mu_n$ be the distribution of $f_n$, and let $C_b(\R)$ be the space of bounded, continuous functions $\varphi : \R \to \R$.
Given $\sigma \geqslant 0$, we denote as $\mathcal{N}_{\sigma}$ the \emph{Gaussian measure} of variance $\sigma$ and mean $0$, i.e. the probability measure 
$d\mathcal{N}_\sigma(t) = \frac{1}{\sqrt{2\pi} \sigma} e^{-\frac{t^2}{2 \sigma^2}} dt$ if $\sigma > 0$, and the $\delta$-mass at $0$ if $\sigma = 0$. 
Given $\sigma >0$, we define the \emph{folded Gaussian measure} centered at $0$ and of variance $\sigma$ as the probability 
measure $\mathcal{FN}_{\sigma}$ 
on $\mathbb{R}$ defined as the pushforward of a Gaussian measure of mean $0$ and variance $\sigma$ under the map $x \mapsto |x| $. 

\medskip

\noindent \textbf{Theorem A.}
 \label{T:main}
Let $G$ be a countable semigroup of Cremona transformations and let $\mu$ be a measure whose support generates $G$ satisfying
\begin{equation} \label{E:moment2}
\int_G {\sqrt{\deg(f)}} \ d \mu(f) < + \infty.
\end{equation}
Then there exists $\ell \geqslant 0$ such that  
the following two properties hold.
\begin{enumerate}
\item[(i)] (CLT for algebraic degree)
Either, there exists $\sigma \geqslant 0$ such that 
\begin{equation*}
\lim_{n\rightarrow +\infty} \int_G \varphi \left ( \dfrac{\log \deg(f) - n \ell}{\sqrt{n}} \right ) d\mu_n(f) = \int_{\R} \varphi(t) \  d \mathcal{N}_\sigma(t) 
\end{equation*}
for any function $\varphi \in C_b(\mathbb{R})$, or 
\begin{equation*}
\lim_{n\rightarrow +\infty} \int_G \varphi \left ( \dfrac{\log \deg(f) - n \ell}{\sqrt{n}} \right ) d\mu_n(f) =  \int_{\R} \varphi(t) \ d \mathcal{FN}_{\sigma}(t) 
\end{equation*}
for any $\varphi \in C_b(\mathbb{R})$.

\item[(ii)] (CLT for dynamical degree)
A similar limit law holds for the dynamical degree. 

There exists $\sigma \geqslant 0$ such that either  
\begin{equation*}
\lim_{n\rightarrow +\infty} \int_G \varphi \left ( \dfrac{\log \lambda_1(f) - n \ell}{\sqrt{n}} \right ) d\mu_n(f) =  \int_{\R} \varphi(t) \  d \mathcal{N}_\sigma(t) 
\end{equation*}
for any $\varphi \in C_b(\mathbb{R})$, or 
\begin{equation*}
\lim_{n\rightarrow +\infty} \int_G \varphi \left ( \dfrac{\log \lambda_1(f) - n \ell}{\sqrt{n}} \right ) d\mu_n(f) =  \int_{\R} \varphi(t) \ d \mathcal{FN}_{\sigma}(t) 
\end{equation*}
for any $\varphi \in C_b(\mathbb{R})$.

\noindent Finally:
\item[(iii)]
If the semigroup $G$ is non-elementary, then $\sigma > 0$ unless $G$ has arithmetic length spectrum. 
\end{enumerate}

\medskip

Let us remark that condition \eqref{E:moment2} is only needed if the semigroup generated by the support of $\mu$ is  parabolic, while the weaker condition \eqref{E:finite-log} is sufficient otherwise. A more refined version of this result, with a classification of all cases, formulas for $\ell$ and $\sigma$, as well as a characterization of the cases where $\sigma = 0$ will be given in \S~\ref{section_summary_elementary}.  

%Finally, in the non-elementary case, the fact that $\sigma = 0$ is related to the arithmeticity of the length 
%spectrum of $G$. 

When one considers groups of isometries of finitely dimensional hyperbolic spaces, there are no non-elementary subgroups which have arithmetic length spectrum 
(\cite{Dalbo}, \cite{kim-arith}). However, we show that such examples \emph{do} exist in the Cremona group: 
%there exist non-elementary semigroups of the Cremona group which have arithmetic length spectrum, and so that $\sigma = 0$ in the central limit theorem.

\begin{prop}  \label{P:exist-arith}
There exist non-elementary subgroups in the Cremona group which have arithmetic length spectrum. Consequently, there exist random walks supported 
on non-elementary semigroups for which $\sigma = 0$ in the central limit theorem.
\end{prop}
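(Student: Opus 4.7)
The plan is to exhibit, by a direct ping-pong construction, a non-elementary free subsemigroup $G \subset \mathrm{Bir}(\mathbb{P}^2)$ in which every element has algebraic degree a power of $2$, so that the length spectrum $\{\log \lambda_1(g) : g \in G\}$ is contained in $\log 2 \cdot \mathbb{N}$ and is in particular arithmetic.

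Concretely, I would start from the standard Cremona involution $\sigma : [x:y:z] \mapsto [yz:xz:xy]$, of degree $2$, whose indeterminacy locus consists of the three coordinate points and whose exceptional locus is the union of the three coordinate lines. For suitably generic linear automorphisms $L_1, L_2 \in \mathrm{PGL}_3(\mathbb{C})$ I would set $h_i := L_i \circ \sigma$, so that each $h_i$ is birational of algebraic degree $2$ and, for generic $L_i$, is algebraically stable in the sense of Diller--Favre with $\lambda_1(h_i) = 2$.

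The core step is to show that if $L_1, L_2$ are chosen outside a countable union of proper Zariski-closed subsets of $\mathrm{PGL}_3(\mathbb{C})^2$, then every finite word $g = h_{i_1} \circ \cdots \circ h_{i_n}$ satisfies $\deg(g) = 2^n$. One demands that for every such word no indeterminacy point of a prefix is sent by later factors onto the exceptional locus of a subsequent factor; equivalently, that the pullback of a generic line class $H$ by $g$ coincides, in the Picard--Manin space, with the straightforward composition $h_{i_1}^{*} \cdots h_{i_n}^{*} H = 2^n H$, with no cancellation. Each such condition is a non-empty open condition on $(L_1, L_2)$, and there are only countably many words, so a generic pair satisfies all of them simultaneously. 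Once this uniform degree formula is secured, for any $g$ of word-length $n$ one also has $\deg(g^k) = 2^{nk}$ for all $k$, hence $\lambda_1(g) = 2^n$, so $\log \lambda_1(g) \in \log 2 \cdot \mathbb{N}$, proving arithmeticity of the length spectrum. Non-elementarity follows from the same ping-pong setup: the $h_i$ are loxodromic on $\mathbb{H}^\infty$ with distinct pairs of endpoints at infinity, so they generate a non-elementary free semigroup. Taking $\mu := \tfrac{1}{2}(\delta_{h_1} + \delta_{h_2})$, the random walk satisfies $\log \deg(f_n) = n \log 2$ deterministically, which forces $\ell = \log 2$ and $\sigma = 0$ in Theorem~A.

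The main obstacle is precisely the uniform degree formula $\deg(h_{i_1} \circ \cdots \circ h_{i_n}) = 2^n$ for \emph{all} words, i.e., ruling out indeterminacy cancellations in arbitrarily long compositions. This is a countable family of genericity statements which requires some care; it can be formulated either in terms of algebraic stability of every cyclically reduced word, or geometrically as a Schottky-type separation between the attracting and repelling fixed points of $h_1$ and $h_2$ on the Gromov boundary of the Picard--Manin hyperbolic space $\mathbb{H}^\infty$.
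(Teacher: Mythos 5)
Your construction is a genuinely different route from the paper's. The paper takes explicit polynomial automorphisms of $\mathbb{C}^2$: with elementary maps $e_i(x,y) = (x + P_i(y), y)$ of degree $d$ and the cat map $a(x,y) = (2x+y, x+y)$, it sets $F = e_1 \circ a$ and $G = a \circ e_2 \circ a^2$. Friedland--Milnor's degree formula for alternating products of elementary and affine automorphisms then gives $\deg(w) = d^{k}$ for every cyclically reduced word $w$ in $F, G$, with no genericity assumption at all, so the length spectrum of $\langle F, G\rangle$ lies in $\log(d)\,\mathbb{N}$. Non-elementarity is checked not by ping-pong on $\mathbb{H}^\infty$ directly but via valuations: the centers in $\mathbb{P}^2$ of the limiting valuations of $F^{\pm n}_* \nu_0$ and $G^{\pm n}_* \nu_0$ are the four distinct points $[1:0:0]$, $[0:1:0]$, $[2:1:0]$, $[-1:2:0]$ at infinity, so the four boundary fixed points are pairwise distinct.

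Your genericity argument for $h_i = L_i \circ \sigma$ has a gap exactly where you flag it: the assertion that, for each word $w$, the locus of $(L_1, L_2)$ where $\deg(w) = 2^{|w|}$ is \emph{non-empty}. Openness is clear (degree drop is Zariski-closed in $(L_1,L_2)$), but non-emptiness is not, because the same $L_i$ occurs many times inside $w$, so the indeterminacy points and contracted curves along the composition all move \emph{together} in a coupled way as $(L_1, L_2)$ varies. A naive induction on $|w|$ does not close: appending a letter to $w$ perturbs the constraints coming from the earlier prefixes rather than merely adding a new independent one. The claim is very plausibly true and could likely be repaired --- for instance by first establishing stable degree for words whose letters are $n$ \emph{independent} generic quadratics and then showing the specialization $M_j = L_{i_j}$ remains generic, or by setting up a quantitative Schottky configuration on $\mathbb{H}^\infty$ that propagates contraction estimates along compositions --- but this is a real step, not a formality, and the paper's construction bypasses it entirely by making the degree arithmetic explicit. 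A secondary point: you construct a free subsemigroup, while the proposition asks for a subgroup; for the corollary about random walks a semigroup suffices, but for the statement as written you would also need the same degree control over words in $h_i^{\pm 1}$.
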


%when the support of the measure $\mu$ is elementary. 

\medskip
Our result is reminiscent of similar statements, proved in other contexts.
The central limit theorem is known for the norm of a random product of $n\times n$ matrices \cite{furstenberg_kesten_product_random_matrices,
guivarch_lepage_raugi,
guivarch_raugi_frontiere,
guivarch_raugi_product,
benoist_quint_central}, for the translation length and the escape rate for a random composition of isometries on a tree \cite{nagnibeda_woess} or more generally on a Gromov-hyperbolic space \cite{benoist_quint_hyperbolic,mathieu_sisto}, for quasimorphisms on a random product of elements in a countable hyperbolic group \cite{calegari-fujiwara, bjorklund-hartnick}, for the distance in the Teichm\"uller metric of a random product of mapping classes  \cite{horbez}.

\medskip
Let us now observe that the second case of Theorem A (with the folded normal law) actually occurs. 
Take a H\'enon map $h: \C^2 \to \C^2$ of degree $d$, of the form:
\begin{equation*}
h(x,y) = (y + P(x), x ),
\end{equation*}  
where $P(x) \in \C[x]$ is a polynomial of degree $d$ and let us take $\mu =  \frac{1}{2}\delta_h  +  \frac{1}{2}\delta_{h^{-1}}$, putting uniform mass on $h$ and $h^{-1}$. 
Since $\log \deg(h^{p}) = \log \lambda_1(h^p) = |p| \log(d)$ for all $p \in \mathbb{Z}$, the classical central limit theorem yields the convergence:
\begin{equation*}
\lim_{n\rightarrow +\infty }\dfrac{1}{\sqrt{n}} \int_G \phi(\log \deg(f) ) d\mu_n(f) = \lim_{n\rightarrow +\infty }\dfrac{1}{\sqrt{n}} \int_G \phi(\log \lambda_1(f) ) d\mu_n(f) =\int_{\R} \phi(|t|) e^{-t^2 / 2} dt, 
\end{equation*} 
for any bounded continuous function $\phi \in L^1(\R)$.
 In this situation, the logarithm of the degree of a random product does not converge to a normal law but to a folded normal law  and satisfies the second assertion of Theorem A. This example is an analogue in this setting of Furstenberg-Kesten's \cite[Example 2]{furstenberg_kesten_product_random_matrices} for products 
 of random matrices, where the folded normal law already appears.
Further concrete examples of the different asymptotic behaviours are given in Section \ref{S:examples}.
  
\medskip
Our proof exploits in a crucial way the relationship between birational maps and a suitable isometric action on an infinite dimensional Gromov-hyperbolic space or Hilbert space, developed by Cantat, Boucksom-Favre-Jonsson, 
Blanc-Cantat \cite{cantat_bir_surfaces,
 boucksom_favre_jonsson_deggrowth,blanc_cantat}. 
 The construction of this Gromov-hyperbolic space, denoted $\HH^\infty$, is of algebraic nature: it is obtained by 
 considering a subspace of divisors on the space of infinite blow-ups of $\Pg^2$ and taking its completion with respect to a norm induced by the intersection product. The Hodge index theorem guarantees that the intersection product  on $\HH^\infty$ defines a Lorentzian metric:
 \begin{equation*}
 d(\alpha, \beta) := \cosh^{-1}(\alpha \cdot \beta),
 \end{equation*}
 where $\alpha,\beta \in \HH^\infty$ and $(\alpha \cdot \beta)$ denotes the intersection product of $\alpha$ and $\beta$.
 One advantage in working on the space of divisors over all blow-ups of $\Pg^2$ is that the pullback action by a birational map $f$ becomes functorial. Namely, if $\alpha \in \HH^\infty$, and $f,g$ are birational maps, then:
 \begin{equation*}
 (f\circ g)^* \alpha = g^* f^* \alpha,
 \end{equation*}
 as if we were working with the action of an endomorphism on the N\'eron-Severi group of a surface. 
  Cantat exploited the fact that the pullback action on $\HH^\infty$ is an isometry to show  that the Tits alternative holds for the Cremona group \cite{cantat_bir_surfaces}. 
Thus, we obtain a representation $\rho_f :=  f^* $ from the Cremona group  to the group of isometries of $\HH^\infty$. 
Taking as $L$ the class of a line in $\Pg^2$, a random walk $(f_n)$ on the space of birational maps induces a sample path $\rho_{f_n}(L)$ in the hyperbolic space $\HH^\infty$ and we relate the degree to the distance on this space:
\begin{equation*}
\cosh d(\rho_{f_n}(L), L) = {\deg(f_n)}.
\end{equation*}

Denote by $G$ the semigroup of birational transformations generated by the support of $\mu$. 
According to the classification \cite{gromov_hyperbolic} 
of semigroups of isometries of a hyperbolic space, $\rho(G)$ is either non-elementary or elementary, which is further subdivided into elliptic, parabolic, focal, or lineal  (see Section \ref{S:examples}). 

In the non-elementary case, $\rho(G)$ contains two loxodromic elements with different axis; here, we import into our setting the known central limit theorems for the translation length and for the escape rate from \cite{bjorklund,mathieu_sisto, benoist_quint_hyperbolic,gouezel_analycity,horbez}. 

Otherwise, $\rho(G)$ either contains no loxodromic elements (elliptic or parabolic case) or every loxodromic element has a common fixed point on the (Gromov) boundary of $\HH^\infty$ (focal or lineal case). 
Here, we show that $G$ is particularly rigid and we conclude using some techniques from birational geometry by showing that the logarithm of a random product can be reduced to a random walk on the line or on the plane $\mathbb{R}^2$.

The possible limit distributions in the various cases are summarized in the following table. Note that the presence of a folded Gaussian 
implies that $G$ is lineal.

\bigskip

\begin{tabular}{|l|l|l|}
\hline
\textbf{Type of isometry group} & \textbf{Limit law for $\log \deg$} & \textbf{Limit law for $\log \lambda_1$} \\
\hline
elliptic & Gaussian (possibly trivial) & Gaussian (possibly trivial)  \\
\hline
parabolic & Gaussian (possibly trivial) & Gaussian (possibly trivial) \\
\hline
%focal &  Gaussian or Folded Gaussian & Gaussian or Folded Gaussian \\
%\hline
lineal & Gaussian or Folded Gaussian  & Gaussian or Folded Gaussian \\
\hline
non-elementary & Gaussian ($\ell > 0$) & Gaussian  ($\ell > 0$) \\
\hline
\end{tabular}

\section*{Acknowledgements }

We thank Jeffrey Diller, Romain Dujardin, Charles Favre, Junyi Xie for useful conversations, the first author's sister Nguyen-Thi Dang for providing us references and Mattias Jonsson for making us meet during our visit
to the University of Michigan. 
G. T. is partially supported by NSERC and the Alfred P. Sloan Foundation. 

\section{Rational maps, degrees and isometric actions}

\subsection{Topological, algebraic and dynamical  degrees}
  
Let $f : \Pg^2 \dashrightarrow \Pg^2$ be a rational map. 
The map $f$ can be expressed in homogeneous coordinates as:
\begin{equation*}
f([x: y : z]) = [P_0(x,y,z):P_1(x,y,z):P_2(x,y,z)],
\end{equation*}
where $P_0 , P_1,P_2 \in \C[x,y,z]$ are homogeneous polynomials of the same degree $d \in \N$ with no common factor.
We call $f$ \textbf{dominant} if the image of $\Pg^2$ is not contained in an algebraic curve. 

The integer $d$ is called the \textbf{degree} or the \textbf{algebraic degree} of the rational map $f$ and is denoted $\deg(f)$.  
 One can show (\cite{russakovskii_shiffman}) for any dominant rational map $f, g : \Pg^2 \dashrightarrow \Pg^2$ that:
 \begin{equation*}
 \deg(f\circ g) \leqslant \deg(f) \deg(g),
 \end{equation*}
so the sequence $(\deg(f^n))_{n \geqslant 1}$ for a given rational map $f$ is submultiplicative.  
We thus define the \emph{(first) dynamical degree} of $f$ as 
$$\lambda_1(f) := \lim_{n \to \infty} \deg(f^n)^{1/n}.$$

Recall that given a dominant rational map $f : \Pg^2 \dashrightarrow \Pg^2$, the \textbf{topological degree} of $f$, denoted $\dtop(f)$, is the number of preimages, counted with multiplicity, of a generic point of $\Pg^2$.
When the topological degree of $f$ is equal to $1$, one says that $f$ is \textbf{birational} and its inverse is a rational map which we denote by $f^{-1}$. In this paper, we will restrict to birational transformations of $\Pg^2$, which are often referred to as \textbf{Cremona transformations} of the plane.
Denote by $L$ the divisor on $\Pg^2$ given by the line at infinity. One can express the topological degree and the degree of $f$ by computing the following intersection products.
\begin{equation*}
\dtop(f) =1 =  (f^* L \cdot f^*L),
\end{equation*}
\begin{equation*}
\deg(f) = (f^* L \cdot L).
\end{equation*}

The dynamical degree and the topological degree are dynamical invariants and their properties are stated in the following result.

\begin{thm}   \label{T:ineq} 
\textup{(\cite{russakovskii_shiffman}, \cite[Theorem 1.1]{tuyen2}, \cite[Theorem 1]{dang2017degrees})} For any birational map $g : \Pg^2 \dashrightarrow \Pg^2$,  
$$\lambda_1(f) = \lambda_1(g \circ f \circ g^{-1}).$$ 
\end{thm}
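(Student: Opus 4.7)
The plan is to deduce conjugacy invariance directly from the submultiplicativity of the algebraic degree stated in \eqref{E:submult}, together with the identity that iteration commutes with conjugation as rational maps.

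First, I would establish that, as rational maps $\Pg^2 \dashrightarrow \Pg^2$,
$$(g \circ f \circ g^{-1})^n = g \circ f^n \circ g^{-1} \qquad \text{for every } n \geq 1.$$
Both sides agree on the Zariski open dense subset of $\Pg^2$ where all intermediate compositions are defined, and two dominant rational maps agreeing on a dense open set are equal. This is the only place where a little care with indeterminacy loci is required, but it reduces to standard facts about composition of dominant rational maps on surfaces.

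Applying the submultiplicative inequality \eqref{E:submult} twice to the right-hand side gives
$$\deg\bigl((g \circ f \circ g^{-1})^n\bigr) = \deg\bigl(g \circ f^n \circ g^{-1}\bigr) \leq \deg(g)\cdot \deg(f^n)\cdot \deg(g^{-1}).$$
Taking $n$-th roots and passing to the limit $n \to \infty$, the factors $\deg(g)^{1/n}$ and $\deg(g^{-1})^{1/n}$ both tend to $1$, so the definition \eqref{E:lambda-one} yields
$$\lambda_1(g \circ f \circ g^{-1}) \leq \lambda_1(f).$$
The reverse inequality follows by symmetry: writing $f = g^{-1} \circ (g \circ f \circ g^{-1}) \circ g$ and repeating the same estimate with the roles of $f$ and $g \circ f \circ g^{-1}$ swapped (and $g$ replaced by $g^{-1}$) gives $\lambda_1(f) \leq \lambda_1(g \circ f \circ g^{-1})$. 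Combining the two inequalities yields the equality.

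The main, indeed essentially only, obstacle is the iteration identity for rational maps; once that is in hand the rest is a two-line squeeze. It is precisely the failure of such identities for more subtle invariants (e.g.\ pullback actions on Néron-Severi that fail to be functorial under composition) that motivates the passage in later sections to the infinite-dimensional Gromov-hyperbolic space $\HH^\infty$, on which pullback \emph{is} functorial.
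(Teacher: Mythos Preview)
Your argument is correct and is the standard elementary proof of birational invariance of $\lambda_1$ on $\Pg^2$. Note, however, that the paper does not supply its own proof of this statement: Theorem~\ref{T:ineq} is simply quoted from the literature (Russakovskii--Shiffman for the submultiplicativity, and Dinh--Sibony/Truong/Dang for the invariance under birational conjugacy in more general settings). So there is nothing to compare against beyond the cited references, and your proof is exactly the kind of direct squeeze argument those references use in the surface case with a birational conjugating map.

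One small stylistic remark: your comment that the iteration identity is ``the only place where a little care with indeterminacy loci is required'' is right in spirit, but in fact no real care is needed here. Composition of dominant rational maps between irreducible varieties is well-defined and associative (equivalently, work at the level of function fields), so the telescoping $(g\circ f\circ g^{-1})^n = g\circ f^n\circ g^{-1}$ is immediate; there is no subtlety about indeterminacy points at this stage. The genuine subtleties with indeterminacy arise only when one tries to compare $\deg(f^n)$ with $\deg(f)^n$, or more generally to make pullback on N\'eron--Severi functorial --- and those are precisely what your closing paragraph alludes to.
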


\subsection{The construction of the hyperbolic space}

In this section, we recall the construction of the Picard-Manin space of divisors, following closely the presentation in \cite{blanc_cantat}.
We start with $X_0 = \mathbb{P}^2$. If $\pi: X \to X_0$ is a birational morphism, we say that $X$ is a \textbf{birational model of $X_0$}. When this happens, the morphism $\pi$ induces a pullback in the N\'eron-Severi group
$$ \pi^* : \NS(X_0) \to \NS(X). $$

Moreover, for any two birational models $ X,Y $ over $X_0$, there exists a third birational model $Z$ over both $X$ and $Y$. 
We thus define the Picard-Manin space as the inductive limit:
\begin{equation*}
\mathcal{Z} := \varinjlim{} \NS(X),
\end{equation*}
where $X$ describes all birational models of $X_0$. 
If $X$ is a blow-up of $X_0$ at one point, we denote by $E$ the exceptional divisor on $X$ and $\NS(X) \simeq \NS(X_0) \oplus \mathbb{Z} E$. 
If one takes an arbitrary sequence of blow-ups of $\Pg^2$, we obtain finitely many exceptional divisors which are all inside $\mathcal{Z}$. 
The Picard-Manin space can be described as:
\begin{equation*}
\mathcal{Z} = \NS(\Pg^2) \oplus \bigoplus \mathbb{Z} E_i \simeq \mathbb{Z} L  \oplus \bigoplus \mathbb{Z} E_i,
\end{equation*}
 where $E_i$ describes all the exceptional divisors on a birational model of $\Pg^2$ and where $L$ denotes the class of a line in $\Pg^2$.
 
 The intersection product on each birational model of $\Pg^2$ induces a scalar product on $\mathcal{Z}$, denoted $(\alpha \cdot \beta )$, and a norm on $\mathcal{Z} \otimes \R$. We denote by $\overline{ \mathcal{Z}}$ the completion of $\mathcal{Z}$ with respect to this norm.
Observe that the Hodge index theorem on each birational model of $\Pg^2$ shows that the metric induced by the intersection product is hyperbolic; as a result, the space $\overline{\mathcal{Z}}$ endowed with the metric induced by the intersection product has the structure of an infinite-dimensional hyperbolic space. For more details on this construction, we shall refer to \cite{cantat_bir_surfaces}. 

\begin{defi} The hyperbolic space $\HH^\infty$ is the set
\begin{equation*}
\HH^\infty: = \left \lbrace \alpha \in \overline{\mathcal{Z}} \ : \ (\alpha \cdot \alpha ) = 1 , (\alpha \cdot L) > 0 \right \rbrace. 
\end{equation*}
It is endowed with a hyperbolic metric $d: \HH^\infty \times \HH^\infty \to \R^+$  given by the formula:
\begin{equation*}
d(\alpha, \beta) := \cosh^{-1} (\alpha \cdot \beta),
\end{equation*}
for any $\alpha, \beta \in \HH^\infty$.
Its boundary, denoted $\partial \HH^\infty$, is the set
\begin{equation*}
\partial \HH^\infty := \left \lbrace  \alpha \in \overline{\mathcal{Z}} \ : \ (\alpha\cdot \alpha)=0, (\alpha \cdot L)>0 \right \rbrace.
\end{equation*}
\end{defi}

This space corresponds to the choice of a ``positive" hyperboloid in the vector space $\overline{\mathcal{Z}}$. 
We will restrict our action to a smaller subset of $\HH^\infty \cup \partial \HH^\infty$, namely the nef cone. 
Recall that a class is \emph{nef} if it intersects non-negatively any curve class and that a nef class is also \emph{big} if its self-intersection is positive.

\begin{defi} The nef locus $\nefh$ is the subset of nef classes in $\HH^\infty$ and we denote by $\partial \nefh$  the set of nef classes on the boundary  $ \partial \HH^\infty$.
\end{defi}

\subsection{Isometric action of birational maps on the hyperbolic Picard-Manin space}

Let $f : \Pg^2 \dashrightarrow \Pg^2$ be a birational map. 
Its graph $\Gamma_f$ in $\Pg^2 \times \Pg^2$ is a natural birational model of $\Pg^2$ and the maps $\pi_1,\pi_2$ induced by the projection onto the first and second factor, respectively, are regular. 
If $\alpha $ is a divisor in $\Pg^2$, then we can take its pullback by $\pi_2$, denoted $f^* \alpha$. More generally, we can do the same if $\alpha$ is a class in a birational model $X$ of $\Pg^2$ by pulling back on the corresponding graph. 
The latter definition is compatible with the inductive definition and induces a continuous pullback map $f^* : \HH^\infty \to \HH^\infty$. 

We now define a contravariant action by Cremona transformations on $\HH^\infty$, namely for any $\alpha \in \HH^\infty$ and any  birational map $f$, the element 
$\rho_f(\alpha)$ is given by the formula
\begin{equation*}
\rho_f(\alpha) :=  f^*\alpha.
\end{equation*}

Note that since $(f\circ g)^* = g^* \circ f^*$, this action  reverses the order.
Using the fact that $(f^*\alpha \cdot f^* \beta) =  (\alpha \cdot \beta)$ for all $\alpha ,\beta\in \HH^\infty$, one verifies that the above action induces an isometry of $(\HH^\infty,d)$ and the inverse is induced by $(f^{-1})^*$. 

As a consequence, if the associated isometry is loxodromic on $\HH^\infty$ then we relate the dynamical degree of $f$ with the translation distance as follows.

\begin{lem} \label{lem_distance_log}
For any birational map $f$ on $\Pg^2$, we have 
$$\log\left (f^*L \cdot L \right )  \leqslant d(\rho_f(L), L) \leqslant\log\left ( 2 {(f^*L \cdot L)}  \right).$$ 
\end{lem}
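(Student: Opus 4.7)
The plan is to unpack both sides directly using the explicit formula for the hyperbolic distance, and then bound $\cosh^{-1}$ between two logarithms by elementary estimates. First, I would write $d(\rho_f(L),L)=\cosh^{-1}(f^*L\cdot L)$ using the definition of the metric on $\HH^\infty$, and recall that the inverse hyperbolic cosine admits the closed form
\begin{equation*}
\cosh^{-1}(x)=\log\!\bigl(x+\sqrt{x^2-1}\bigr)\qquad\text{for }x\geqslant 1.
\end{equation*}
Applying this with $x:=f^*L\cdot L=\deg(f)\geqslant 1$ reduces the lemma to the two elementary inequalities
\begin{equation*}
\log(x)\ \leqslant\ \log\!\bigl(x+\sqrt{x^2-1}\bigr)\ \leqslant\ \log(2x),
\end{equation*}
which both hold because $0\leqslant\sqrt{x^2-1}\leqslant x$ for $x\geqslant 1$, and $\log$ is monotone.

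The only thing worth checking carefully is that $f^*L\cdot L\geqslant 1$, so that $\cosh^{-1}$ is well-defined and non-negative. This is immediate: since $f$ is birational, the class $f^*L$ is effective of degree $\deg(f)\in\N_{\geqslant 1}$, so $f^*L\cdot L=\deg(f)\geqslant 1$. Consequently $\rho_f(L)\in\HH^\infty$ really does lie at hyperbolic distance $\cosh^{-1}(\deg f)$ from $L$, and the two-sided bound above is exactly the conclusion.

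There is no genuine obstacle here; the lemma is essentially a repackaging of the relation $\deg(f)=f^*L\cdot L=\cosh d(\rho_f(L),L)$ already recorded in the surrounding discussion, combined with the trivial sandwich $x\leqslant x+\sqrt{x^2-1}\leqslant 2x$. The only mild subtlety is the choice of the factor $2$ in the upper bound: one could replace it by any constant $c>1$ at the cost of restricting to $\deg(f)$ large enough (since $\sqrt{x^2-1}/x\to 1$), but the uniform constant $2$ is what keeps the inequality valid for every birational $f$, including the identity where $\deg(f)=1$ and both sides of the stated inequality reduce to $0\leqslant 0\leqslant\log 2$.
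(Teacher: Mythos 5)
Your proof is correct and takes essentially the same approach as the paper: both start from $\cosh d(\rho_f(L),L) = (f^*L\cdot L)$ and conclude by an elementary two-sided bound. The paper uses $\tfrac{1}{2}e^x \leqslant \cosh x \leqslant e^x$ and takes logarithms, whereas you use the closed form $\cosh^{-1}(x) = \log\!\bigl(x+\sqrt{x^2-1}\bigr)$ together with $x \leqslant x+\sqrt{x^2-1} \leqslant 2x$; these are equivalent repackagings of the same estimate.
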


\begin{proof}
By definition of the hyperbolic metric, 
$$\cosh d(\rho_f(L), L) = {(f^*L \cdot L)} $$
hence, since $\frac{1}{2} e^x \leqslant \cosh x \leqslant e^x$, 
$$ \frac{e^{d(\rho_f(L), L)}}{2} \leqslant  {(f^*L \cdot L)} \leqslant e^{d(\rho_f(L), L)}$$
which immediately yields the claim.
\end{proof}

Observe that when the action of $f$ is loxodromic, then the two invariant classes on the boundary $\partial \HH^\infty$ are also in $\partial\nefh$.
We shall use frequently the following observation. 

\begin{lem} \label{trick_lemma} Let $G$ be a semigroup of  birational maps of $\Pg^2$. Suppose that there exists an element $\alpha \in \HH^\infty \cup \partial \HH^\infty$ which is an eigenvector for every element of $G$. Then the map $\pi : G \to (\R, +)$
\begin{equation*}
 \pi(f) := \log (f^* \alpha \cdot L)  - \log(\alpha\cdot L)
\end{equation*}
is a morphism of semigroups. 
\end{lem}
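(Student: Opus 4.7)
The proof is essentially a direct computation once one unpacks what it means for $\alpha$ to be a common eigenvector. My plan has three short steps.

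First, I would extract the eigenvalue as a function on $G$. By hypothesis, for each $f \in G$ there exists a scalar $\lambda(f) \in \R$ with $f^* \alpha = \lambda(f) \alpha$. I would argue that $\lambda(f) > 0$: since $f^*$ restricts to an isometry of $\HH^\infty \cup \partial \HH^\infty$, the class $f^*\alpha$ lies again in this set, so $(f^*\alpha \cdot L) > 0$. Combined with $(\alpha \cdot L) > 0$ and the equality $(f^*\alpha \cdot L) = \lambda(f)(\alpha \cdot L)$, this forces $\lambda(f) > 0$. (Incidentally, if $\alpha \in \HH^\infty$ then the isometry property $(f^*\alpha \cdot f^*\alpha) = (\alpha \cdot \alpha) = 1$ pins down $\lambda(f)=1$; the interesting case is therefore $\alpha \in \partial \HH^\infty$, where $\lambda(f)$ can be any positive real.)

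Second, I would rewrite $\pi$ in terms of this eigenvalue. Taking intersection with $L$ in the identity $f^*\alpha = \lambda(f)\alpha$ yields $(f^*\alpha \cdot L) = \lambda(f)(\alpha \cdot L)$, and since $(\alpha \cdot L) > 0$ we may take logarithms to obtain
\begin{equation*}
\pi(f) = \log(f^*\alpha \cdot L) - \log(\alpha \cdot L) = \log \lambda(f).
\end{equation*}

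Third, I would use the functoriality of the pullback action on the Picard--Manin space, namely $(f \circ g)^* = g^* \circ f^*$ as established earlier in the excerpt. Applying both sides to $\alpha$ gives
\begin{equation*}
(f \circ g)^*\alpha = g^*(\lambda(f)\alpha) = \lambda(f)\lambda(g)\alpha,
\end{equation*}
so $\lambda(f \circ g) = \lambda(f)\lambda(g)$. Taking logarithms yields $\pi(f \circ g) = \pi(f) + \pi(g)$, which is exactly the statement that $\pi : G \to (\R,+)$ is a semigroup homomorphism. There is no real obstacle; the only subtle point is checking positivity of $\lambda(f)$, which is why the hypothesis that $\alpha$ lies in the \emph{positive} hyperboloid (or its positive boundary) is essential.
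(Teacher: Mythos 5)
The paper states this lemma without proof, treating it as an immediate consequence of the functoriality $(f\circ g)^* = g^*\circ f^*$ recorded a few lines earlier; your argument correctly supplies exactly that verification. The positivity check on $\lambda(f)$ is the right thing to pause on (it is what makes the logarithm well-defined), and your observation that $\alpha\in\HH^\infty$ forces $\lambda(f)=1$ is also accurate, though in the paper's applications $\alpha$ always lies on the boundary.
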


In the following sections, we will use many  times the following result. 
\begin{lem} \label{lem_big} If $\alpha \in \HH^\infty$ is  big and nef then there exists a constant $C$ such that for all $f\in G$, one has
\begin{equation} \label{eq_deg_fix_point}
|\log (f^* L \cdot L) - \log(f^* \alpha \cdot L)| \leqslant C.
\end{equation}
\end{lem}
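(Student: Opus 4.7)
The plan is to reduce the claimed inequality to an assertion about distances in the hyperbolic space $\HH^\infty$, where everything is controlled by the fact that $\rho_f$ acts by isometries. Since $\alpha \in \HH^\infty$ we have $(\alpha\cdot\alpha)=1$ and $(\alpha\cdot L)>0$, so the quantity
$$D := d(\alpha,L) = \cosh^{-1}(\alpha\cdot L)$$
is a finite constant depending only on the fixed class $\alpha$. Note that the ``big and nef'' hypothesis is used to put $\alpha$ into $\HH^\infty$ (big $\Leftrightarrow$ positive self-intersection, and nef guarantees $(\alpha\cdot L)>0$), while no further properties of $\alpha$ will be needed.

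The first step is to apply the triangle inequality in $\HH^\infty$. Since $\rho_f$ is an isometry of $\HH^\infty$, we have $d(\rho_f(\alpha),\rho_f(L)) = d(\alpha,L) = D$, so
$$\bigl|d(\rho_f(\alpha),L) - d(\rho_f(L),L)\bigr| \leqslant d(\rho_f(\alpha),\rho_f(L)) = D.$$
Here it is important that $\rho_f(\alpha)$ stays in $\HH^\infty$; this holds because $f^*$ preserves the Lorentzian self-pairing, and the positive sheet is preserved by continuity and connectedness (using that $\rho_f(L) = f^*L\in\HH^\infty$ since $(f^*L\cdot L)=\deg(f)\geqslant 1$).

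The second step is to translate this distance bound into a bound on logarithms of intersection pairings, exactly as in the proof of Lemma \ref{lem_distance_log}. Since $(\rho_f(\alpha) \cdot \rho_f(\alpha))=1=(L\cdot L)$, the defining identity $\cosh d(\beta,L) = (\beta\cdot L)$ for $\beta\in\HH^\infty$, combined with $\frac{1}{2}e^x\leqslant \cosh x \leqslant e^x$, yields
$$\log(f^*\alpha\cdot L) \leqslant d(\rho_f(\alpha),L) \leqslant \log\bigl(2(f^*\alpha\cdot L)\bigr),$$
and the analogous chain for $f^*L$. Plugging both chains into the triangle inequality from the first step gives
$$\bigl|\log(f^*\alpha\cdot L) - \log(f^*L\cdot L)\bigr| \leqslant D + \log 2,$$
so the lemma follows with $C := \cosh^{-1}(\alpha\cdot L) + \log 2$, which depends only on $\alpha$ and not on $f\in G$. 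No real obstacle is expected; the entire argument is a one-line consequence of the isometric action on $\HH^\infty$ together with the elementary comparison $\cosh x \asymp e^x$ already exploited in Lemma \ref{lem_distance_log}.
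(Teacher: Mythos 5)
Your argument is correct, and it takes a genuinely different route than the paper's. The paper proves the lemma via Siu's inequality: since both $\alpha$ and $L$ are big and nef, it compares the classes $f^*\alpha$ and $f^*L$ up to explicit effective errors, then intersects with $L$. Your proof instead stays entirely inside the hyperbolic-geometry framework already set up in the paper: you observe that $\rho_f$ is an isometry of $\HH^\infty$ fixing the intersection form, so $d(\rho_f(\alpha),\rho_f(L)) = d(\alpha,L)$ is a constant $D$ independent of $f$, and the triangle inequality plus the elementary comparison $\tfrac12 e^x \leqslant \cosh x \leqslant e^x$ (exactly as in Lemma~\ref{lem_distance_log}) yields the bound with $C = \cosh^{-1}(\alpha\cdot L) + \log 2$. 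Your proof is more elementary and gives an explicit constant; the paper's proof has the virtue of applying verbatim to big and nef classes that are not normalized to self-intersection one (and the paper does in fact later invoke the lemma with such classes, e.g. $\theta_+ + \theta_-$), but that gap is closed by a trivial rescaling. One small step you should spell out more carefully is the claim that $\rho_f(\alpha) \in \HH^\infty$: your connectedness argument (the positive sheet of $\{(\beta\cdot\beta)=1\}$ is preserved because $\rho_f(L)\in\HH^\infty$) is fine, and alternatively one can invoke that $f^*\alpha$ is nef with positive self-intersection and apply the Hodge index theorem to conclude $(f^*\alpha\cdot L) > 0$.
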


\begin{proof}
Since $\alpha$ and $L$ are big and nef, Siu's inequality (see \cite{trapani},\cite[Theorem 2.2.13]{lazarsfeld_positivity_1}) yields:
\begin{equation*}
\dfrac{(L^2)}{2(\alpha \cdot L)} f^* \alpha \leqslant  f^* L \leqslant 2\dfrac{(\alpha \cdot L)}{(\alpha^2)} f^* \alpha,
\end{equation*}
where $\alpha \leqslant \beta $ means that the difference $\beta - \alpha$ lies in the closure of the cone generated by effective curves.
Intersecting with the nef class $L$ thus yields:
\begin{equation*}
\dfrac{(L^2)(f^*\alpha \cdot L)}{2(\alpha \cdot L)} \leqslant \deg(f) = (f^* L \cdot L) \leqslant 2 \dfrac{(\alpha \cdot L)(f^*\alpha \cdot L)}{(\alpha^2)}.
\end{equation*}
which implies the claim.
\end{proof}

\subsection{Classification of semigroups of isometries} \label{S:examples}

By the classification of semigroups of isometries of hyperbolic spaces (see \cite[Theorem 6.2.3 and Proposition 6.2.14]{das_simmons_urbanski}), a semigroup $G$ acting by isometry on a hyperbolic space $X$ satisfies one of the following properties:

\begin{enumerate}
\item[(i)] $G$ is \emph{elliptic}, i.e. there exists a class $\alpha \in X$ globally fixed by $G$.
\item[(ii)] $G$ is \emph{parabolic}, i.e. there exists a class $\alpha \in \partial X$ globally fixed by $G$ and every element of $G$ is parabolic.
\item[(iii)] $G$ is \emph{focal}, i.e. it globally fixes a class $\alpha \in \partial X$ and contains a hyperbolic element.
\item[(iv)] $G$ is \emph{non-elementary}, i.e. there exists two hyperbolic elements whose fixed sets at infinity do not intersect.
\item[(v)] $G$ is \emph{lineal}, i.e. it contains a hyperbolic element and any other hyperbolic element fixes the same points at infinity.
\end{enumerate} 

We call a semigroup $G$ \emph{elementary} if it satisfies condition $(i), (ii), (iii)$ or $(v)$ in the above characterization. 
In the situation where $G$ is a semigroup of Cremona transformations, the above classification yields.

\begin{prop} Let $G$  be a semigroup of Cremona transformations.  Then one of the following properties hold.
\begin{enumerate}
\item[(i)] $G$ is \emph{elliptic}, i.e. there exists a class $\alpha \in \HH^\infty$ globally fixed by $G$.
\item[(ii)] $G$ is \emph{parabolic}, i.e. there exists a class $\alpha \in \partial\nefh$ globally fixed by $G$ and every element of $G$ is parabolic.
%\item[(iii)] $G$ is \emph{focal}, i.e. it globally fixes a class $\alpha \in \partial \nefh$ and contains a hyperbolic element.
\item[(iii)] $G$ is \emph{non-elementary}, i.e. there exists two hyperbolic elements whose fixed sets at infinity do not intersect.
\item[(iv)] $G$ is \emph{lineal}, i.e. it contains a hyperbolic element and any other hyperbolic element fixes the same points at infinity.
\end{enumerate} 
\end{prop}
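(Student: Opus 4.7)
The plan is to derive the four cases for Cremona semigroups from the abstract five-case classification recalled just above, by accomplishing two reductions: ruling out the focal case entirely, and showing that in the parabolic case the globally fixed boundary class automatically lies in $\partial\nefh$ rather than merely in $\partial\HH^\infty$.

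For the parabolic refinement, suppose $G$ is parabolic and fixes $\alpha \in \partial\HH^\infty$. Since every element of $G$ is parabolic, picking any $f \in G$ the iterates $(f^{n*}L)$ converge to $\alpha$ in the closure $\HH^\infty \cup \partial\HH^\infty$. Each $f^{n*}L$ is nef: pulling $L$ back via the second projection $\pi_2$ from the graph $\Gamma_{f^n}$ produces a nef class (the morphism pullback of the nef class $L$), and its image in the Picard-Manin space is still nef. Together with the identity $(f^{n*}L \cdot f^{n*}L)=1$ valid for any birational $f^n$, this places each $f^{n*}L$ on the nef hyperboloid $\nefh$. Since nef classes are defined by the closed conditions of non-negative intersection against effective classes, the subset $\nefh \cup \partial\nefh$ is closed in $\HH^\infty \cup \partial\HH^\infty$, so the limit $\alpha$ lies in $\partial\nefh$.

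To rule out the focal case, assume by contradiction that $G$ fixes some $\alpha_0 \in \partial\HH^\infty$ and contains a loxodromic element $f$ with attracting and repelling boundary fixed points $\alpha^+$ and $\alpha^-$. A loxodromic isometry has exactly two boundary fixed points, so $\alpha_0 \in \{\alpha^+, \alpha^-\}$; say $\alpha_0 = \alpha^+$. The focal hypothesis then furnishes some $g \in G$ with $g^*\alpha^- \neq \alpha^-$. At this point I would invoke the rigidity result of Cantat \cite{cantat_bir_surfaces} on stabilizers of boundary fixed points in $\Bir(\Pg^2)$: any element of $\Bir(\Pg^2)$ that preserves the attracting fixed point of a loxodromic Cremona map also preserves the repelling fixed point, so $g^*$ must in fact permute $\alpha^+$ and $\alpha^-$, contradicting $g^*\alpha^- \neq \alpha^-$. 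The internal mechanism is that the pair $(\alpha^+, \alpha^-)$ spans a distinguished two-dimensional Lorentzian plane in $\overline{\mathcal{Z}}$ on which $f^*$ acts diagonally with eigenvalues $\lambda_1(f)^{\pm 1}$, and integrality of the action on the Picard-Manin lattice, combined with the Hodge index theorem giving negative definiteness on the orthogonal complement, forces every isometry fixing $\alpha^+$ to preserve this plane.

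The main obstacle is this last rigidity step. In the abstract theory of semigroups of isometries of Gromov-hyperbolic spaces, focal semigroups really do exist (for instance Baumslag-Solitar subgroups acting on trees, or affine subgroups of $\mathrm{Isom}(\HH^2)$ fixing a single boundary point); ruling them out in $\Bir(\Pg^2)$ is therefore a genuinely algebraic-geometric input, not a soft hyperbolic geometry argument, and relies on the integral structure of the Picard-Manin lattice. The parabolic refinement is by contrast a routine closedness argument once one notes that the orbit of $L$ remains in $\nefh$.
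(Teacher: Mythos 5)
The paper's proof of this proposition is a one-line citation: by Urech's \cite[Lemma~7.3]{urech2018subgroups}, no subsemigroup of $\Bir(\Pg^2)$ can induce a focal group of isometries on $\HH^\infty$, so the five abstract cases reduce to four. Your parabolic refinement (that the fixed boundary class is automatically in $\partial\nefh$ because each $f^{n*}L$ is nef, nef classes are closed, and $f^{n*}L \to \alpha$) is a reasonable and standard observation, and is not something the paper bothers to spell out.

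The genuine gap is exactly where you flagged it: the focal exclusion. You correctly observe that ruling out the focal case cannot be a soft hyperbolic-geometry argument (focal semigroups exist abstractly), and you correctly reduce the problem to showing that any Cremona element $g$ with $g^*\alpha^+ = \alpha^+$ must preserve the pair $\{\alpha^+,\alpha^-\}$. But the ``internal mechanism'' you offer does not establish this. An isometry of $\overline{\mathcal{Z}}$ that fixes the ray $\R_{>0}\alpha^+$ has no a priori reason to stabilize the two-dimensional Lorentzian plane $\textup{Span}(\alpha^+,\alpha^-)$: in the Lorentzian model, the stabilizer of a null ray is a large parabolic-type subgroup acting transitively on the other null rays, so integrality of $g^*$ on the Picard--Manin lattice does not by itself pin down $g^*\alpha^-$. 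Something about the specific algebraic geometry of $\alpha^+$ (it is a nef eigenclass of a loxodromic Cremona map, hence an ``irrational Cantat class'') must be invoked, and you do not actually invoke it; you gesture at ``the rigidity result of Cantat'' without stating or using it. The paper sidesteps all of this by quoting Urech's lemma, which supplies precisely the missing algebro-geometric input. As written, your proof proposal does not close the focal case and therefore does not prove the proposition.
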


\begin{proof}
By \cite[Lemma 7.3]{urech2018subgroups}, $G$ cannot induce a focal subgroup of isometries on $\HH^\infty$. So we are left with the four remaining cases of the classification of isometries.
\end{proof}

\bigskip
We now give some concrete examples of subgroups in each of these classes and discuss the central limit theorem. 

\begin{example} If $G \subset \PGL_2(\C)$ is a discrete subgroup acting linearly on $\Pg^2$, then the semigroup induced by $G$ on $\HH^\infty$ is elliptic. In this case, the degrees and dynamical degrees are always $1$ and the sequence $\log \deg(f_n)$ is the constant random variable equal to zero.
\end{example}

%We then give an example of elliptic semigroup containing non-invertible elements. 
\begin{example} If $G$ is a family of non-trivial Jonqui\`eres transformations, i.e. of the form:
\begin{equation*}
(x,y) \mapsto  \left (ax + b , \dfrac{\alpha(x) y + \beta(x)}{\gamma(x) y + \delta(x)} \right ),
\end{equation*}
where $a \in \C^* , b\in \C$ and $\alpha,\beta, \gamma, \delta \in \C(x)$ are non-constant rational functions on $x$ and $\alpha \delta - \beta \gamma$ is a non-zero function. 
Then the subgroup $G$ induces a parabolic action on $\HH^\infty$. 
\end{example}

\begin{example} Take $h$ a H\'enon map, i.e. of the form:
\begin{equation*}
h: (x,y) \mapsto (y + P(x), x ),
\end{equation*}
where $P(x) \in \C[x]$ is a polynomial of degree $d\geqslant 2$. 
Consider the measure $\mu = \frac{1}{2} \delta_h + \frac{1}{2} \delta_{h^{-1}}$. 
The subgroup generated by $h$ and its inverse induces a lineal subgroup of isometries on $\HH^\infty$.
Since $\deg(h^p) = d^{|p|}$ for all $p\in\mathbb{Z}$, we have that $\frac{1}{\sqrt{n}} \log \deg (f_n)$ follows a folded normal law.
\end{example}

\subsection{Characterization of semigroups having a global fixed point on the boundary}
In this section, we study the semigroup of birational transformations whose action on the Picard-Manin space has a global fixed point on the boundary.
In many cases, we shall use the following result. 

Recall that the \emph{translation length} of an isometry $f$ of a hyperbolic metric space $(X, d)$ is 
$$\tau(f) := \lim_{n \to \infty} \frac{d(o, f^n o)}{n},$$
where $o \in X$ is any base point.  Moreover the isometry $f$ is \emph{loxodromic} if $\tau(f) > 0$. 

\begin{prop} \label{prop_kernel} Take a semigroup $G$ and suppose that there exists a class $\alpha \in \partial \nefh$ which is fixed by $G$ and such that $f^* \alpha = \lambda(f) \alpha$ for $\lambda(f)\in \R^*$. Then the following properties are equivalent.
\begin{enumerate}
\item[(i)]  One has $\lambda(f) = 1$, 
\item[(ii)] The action of $f$ on $\HH^\infty$ is not loxodromic.
\item[(iii)] One has $\lambda_1(f) = 1$.
\end{enumerate}
\end{prop}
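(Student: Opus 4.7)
My plan is to reduce the result to the Lorentzian structure of the isometric action on $\HH^\infty$, together with the classical identification of the translation length of $f^*$ with $\log \lambda_1(f)$, which immediately gives (ii) $\Leftrightarrow$ (iii). The substance is therefore the equivalence (i) $\Leftrightarrow$ (ii). I would begin with the observation that $\lambda(f) > 0$: since $f^*\alpha \in \partial\nefh$, one has $\lambda(f)(\alpha\cdot L) = (f^*\alpha\cdot L) > 0$, so the logarithm $\log \lambda(f)$ is well-defined.

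For (ii) $\Rightarrow$ (i), I would exploit the $1$-Lipschitz property of the Busemann function based at $\alpha$, which in Lorentzian language is the two-sided inequality
\[
(\alpha\cdot L)\, e^{-d(v,L)} \;\leq\; (\alpha\cdot v) \;\leq\; (\alpha\cdot L)\, e^{d(v,L)}
\]
for every $v\in\HH^\infty$ (this can be verified directly by decomposing $\alpha$ in the $2$-plane spanned by $L$ and $v$ and applying Cauchy--Schwarz in the negative-definite orthogonal complement of $L$). Apply this to $v = (f^*)^n L$ and use the identity
\[
(\alpha\cdot (f^*)^n L) \;=\; ((f^*)^{-n}\alpha \cdot L) \;=\; \lambda(f)^{-n}(\alpha \cdot L),
\]
which follows from $f^*$ being an isometry and from $f^*\alpha = \lambda(f)\alpha$. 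Rearranging gives $d((f^*)^n L, L) \geq n\,|\log\lambda(f)|$, hence $\tau(f^*) \geq |\log \lambda(f)|$. If $f^*$ is not loxodromic then $\tau(f^*) = 0$, so $\lambda(f) = 1$.

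For (i) $\Rightarrow$ (ii), I would argue by contradiction from the linear algebra of loxodromic Lorentzian isometries. Assume $f^*$ were loxodromic, with fixed boundary rays spanned by null vectors $\beta_{\pm}$ and expansion factor $\mu > 1$. Then the $2$-plane $P = \mathrm{span}(\beta_+, \beta_-)$ has intersection form of signature $(1,1)$, its orthogonal complement $P^\perp \subset \overline{\mathcal{Z}}$ is negative definite, and $f^*$ acts on $P$ with eigenvalues $\mu,\mu^{-1}$ and on $P^\perp$ as a Hilbert space isometry. Decomposing $\alpha = a\beta_+ + b\beta_- + \gamma$ with $\gamma \in P^\perp$ and enforcing $f^*\alpha = \alpha$ (i.e.\ $\lambda(f)=1$) forces $a = b = 0$ (since $\mu \neq 1$) and $f^*\gamma = \gamma$; but then the null vector $\alpha = \gamma$ lies in the negative-definite subspace $P^\perp$, so $\alpha = 0$, contradicting $(\alpha\cdot L) > 0$.

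The main subtlety is making the orthogonal decomposition $\overline{\mathcal{Z}} = P \oplus P^\perp$ and the associated spectral picture rigorous in the infinite-dimensional completion: one must verify that a loxodromic isometry of $\HH^\infty$ still preserves the $2$-plane spanned by its two boundary fixed points and that the orthogonal complement retains its negative-definite Hilbert space structure. Both facts are well documented in the Picard--Manin literature and follow from the finite-dimensional models on each blow-up together with Cantat's description of $\overline{\mathcal{Z}}$; once invoked, the two implications above close the argument.
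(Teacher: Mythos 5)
Your proof is correct, but it takes a genuinely different route from the paper's for both nontrivial implications, and the comparison is instructive. For (ii) $\Rightarrow$ (i), you rely on the $1$-Lipschitz estimate for the Busemann function of $\alpha$ in the Lorentzian model, $(\alpha\cdot L)\,e^{-d(v,L)} \leq (\alpha\cdot v) \leq (\alpha\cdot L)\,e^{d(v,L)}$, to derive directly that $\tau(f^*) \geq |\log\lambda(f)|$; the paper instead pushes through Siu's inequality (its Lemma \ref{lem_lambda_deg}) to get $2\deg(f^n) \geq \max(\lambda(f)^n, \lambda(f)^{-n})$ and then passes to the translation length via degrees. For (i) $\Rightarrow$ (ii), you use the orthogonal decomposition $\overline{\mathcal{Z}} = P \oplus P^\perp$ adapted to the two boundary fixed rays of a putative loxodromic $f^*$ and conclude that a $\lambda(f)=1$ eigenvector would be a null vector trapped in the negative-definite $P^\perp$, hence zero; the paper instead takes the second fixed nef class $\beta$, invokes the Hodge index theorem to get $(\alpha\cdot\beta)\neq 0$, derives $\mu(f)=\lambda(f)^{-1}$ by the projection formula, and then applies its Lemma \ref{lem_big} (again Siu) to see that $\deg(f^n)$ must diverge, forcing $\lambda(f)\neq 1$. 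Your argument is purely Lorentzian and sidesteps Siu's inequality entirely; in exchange it invokes the spectral/orthogonal structure of loxodromic isometries of the Lorentzian Hilbert space $\overline{\mathcal{Z}}$ (the completeness of $P^\perp$, negative-definiteness, invariance under $f^*$), which you correctly flag as the main point needing care. The paper's route, while less geometrically transparent, stays entirely within the toolkit already set up in its Lemmas \ref{lem_big} and \ref{lem_lambda_deg} and so is more self-contained within the article. The (ii) $\Leftrightarrow$ (iii) step is identical in both treatments.
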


This proposition yields the following corollary:

\begin{cor} \label{cor_kernel} Take a semigroup $G$ and suppose that there exists a class $\alpha \in \partial \nefh$ which is fixed by $G$ and that $f^* \alpha = \lambda(f) \alpha$ for $\lambda(f)\in \R^*$. 
Then 
\begin{equation} \label{E:lambda1}
\lambda_1(f) = \max(\lambda(f), \lambda(f)^{-1})
\end{equation}
for all $f\in G$.
\end{cor}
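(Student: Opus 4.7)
The plan is to distinguish the case $\lambda(f)=1$ from $\lambda(f)\neq 1$ via Proposition \ref{prop_kernel}, and, in the latter case, to exploit the $2$-dimensional Lorentzian geometry of the invariant subspace spanned by the two boundary fixed points of $f^*$ on $\HH^\infty$.

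If $\lambda(f) = 1$, Proposition \ref{prop_kernel} yields $\lambda_1(f) = 1 = \max(\lambda(f), \lambda(f)^{-1})$, and there is nothing to prove. Suppose instead $\lambda(f) \neq 1$. Note that $\lambda(f) > 0$ because $f^*$ preserves the forward light cone in $\overline{\mathcal{Z}}$. Proposition \ref{prop_kernel} then guarantees that $f^*$ acts loxodromically on $\HH^\infty$ and that $\lambda_1(f) > 1$. Combining Lemma \ref{lem_distance_log} with the definition \eqref{E:lambda-one}, the translation length of $f^*$ on $\HH^\infty$ is
$$
\tau(f) := \lim_{n\to\infty} \frac{d(\rho_{f^n}(L), L)}{n} = \lim_{n\to\infty} \frac{\log \deg(f^n)}{n} = \log\lambda_1(f).
$$

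Next, a loxodromic isometry of $\HH^\infty$ has exactly two fixed points on $\partial\HH^\infty$, so $\alpha$ must be one of them; call the other $\alpha'$, and write $f^*\alpha' = \mu\,\alpha'$ with $\mu > 0$. On the $f^*$-invariant plane $V = \mathbb{R}\alpha \oplus \mathbb{R}\alpha'$ the intersection form is non-degenerate of signature $(1,1)$, and since $(\alpha \cdot \alpha')>0$ the isometry identity $(f^*\alpha \cdot f^*\alpha') = (\alpha \cdot \alpha')$ forces $\lambda(f)\mu = 1$, so $\mu = \lambda(f)^{-1}$. Computing $d(f^*p, p)$ for any point $p$ lying on the axis $V \cap \HH^\infty$ then reduces to a $2\times 2$ hyperbolic calculation which yields $\tau(f) = |\log \lambda(f)|$. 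Combining this with $\tau(f) = \log \lambda_1(f)$ produces $\lambda_1(f) = \max(\lambda(f),\lambda(f)^{-1})$, as desired.

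The only delicate point in this argument is the identification $\tau(f) = |\log \lambda(f)|$ via the restriction of $f^*$ to the invariant plane $V$. This is standard for loxodromic isometries, and the passage from finite-dimensional hyperbolic space to $\HH^\infty$ costs nothing once one works inside $V$, but it is the step that requires the most care in a detailed write-up.
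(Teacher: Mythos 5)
Your proof is correct, but it takes a genuinely different route from the paper's. The paper handles the loxodromic case by applying Lemma~\ref{lem_big} (Siu's inequality) to the big and nef class $\alpha+\beta$, writing $\log\deg(f^n) = \log\bigl(\lambda(f)^n(\alpha\cdot L) + \lambda(f)^{-n}(\beta\cdot L)\bigr) + O(1)$ and taking the $n$-th root. You instead pass through the translation length: you use the identity $\tau(f) = \log\lambda_1(f)$ (already established inside the proof of Proposition~\ref{prop_kernel} via Lemma~\ref{lem_distance_log}), then compute $\tau(f)$ by restricting $f^*$ to the invariant plane $V = \R\alpha \oplus \R\alpha'$, where the $2\times2$ Lorentzian calculation gives $d(f^*p,p) = \cosh^{-1}\!\bigl(\tfrac{1}{2}(\lambda(f)+\lambda(f)^{-1})\bigr) = |\log\lambda(f)|$ on the axis. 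Both arguments use Proposition~\ref{prop_kernel} to dispatch the non-loxodromic case and both rely on the relation $\mu = \lambda(f)^{-1}$ for the second eigenvalue, obtained from $(f^*\alpha\cdot f^*\alpha') = (\alpha\cdot\alpha') \neq 0$. The paper's version stays closer to the degree-estimate machinery it uses elsewhere, while yours sidesteps Siu's inequality at the cost of the (standard, but worth spelling out) plane-restriction computation, which you currently only sketch. One would also want to say a word on why the translation length of $f^*$ on all of $\HH^\infty$ coincides with the translation along the geodesic $V\cap\HH^\infty$, i.e.\ that this geodesic is the axis; this follows from convexity of the displacement function, but it deserves a line.
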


Before proving the above statement we will need the following lemma.

\begin{lem} \label{lem_lambda_deg} 
For any $f$ in $G$, one has
\begin{equation*}
\max \left ( {\lambda(f)}, {\lambda(f)}^{-1} \right ) \leqslant 2  {\deg(f)}.
\end{equation*}
\end{lem}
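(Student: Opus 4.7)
The plan is to mimic the proof of Lemma \ref{lem_big}: apply Siu's inequality on a birational model resolving $f$, pair the resulting relation with the nef boundary class $\alpha$, and then rewrite the intersection products using the isometric action of $f^*$ together with the eigenvalue relation $f^*\alpha=\lambda(f)\alpha$.

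For the bound $\lambda(f)^{-1}\leqslant 2\deg(f)$, I would work on a birational model $X$ where $f$ is resolved. There both $L$ and $f^*L$ are big and nef, so Siu's inequality (as already invoked in the proof of Lemma \ref{lem_big}) yields
$$f^*L\leqslant \frac{2(f^*L\cdot L)}{(L^2)}L=2\deg(f)\,L\pmod{\psef}.$$
Intersecting both sides with the nef class $\alpha\in\partial\nefh$ gives $(f^*L\cdot\alpha)\leqslant 2\deg(f)(L\cdot\alpha)$. Since $f^*$ is an isometry of the intersection form, and since applying $(f^*)^{-1}$ to $f^*\alpha=\lambda(f)\alpha$ gives $(f^*)^{-1}\alpha=\lambda(f)^{-1}\alpha$, one has
$$(f^*L\cdot\alpha)=(L\cdot (f^*)^{-1}\alpha)=\lambda(f)^{-1}(L\cdot\alpha).$$
Since $(L\cdot\alpha)>0$, dividing yields $\lambda(f)^{-1}\leqslant 2\deg(f)$.

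For the symmetric bound $\lambda(f)\leqslant 2\deg(f)$, I would apply the same argument with $f$ replaced by $f^{-1}$. Siu's inequality gives $(f^{-1})^*L\leqslant 2\deg(f^{-1})\,L\pmod{\psef}$; intersecting with $\alpha$ and using the isometry identity $((f^{-1})^*L\cdot\alpha)=(L\cdot f^*\alpha)=\lambda(f)(L\cdot\alpha)$ produces $\lambda(f)\leqslant 2\deg(f^{-1})$. One then concludes with the classical fact that a Cremona transformation of $\Pg^2$ and its inverse have the same algebraic degree, which is immediate from the symmetric expression $\deg(f)=(\pi_1^*L\cdot\pi_2^*L)$ computed on the graph $\Gamma_f\subset\Pg^2\times\Pg^2$.

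The only point requiring a little care, rather than a genuine obstacle, is that $\alpha\in\overline{\mathcal{Z}}$ sits on the boundary of the Picard-Manin hyperbolic space and does not in general live on any single birational model $X$. However, its pairing with a pseudo-effective class coming from a finite model is well-defined and non-negative, by the definition of nef-ness in the Picard-Manin framework (combined with continuity of the intersection product in $\overline{\mathcal{Z}}$), which is exactly what is used when intersecting the Siu inequality against $\alpha$.
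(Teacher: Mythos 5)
Your argument is correct, and it overlaps with the paper's proof on one half but takes a genuinely different route on the other. The bound $\lambda(f)^{-1}\leqslant 2\deg(f)$ is obtained the same way in both: Siu's inequality gives $f^*L\leqslant 2\deg(f)L$ modulo pseudo-effectivity, one pairs with $\alpha$, and rewrites $(f^*L\cdot\alpha)=\lambda(f)^{-1}(\alpha\cdot L)$. (The paper phrases this last step using $f_*\alpha=\lambda(f)^{-1}\alpha$ together with the projection formula $(f_*\alpha\cdot L)=(\alpha\cdot f^*L)$, while you invoke the isometry property of $f^*$; these are equivalent statements.) For the other bound $\lambda(f)\leqslant 2\deg(f)$, however, the paper applies Siu's inequality directly to $\alpha$ itself, namely $\alpha\leqslant 2(\alpha\cdot L)/(L^2)\,L$, then pulls back by $f^*$ and intersects with $L$ — this never mentions $f^{-1}$. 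You instead reduce to the first bound by substituting $f^{-1}$ for $f$ and appealing to the classical identity $\deg(f)=\deg(f^{-1})$ for plane Cremona transformations. Your reduction is valid and pleasantly symmetric, but it imports an extra fact about Cremona maps that the paper's argument does not need; the paper's route applies Siu's inequality twice in slightly different ways and stays entirely within the Picard–Manin formalism. Your closing remark about $\alpha$ only living in the completion $\overline{\mathcal{Z}}$, and why intersecting it against a psef inequality from a finite model is still legitimate, is the right technical point to flag.
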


\begin{proof}
Let us prove the inequality ${\lambda(f)} \leqslant 2 {\deg(f)}$.
Since $\alpha $ is nef, we have by Siu's inequality 
\begin{equation*}
(f^* \alpha \cdot L) \leqslant 2  \dfrac{(\alpha \cdot L)}{(L^2)} (f^* L \cdot L).
\end{equation*}
Hence, since $f^*\alpha= \lambda(f) \alpha$, we obtain $\lambda(f) \leqslant 2 \deg(f)$ as required.
\medskip

For the second inequality, we compute the intersection product $(f_* \alpha \cdot L)$ and obtain:
\begin{equation*}
(f_* \alpha \cdot L) = \dfrac{1}{\lambda(f)} (f_* f^* \alpha \cdot L ) = \dfrac{1}{\lambda(f)} (\alpha \cdot L).
\end{equation*}
Moreover, the projection formula shows that:
\begin{equation*}
(f_* \alpha \cdot L) = (\alpha \cdot f^* L).
\end{equation*}
By Siu's inequality, we have:
\begin{equation*}
f^* L \leqslant 2  \dfrac{(f^* L \cdot L)}{(L^2)} L,
\end{equation*}
and using the fact that $\alpha$ is nef, we obtain:
\begin{equation*}
\dfrac{1}{\lambda(f)} (\alpha \cdot L) = (\alpha \cdot f^* L) \leqslant 2 \deg(f) (\alpha \cdot L).
\end{equation*}
Dividing by $(\alpha \cdot L)$  yields the second inequality.
\end{proof}

One important result is the following lemma, which provides good estimates on the degree.

\begin{lem} \label{lem_cantat_blanc} Suppose that $\alpha \in \partial \HH^\infty \cap \partial \nefh$ and that $f^*\alpha = \alpha$ for any $f\in G$.  Then for any $f,g \in G$, one has:
\begin{equation*}
\sqrt{{\deg(f\circ g)}} \leqslant  \sqrt{{\deg(f)}} +  \sqrt{{\deg(g)}} . 
\end{equation*}
\end{lem}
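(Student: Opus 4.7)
The plan is to exploit the fact that the intersection form, though Lorentzian on the whole Picard--Manin space $\overline{\mathcal{Z}}$, becomes negative semi-definite on the orthogonal hyperplane $\alpha^\perp := \{x \in \overline{\mathcal{Z}} \ : \ (x \cdot \alpha) = 0\}$. Since $\alpha$ is isotropic and lies in the closure of the positive cone, the Hodge index theorem forces $(x \cdot x) \leqslant 0$ for every $x \in \alpha^\perp$, with equality only on $\R\alpha$. I therefore introduce the seminorm $\|x\| := \sqrt{-(x \cdot x)}$ on $\alpha^\perp$; Cauchy--Schwarz applied to the positive semi-definite bilinear form $-(x \cdot y)$ yields the triangle inequality $\|x + y\| \leqslant \|x\| + \|y\|$.

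After renormalizing $\alpha$ so that $(L \cdot \alpha) = 1$, the hypothesis $f^* \alpha = \alpha$ implies $(f^* L \cdot \alpha) = (f^*L \cdot f^*\alpha) = (L \cdot \alpha) = 1$ for every $f \in G$. In particular, each of the four vectors $f^* L - L$, $g^* L - L$, $(f\circ g)^* L - L$ and $g^*(f^* L - L)$ lies in $\alpha^\perp$. The key identity is the telescoping
\begin{equation*}
(f\circ g)^* L - L \;=\; g^*(f^* L - L) \;+\; (g^* L - L),
\end{equation*}
obtained directly from $(f\circ g)^* = g^* \circ f^*$ and linearity of $g^*$. Applying the triangle inequality for $\|\cdot\|$, and using that $g^*$ preserves the intersection form (hence the seminorm) and maps $\alpha^\perp$ to itself (since $g^* \alpha = \alpha$), gives
\begin{equation*}
\|(f\circ g)^* L - L\| \;\leqslant\; \|f^* L - L\| + \|g^* L - L\|.
\end{equation*}

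Finally, since $f$ is birational one has $\dtop(f)=1$ and therefore $(f^* L \cdot f^* L) = (L \cdot L) = 1$, so
\begin{equation*}
\|f^* L - L\|^2 \;=\; -(f^* L)^2 + 2 (f^* L \cdot L) - (L)^2 \;=\; 2(\deg(f) - 1),
\end{equation*}
and similarly for $g$ and $f \circ g$. Substituting yields $\sqrt{\deg(f\circ g)-1} \leqslant \sqrt{\deg(f)-1}+\sqrt{\deg(g)-1}$. A short elementary calculation (squaring both sides and noting that $\sqrt{(y-1)(z-1)} \leqslant \sqrt{yz}$ for $y,z \geqslant 1$) then upgrades this to the advertised bound $\sqrt{\deg(f \circ g)} \leqslant \sqrt{\deg(f)}+\sqrt{\deg(g)}$. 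The only subtle point is the negative semi-definiteness of the intersection form on $\alpha^\perp$, which rests on the Hodge index theorem applied to the isotropic class $\alpha$; everything else is formal once the telescoping identity above is in place.
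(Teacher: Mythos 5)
Your proof is correct, and it takes a genuinely different route from the paper's. The paper normalizes $(\alpha\cdot L)=1$ and decomposes $f^*L=\deg(f)\,\alpha+v_1$ and $g_*L=\deg(g)\,\alpha+v_2$ with $v_1,v_2\in L^\perp$, then uses the projection formula to pass between $f^*$ and $f_*$, and finally applies Cauchy--Schwarz on the negative \emph{definite} space $L^\perp$ to control the cross term $(v_1\cdot v_2)$ in $\deg(f\circ g)=(f^*L\cdot g_*L)$. You instead stay entirely on the pullback side, observe that $f^*L-L$, $g^*L-L$ and $(f\circ g)^*L-L$ all live in $\alpha^\perp$ (on which the Hodge index theorem makes the form negative \emph{semi}-definite with radical $\R\alpha$), and combine them via the exact telescoping identity $(f\circ g)^*L-L=g^*(f^*L-L)+(g^*L-L)$ together with the triangle inequality for the seminorm $\sqrt{-(\cdot)^2}$ and the fact that $g^*$ preserves both the form and $\alpha^\perp$. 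Your version is more symmetric in $f$ and $g$, avoids pushforwards and the projection formula entirely, and in fact yields the strictly sharper intermediate inequality $\sqrt{\deg(f\circ g)-1}\leqslant\sqrt{\deg(f)-1}+\sqrt{\deg(g)-1}$, from which the stated bound follows by the elementary estimate $\sqrt{(y-1)(z-1)}\leqslant\sqrt{yz}$. Both proofs ultimately rest on the same two pillars (Hodge index giving a (semi)negative hyperplane, plus Cauchy--Schwarz), but the choice of hyperplane and the bookkeeping differ in a substantive way.
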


 \begin{proof}[Proof of Lemma \ref{lem_cantat_blanc}]
By rescaling $\alpha$, let us assume $(\alpha \cdot L ) = 1$. Given $f, g \in G$, we write:
\begin{equation*}
f^* L = \deg(f) \alpha + v_1,
\end{equation*} 
and 
\begin{equation*}
g_* L = \deg(g) \alpha + v_2
\end{equation*}
where $v_1, v_2 \in \HH^\infty$ and $(v_i \cdot L) =0$.
Using the projection formula, the fact that $(\alpha^2) = 0$ and our decomposition, we have:
\begin{equation} \label{eq_normalization_1}
1 = (\alpha \cdot L) = (g^*\alpha \cdot L) = (\alpha \cdot g_* L)=  (\alpha \cdot v_2).
\end{equation}
Similarly since $ f_* \alpha =  \alpha$, we also have: 
\begin{equation} \label{eq_normalization_2}
1 = (\alpha \cdot L) =  (f_*\alpha \cdot L) = (\alpha \cdot f^* L) =  (\alpha \cdot v_1).
\end{equation}
Let us also compute $(f^*L \cdot f^*L)$ and $(g_* L \cdot g_* L)$, 
\begin{equation} \label{eq_self_intersection_1}
1 = (f^*L \cdot f^*L) = 2 \deg(f)(\alpha \cdot v_1) + (v_1^2),
\end{equation}
and
\begin{equation*} 
 (g_*  L \cdot g_* L) = 2 \deg(g)(\alpha \cdot v_2) + (v_2^2).
\end{equation*}
Since $L$ is nef and nef classes in $\HH^\infty$ are stable by pushforward, we have $(g_*L \cdot g_*L) \geqslant 0$, hence:
\begin{equation}\label{eq_self_intersection_2}
-(v_2^2) \leqslant 2 \deg(g) (\alpha \cdot v_2).
\end{equation}
We now compute $\deg(f\circ g)$:
\begin{equation} \label{eq_product_parabolic}
\deg(f \circ g) = (g^*f^* L \cdot L ) = (f^* L \cdot g_* L) = \deg(f) (\alpha \cdot v_2) + \deg(g) (\alpha \cdot v_1) + (v_1 \cdot v_2).
\end{equation} 
Since the intersection form is negative definite on $\{ v \in \HH^\infty | (v\cdot L)=0 \}$, the Cauchy-Schwarz inequality implies that:
\begin{equation*}
|(v_1 \cdot v_2)| \leqslant \sqrt{(v_1^2)(v_2^2)}.
\end{equation*}
Applying the above inequality to \eqref{eq_product_parabolic}, we get:
\begin{equation*}
\deg(f\circ g) \leqslant \deg(f) (\alpha \cdot v_2) + \deg(g) (\alpha\cdot v_1) + \sqrt{ (v_1^2)(v_2^2)}. 
\end{equation*}
We now apply \eqref{eq_self_intersection_1} and \eqref{eq_self_intersection_2}:
\begin{multline*}
\deg(f\circ g)  \leqslant \deg(f) (\alpha \cdot v_2) + \deg(g) (\alpha \cdot v_1)  
+ \sqrt{2\deg(g) (\alpha \cdot v_2) \left| 1 - 2 \deg(f) (\alpha \cdot v_1) \right|  }.
\end{multline*}
This last inequality together with \eqref{eq_normalization_1} and \eqref{eq_normalization_2} gives:
\begin{equation*}
\deg(f\circ g) \leqslant \deg(f) + \deg(g) + (2\deg(g))^{1/2}\sqrt{ \left| 1 - 2 {\deg(f)}  \right| }.
\end{equation*}
By Lemma  \ref{lem_lambda_deg}, we have $\sqrt{ 2 \deg(f) -1} \leqslant \sqrt{2 \deg(f)}$, hence:
\begin{equation*}
\deg(f\circ g) \leqslant  \left ( \sqrt{\deg(f) } + \sqrt{\deg(g)  }\right )^2.
\end{equation*}
We conclude that:
\begin{equation*}
\sqrt{{\deg(f\circ g)}} \leqslant  \sqrt{{\deg(f)}} + \sqrt{{\deg(g)}}  ,
\end{equation*}
as required.
\end{proof}

\begin{proof}[Proof of Proposition \ref{prop_kernel}]

$(ii) \Leftrightarrow (iii)$.
We claim that for any $f \in G$
\begin{equation} \label{E:trans}
\tau(\rho_f) = \log \lambda_1(f).
\end{equation}
This is because 
\begin{align*}
\tau(\rho_f) & = \lim_{n \to \infty} \frac{1}{n} d(\rho_{f^n}(L), L) \\
& = \lim \frac{1}{n} \cosh^{-1} \left( {\deg(f^n)} \right) \\
& = \lim \frac{1}{n} \left( \log \deg(f^n) + O(1) \right) \\
& = \log \lambda_1(f).
\end{align*}
where we used $\log x \leq \cosh^{-1}(x) \leq \log x + \log 2$.
Hence, the isometry $\rho_f$ is not loxodromic if and only if $\lambda_1(f) =1$.

We now prove the implication $(i) \Rightarrow (ii)$, by showing that if $f$ is loxodromic, then ${\lambda(f)} \neq 1$. 
If $f$ is loxodromic, there exists a nef class $\beta \in \partial \nefh$ fixed by $f$ and which is not proportional to $\alpha$. 
Suppose that $f^* \beta = \mu(f) \beta$ where $\mu \in \R^*$. By the Hodge index theorem, the product $(\alpha \cdot \beta)$ is non-zero and using the projection formula, we obtain:
\begin{equation*} 
 (\alpha \cdot \beta)=(f^* \alpha \cdot f^* \beta) = \lambda(f) \mu(f) (\alpha\cdot \beta).
\end{equation*} 
We thus obtain that $\mu(f) = \frac{1}{\lambda(f)}$. 
We now compute:
\begin{equation} \label{E:alpha-beta}
((f^n)^* (\alpha + \beta) \cdot L) = \left ( \lambda(f)^n (\alpha \cdot L) + \dfrac{1}{\lambda(f)^n} (\beta \cdot L)  \right )
\end{equation}
hence
\begin{equation*}
 ((f^n)^* (\alpha + \beta) \cdot L) = \left ({\lambda(f)}\right )^n (\alpha \cdot L) + \left (\dfrac{1}{\lambda(f)} \right )^n (\beta \cdot L).
\end{equation*}
Since $f$ is loxodromic and since $\alpha + \beta$ is big and nef, Lemma \ref{lem_big} shows that the above sequence must diverge to infinity, hence $\lambda(f) \neq 1$.

We finally show that $(ii) \Rightarrow (i)$. 
By contradiction, suppose that $\lambda(f) \neq 1$; then Lemma \ref{lem_lambda_deg}  implies
$$2 {\deg(f^n)} \geqslant \max \left ( \left ({\lambda(f)}\right )^n, \left ( \dfrac{1}{\lambda(f)} \right )^n  \right ).$$
We thus conclude that $\lim_{n\rightarrow +\infty}\frac{1}{n} \log {\deg(f^n)} = \tau(\rho_f) > 0$, hence $f$ is loxodromic, which contradicts our assumption. 
\end{proof}

\begin{proof}[Proof of Corollary \ref{cor_kernel}] 
If $f \in G$ is not loxodromic, then $\lambda_1(f) = \lambda(f)= 1$ by Proposition \ref{prop_kernel}.  Otherwise, $f$ is loxodromic, and by Lemma \ref{lem_big} and equation \eqref{E:alpha-beta} we have 
$$\begin{aligned}
\log \deg (f^n)  & =  \log\ ( (f^n)^*(\alpha + \beta) \cdot L) + O(1) \\
& =  \log\ ( \lambda(f)^n (\alpha \cdot L ) + \lambda(f)^{-n} (  \beta \cdot L)  ) + O(1)\\
\end{aligned}$$
Hence, $\lambda_1(f) = \max(\lambda(f) ,  \lambda(f)^{-1})$, as required.
\end{proof}

\section{General facts on random products}

\subsection{Random products of Cremona transformations}

We fix a countable semigroup $G$ of birational maps on $\Pg^2$ and consider a random walk with transition law $\mu$ on $G$. We assume that the support of the measure $\mu$ generates $G$ and that the following integral is finite:
\begin{equation} \label{E:first-moment}
\int_G \log \deg(g) \  d\mu(g) < +\infty.
\end{equation}
Recall that the algebraic degree is submultiplicative, i.e. 
$$ \deg(f \circ g) \leqslant\deg(f) \deg(g)$$
hence Kingman's subadditive ergodic theorem shows the existence of the limit 
\begin{equation*}
\ell_\mu := \lim_{n\rightarrow +\infty}{ \frac{1}{n}} \int_G  \log \deg(f) \ d\mu_n(f).
\end{equation*}

For our result, we shall need a random variable which is closely related to a normal law.
\begin{defi} Fix $\sigma>0$. The \emph{folded Gaussian} distribution parametrized by $\sigma$, denoted $\mathcal{FN}(0,\sigma)$ is the pushforward of the normal distribution $\mathcal{N}(0,\sigma)$ by the  map $\varphi(x) = |x|$.
\end{defi}

We will then apply the following consequence of the central limit theorem.

\begin{prop} \label{prop_disjunction}
Consider a sequence $(Z_n)$ of $i.i.d.$ variables of mean $m$ and of variance $\sigma$.
Then the following holds: 
\begin{enumerate}
\item If $m \neq 0$, then 
$$\frac{|\sum_{i = 1}^n Z_i | - n |m|}{\sqrt{n}} \to \mathcal{N}(0, \sigma).$$
\item 
If $m =  0$, then 
$$\frac{|\sum_{i = 1}^n Z_i | }{\sqrt{n}} \to \mathcal{FN}(0, \sigma)$$
where 
$\mathcal{FN}(0, \sigma)$ is the folded  normal distribution.
\end{enumerate}
\end{prop}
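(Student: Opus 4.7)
The plan is to reduce both statements to the classical Central Limit Theorem applied to the partial sums $S_n := \sum_{i=1}^n Z_i$. Setting $S_n^{\ast} := \frac{S_n - nm}{\sqrt{n}}$, the classical CLT gives $S_n^{\ast} \to \mathcal{N}(0, \sigma)$ in distribution.

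For part (2), since $m = 0$ one has $S_n^{\ast} = \frac{S_n}{\sqrt{n}}$, and the continuous mapping theorem applied to the continuous function $\varphi(x) := |x|$ yields
\begin{equation*}
\frac{|S_n|}{\sqrt{n}} = \varphi(S_n^{\ast}) \longrightarrow \varphi_{\ast} \mathcal{N}(0, \sigma) = \mathcal{FN}(0, \sigma)
\end{equation*}
in distribution, where the last equality is precisely the definition of the folded Gaussian.

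For part (1), I would first reduce to the case $m > 0$ without loss of generality: if $m < 0$, replacing each $Z_i$ by $-Z_i$ leaves $|S_n|$ unchanged and flips the sign of $m$, while the Gaussian limit is symmetric. On the event $A_n := \{S_n > 0\}$ one has the identity
\begin{equation*}
\frac{|S_n| - nm}{\sqrt{n}} = \frac{S_n - nm}{\sqrt{n}} = S_n^{\ast}.
\end{equation*}
I would then verify that $\mathbb{P}(A_n^c) \to 0$: indeed
\begin{equation*}
\mathbb{P}(S_n \leq 0) = \mathbb{P}\bigl(S_n^{\ast} \leq -\sqrt{n}\, m\bigr) \longrightarrow 0,
\end{equation*}
since $-\sqrt{n}\, m \to -\infty$ and $S_n^{\ast}$ converges in distribution (alternatively, invoke the weak law of large numbers: $S_n/n \to m > 0$ in probability). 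A Slutsky-type argument then shows that the random variables $\frac{|S_n| - nm}{\sqrt{n}}$ and $S_n^{\ast}$ differ by a quantity supported on $A_n^c$ that tends to zero in probability, so they share the same limiting distribution $\mathcal{N}(0, \sigma)$.

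The argument is essentially routine; the only point requiring attention is in part (1), namely controlling the ``wrong-sign'' event $\{S_n \leq 0\}$ on which $|S_n| = -S_n$ would break the identity. This is handled precisely because, after the $\sqrt{n}$-normalization, the threshold $-\sqrt{n}\, m$ escapes to $-\infty$, rendering this event asymptotically negligible.
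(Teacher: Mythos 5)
Your proof is correct, and it fills in a gap that the paper leaves open: the paper states this proposition as a ``consequence of the central limit theorem'' and does not supply a proof, so there is nothing in the source to compare against. Your argument is exactly the standard one. Part (2) is an immediate application of the continuous mapping theorem to $x \mapsto |x|$. In part (1), the key observation is that the discrepancy between $\frac{|S_n| - n|m|}{\sqrt{n}}$ and $\frac{S_n - nm}{\sqrt{n}}$ (after reducing to $m>0$) is nonzero only on the event $\{S_n \leq 0\}$, whose probability tends to $0$; since two sequences of random variables that agree with probability tending to $1$ have the same distributional limit, the CLT for $S_n^\ast$ transfers to $\frac{|S_n| - n|m|}{\sqrt{n}}$. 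One small remark: you could phrase the last step even more directly than via Slutsky --- if $\mathbb{P}(Y_n \neq X_n) \to 0$ and $X_n \to X$ in distribution, then for any bounded continuous $F$ one has $|\mathbb{E}[F(Y_n)] - \mathbb{E}[F(X_n)]| \leq 2\|F\|_\infty \, \mathbb{P}(Y_n \neq X_n) \to 0$, so $Y_n \to X$ in distribution without needing to verify that $Y_n - X_n$ itself is small; but your version is also fine, since $Y_n - X_n \to 0$ in probability follows from $\mathbb{P}(Y_n \neq X_n) \to 0$. The degenerate case $\sigma = 0$ (Dirac mass limit) is also covered by your argument, which is worth noting since the paper's Theorem A explicitly allows $\sigma = 0$.
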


\section{Proof of Theorem A for elementary semigroups}

\subsection{Central limit theorem for the algebraic degree}

\subsubsection{Degree for elliptic semigroups}

Suppose that $G$ induces an elliptic semigroup action on $\nefh$.
Observe that we can write
\begin{equation*}
\log \deg(f_n) =   \log { (f_n^*L \cdot L)}.  
\end{equation*}
Since the semigroup $G$ is elliptic and $d(L, \rho_{f_n} (L)) = \log { (f_n^*L \cdot L)} + O (1)$ by Lemma \ref{lem_distance_log}, the second term above is bounded, so the sequence
\begin{equation*}
\dfrac{\log \deg(f_n) - n \ell_\mu}{\sqrt{n}} 
\end{equation*}
converges to the Dirac mass.

\subsubsection{Lineal semigroups}

Suppose the action of $G$ on the hyperbolic space is lineal. Let $\theta_+ , \theta_-$, be the two invariant nef classes on the 
boundary $\partial \nefh$ which are either globally $G$-invariant by pullback or pushforward or swapped, normalized so that $(\theta_+ \cdot L) = (\theta_- \cdot L) = 1$. 
Let $\lambda : G \to \mathbb{R}$ be defined so that $f^*(\theta_+) = \lambda(f) \theta_+$. Since the classes $\theta_{+}$ and $\theta_-$ are invariant classes on the boundary, the random variables 
$$\log  (f_n^*\theta_{+} \cdot L)\qquad \textup{and} \qquad \log (f_n^*\theta_{-} \cdot L)$$ 
describe a random walk on the real line by Lemma \ref{trick_lemma}.  
We prove the following central limit theorem. 

\begin{thm} \label{T:CLT-lineal}
Suppose that the averages given by:
\begin{equation*}
 \Lambda_\mu := \int_G \log \lambda(f) \ d\mu(f)
\end{equation*}
are finite, and moreover the variance 
\begin{equation*}
 \sigma^2 := \int_G \left( \log \lambda(f) - \Lambda_\mu\right)^2 \ d\mu(f)
\end{equation*}
is also finite. Then: 
\begin{enumerate}
\item If $\Lambda_\mu \neq 0 $, then 
$$\frac{\log \deg(f_n) - n  \Lambda_\mu}{\sqrt{n}} \to \mathcal{N}(0, \sigma).$$
\item 
If $  \Lambda_\mu = 0$, then 
$$\frac{\log \deg(f_n)  }{\sqrt{n}} \to \mathcal{FN}(0, \sigma)$$
where 
$\mathcal{FN}(0, \sigma)$ is the absolute value of the normal law $\mathcal{N}(0, \sigma)$.
\end{enumerate}
\end{thm}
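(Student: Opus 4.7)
The plan is to reduce $\log \deg(f_n)$ to (up to bounded error) the absolute value of a random walk on $\R$, and then invoke Proposition~\ref{prop_disjunction} directly. The crucial ingredients are already in place: Lemma~\ref{trick_lemma} turns $\lambda$ into a genuine semigroup morphism to $(\R^*,\cdot)$, and Lemma~\ref{lem_big} shows that intersecting with $L$ a big-and-nef eigenvector is equivalent to $\log\deg$ up to $O(1)$.

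First I would produce the needed big-and-nef eigenvector out of the two boundary fixed classes. The class $\alpha := \theta_+ + \theta_-$ is nef, and by the Hodge index theorem $(\theta_+\cdot\theta_-) > 0$, so $(\alpha^2) = 2(\theta_+\cdot\theta_-) > 0$; after rescaling $\alpha$ lies in $\nefh$ and is big. Next, mimicking the projection-formula argument of Proposition~\ref{prop_kernel}, I would show $f^*\theta_- = \lambda(f)^{-1}\theta_-$: indeed, writing $f^*\theta_- = \mu(f)\theta_-$, the invariance $(\theta_+\cdot\theta_-) = (f^*\theta_+\cdot f^*\theta_-) = \lambda(f)\mu(f)(\theta_+\cdot\theta_-)$ forces $\mu(f)=\lambda(f)^{-1}$. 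Consequently
\begin{equation*}
(f_n^*\alpha\cdot L) \;=\; \lambda(f_n)(\theta_+\cdot L) + \lambda(f_n)^{-1}(\theta_-\cdot L) \;=\; \lambda(f_n) + \lambda(f_n)^{-1}.
\end{equation*}
Combining this with Lemma~\ref{lem_big} gives $\log\deg(f_n) = \log\bigl(\lambda(f_n) + \lambda(f_n)^{-1}\bigr) + O(1)$, and since $\log(x + x^{-1}) = |\log x| + O(1)$ for $x>0$, I arrive at
\begin{equation*}
\log\deg(f_n) \;=\; |\log \lambda(f_n)| + O(1),
\end{equation*}
where the $O(1)$ is deterministic, depending only on $\alpha$.

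Because $\lambda : G \to (\R^*,\cdot)$ is a semigroup morphism (Lemma~\ref{trick_lemma}), $\log\lambda(f_n) = \sum_{i=1}^{n} \log\lambda(g_i) =: S_n$ is a sum of i.i.d.\ real random variables with mean $\Lambda_\mu$ and variance $\sigma^2$, both assumed finite. Substituting, $\log\deg(f_n) = |S_n| + O(1)$. Dividing by $\sqrt{n}$, the $O(1)$ term vanishes, so the limit law of $\tfrac{1}{\sqrt{n}}\bigl(\log\deg(f_n) - n|\Lambda_\mu|\bigr)$ is the same as that of $\tfrac{1}{\sqrt{n}}\bigl(|S_n| - n|\Lambda_\mu|\bigr)$. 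Proposition~\ref{prop_disjunction} then delivers the two cases directly: a Gaussian $\mathcal{N}(0,\sigma)$ when $\Lambda_\mu\neq 0$ (after possibly relabeling $\theta_+\leftrightarrow\theta_-$ so that $\Lambda_\mu\geqslant 0$, which matches the identification $\ell_\mu=|\Lambda_\mu|$ coming from the law of large numbers applied to $|S_n|/n$), and a folded Gaussian $\mathcal{FN}(0,\sigma)$ when $\Lambda_\mu=0$.

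The main obstacle in the argument is really conceptual rather than technical: passing from the reasonably delicate inequalities of Lemma~\ref{lem_cantat_blanc}/Lemma~\ref{lem_lambda_deg} to an identity $\log\deg(f_n) = |S_n| + O(1)$ that is sharp enough to be compatible with CLT-scale fluctuations. The key trick is the right choice of test class $\alpha = \theta_++\theta_-$, which simultaneously (i) lies in $\nefh$ with positive self-intersection so that Siu's inequality applies via Lemma~\ref{lem_big}, and (ii) has an explicit diagonal action $\lambda(f)\oplus\lambda(f)^{-1}$ that produces $|\log\lambda|$ rather than just $\log\lambda$. A secondary issue, which can be side-stepped by passing to the index-two subgroup that preserves the orientation of $\{\theta_+,\theta_-\}$, is the possibility that some elements of $G$ swap $\theta_+$ and $\theta_-$; in that case $\lambda$ is only defined on a sub-semigroup, but the random-walk argument is unaffected since the morphism structure on the relevant orbit is what matters.
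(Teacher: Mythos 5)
Your proposal is correct and follows essentially the same route as the paper: both build the big-and-nef test class $\alpha = \theta_+ + \theta_-$, invoke Lemma~\ref{lem_big} (Siu's inequality) to get $\log\deg(f) = \log(\lambda(f)+\lambda(f)^{-1}) + O(1) = |\log\lambda(f)| + O(1)$, reduce to a random walk on $\R$ via the morphism property of $\lambda$, and finish with Proposition~\ref{prop_disjunction}. Your two added remarks — deriving $f^*\theta_- = \lambda(f)^{-1}\theta_-$ explicitly from invariance of the intersection form, and flagging that one should relabel $\theta_+\leftrightarrow\theta_-$ to make $\Lambda_\mu\geqslant 0$ (so that the centering $n\Lambda_\mu$ in the statement really means $n|\Lambda_\mu|$) and pass to the finite-index sub-semigroup if some elements swap the endpoints — make the argument slightly more careful on two points the paper treats tersely, but they do not change the substance of the proof.
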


To prove Theorem \ref{T:CLT-lineal}, we first compute the degree as follows.

\begin{lem} \label{L:lineal-degree}
For each $f \in G$, we have 
$$\log \deg(f) =   \left| \log \lambda(f)  \right| + O(1)$$
where $O(1)$ is a  constant which depends only on the intersection product $(\theta_+\cdot \theta_-)$. 
\end{lem}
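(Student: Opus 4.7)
The plan is to estimate $\deg(f)$ by intersecting $L$ with the pullback of the big and nef class $\alpha := \theta_+ + \theta_-$ and then applying Lemma \ref{lem_big}.

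First, I would verify that $\alpha = \theta_+ + \theta_-$ is big and nef. Nef-ness is clear since $\theta_\pm \in \partial\nefh$. For bigness, since $\theta_+$ and $\theta_-$ are two linearly independent isotropic classes in the Picard–Manin space, the Hodge index theorem forces $(\theta_+ \cdot \theta_-) > 0$, hence $(\alpha^2) = 2(\theta_+ \cdot \theta_-) > 0$.

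Next, I would compute $f^*\alpha$ explicitly. Since $f^*$ is an isometry, from $f^*\theta_+ = \lambda(f)\,\theta_+$ and the hypothesis that $\theta_-$ is also preserved (writing $f^*\theta_- = \mu(f)\,\theta_-$), the identity
\begin{equation*}
(\theta_+ \cdot \theta_-) = (f^*\theta_+ \cdot f^*\theta_-) = \lambda(f)\mu(f)(\theta_+\cdot\theta_-)
\end{equation*}
and $(\theta_+\cdot\theta_-) > 0$ imply $\mu(f) = \lambda(f)^{-1}$. (In the case where $f$ swaps the two boundary fixed points, a completely analogous calculation shows that the two scaling factors are still reciprocal; this boils down to the same isometry relation.) Therefore
\begin{equation*}
(f^*\alpha \cdot L) = \lambda(f)(\theta_+\cdot L) + \lambda(f)^{-1}(\theta_-\cdot L) = \lambda(f) + \lambda(f)^{-1}
\end{equation*}
using the normalization $(\theta_\pm \cdot L) = 1$.

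Finally, I would apply Lemma \ref{lem_big} to the big and nef class $\alpha$, which yields
\begin{equation*}
\log\deg(f) = \log(f^*\alpha \cdot L) + O(1) = \log\bigl(\lambda(f) + \lambda(f)^{-1}\bigr) + O(1),
\end{equation*}
where the implicit constant depends only on $\alpha$, hence only on $(\theta_+\cdot\theta_-)$. To conclude, observe that for any $t > 0$,
\begin{equation*}
\max(t, t^{-1}) \leqslant t + t^{-1} \leqslant 2\max(t, t^{-1}),
\end{equation*}
so $\log(\lambda(f) + \lambda(f)^{-1}) = |\log \lambda(f)| + O(1)$ with the extra error bounded by $\log 2$. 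Combining, $\log\deg(f) = |\log\lambda(f)| + O(1)$.

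I do not anticipate serious obstacles: the whole argument is a direct application of Lemma \ref{lem_big} to the well-chosen class $\theta_+ + \theta_-$. The only mild subtlety is verifying bigness of $\alpha$ (handled by Hodge index on $\HH^\infty$) and the fact that the $O(1)$ constant is controlled by $(\theta_+\cdot\theta_-)$, which is built into the statement of Lemma \ref{lem_big} once $\alpha$ is fixed.
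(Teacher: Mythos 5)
Your proof is correct and follows essentially the same route as the paper: both apply Lemma \ref{lem_big} to the big-and-nef class $\theta_+ + \theta_-$ and then bound $\log(\lambda(f) + \lambda(f)^{-1})$ against $|\log\lambda(f)|$. The only small deviation is cosmetic: you derive $\mu(f) = \lambda(f)^{-1}$ from the invariance of the intersection form under $f^*$, whereas the paper simply asserts the pushforward relation $f_*\theta_- = \lambda(f)\theta_-$ and inverts; your version is slightly more self-contained but the content is identical.
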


\begin{proof}
Take $\theta_+ , \theta_-$ the two invariant nef classes on the boundary which are either globally $G$-invariant or swapped, 
normalized so that $(\theta_+ \cdot L) = (\theta_- \cdot L) = 1$.
Observe that the Hodge index theorem implies that $\theta_+ + \theta_-$ is big and nef. By Lemma \ref{lem_big}, we have:
\begin{equation} \label{eq_lineal}
|\log \deg(f) - \log \left ( (f^*\theta_+ \cdot L) + (f^* \theta_- \cdot L) \right )| \leqslant C,
\end{equation} 
where $C$ depends only on $(\theta_+ \cdot \theta_-)$.
Since $f^* \theta_+ = \lambda(f) \theta_+$, $f_* \theta_- = \lambda(f)\theta_-$,  
%and using Siu's inequality $L \leqslant C (\theta_+ + \theta_-)$, 
we have $f^* \theta_- = \frac{1}{\lambda(f)} \theta_-$ and 
\begin{equation} 
\log \left ( (f^*\theta_+ \cdot L) + (f^* \theta_- \cdot L) \right ) = \log \left ( \lambda(f) + \frac{1}{\lambda(f)} \right) =  \log (2 \cosh u(f) )
\end{equation} 
where $u(f) = \log \lambda(f)$ and $\ch$ is the hyperbolic cosine.
Hence, we rewrite \eqref{eq_lineal}  
as follows:
\begin{equation} \label{eq_lineal_ch}
\left |\log {\deg(f) }  - \log \ch u(f) \right | \leqslant C'
\end{equation}
with $C' = C + \log 2$.
Observe that the following inequality is satisfied for all $x \in \mathbb{R}$:
\begin{equation*}
\dfrac{e^{|x|}}{2} \leqslant \ch(x) \leqslant e^{|x|},
\end{equation*}
hence we get:
\begin{equation*}
\left | \log \ch u(f) - |u(f)| \right | \leqslant \log 2.
\end{equation*}
In particular, using the above equation and \eqref{eq_lineal_ch},  we obtain:
\begin{equation} \label{eq_log_u}
\left |\log {\deg(f)}   - |u(f)| \right | \leqslant C'',
\end{equation}
where $C'' = C + 2 \log 2> 0$. 
Hence, we decompose $\log \deg(f) $ as follows:
\begin{equation} \label{eq_decomp_deg}
\begin{aligned}
\log \deg(f) &  =\left| \log \lambda(f) \right| + O(1),
\end{aligned}
\end{equation}
completing the proof of the lemma.
\end{proof}

\begin{proof}[Proof of Theorem \ref{T:CLT-lineal}]

We have
\begin{equation*}
\log \deg(f_n) = |\log \lambda(f_n)| = \left| \sum_{i=1}^n \log(g_i) \right|.
\end{equation*}
Using Proposition \ref{prop_disjunction}, we conclude that Theorem \ref{T:CLT-lineal} holds.
\end{proof}

\subsubsection{Parabolic semigroups}

We now suppose that the action of $G$ on $\HH^\infty$ is parabolic.
Take $\alpha \in \partial \HH^\infty \cap \partial \nefh$ a globally $G$-invariant class, we shall choose $\alpha $ so that $(\alpha \cdot L) =1$.
For each $f\in G$, we have $f^*\alpha =\alpha$.
Note that $\lambda_1(f) = 1$ and by hypothesis 
%\begin{equation} \label{E:finite}
%\int_G {\deg(f)} \ d\mu(f) < +\infty
%\end{equation}
%holds, we also have 
\begin{equation} \label{eq_assumption_parabolic_focal}
\int_G \sqrt{\deg(f)} \ d\mu(f) < +\infty.
\end{equation}

By Lemma \ref{lem_cantat_blanc}, we have:
\begin{equation} \label{eq_focal_cocycle_condition}
\sqrt{\deg(f\circ g)} \leqslant \sqrt{\deg(f)} + \sqrt{\deg(g)}.
\end{equation}
Moreover, equation \eqref{eq_assumption_parabolic_focal} proves that the cocycle $\sqrt{\deg (\cdot)}$ belongs to $\LL^1(\mu)$, 
hence Kingman's subadditive ergodic theorem yields the almost sure convergence  
\begin{equation*}
\lim_{n\rightarrow +\infty} \dfrac{1}{n} \sqrt{ {\deg(f_n)}} = C,
\end{equation*}
where $C \in [0, \infty)$. 
This proves that almost surely 
$$\limsup_{n \to \infty} \dfrac{1}{\sqrt{n}} \log {\deg(f_n)} \leqslant 0.$$
Note also that
$$\liminf_{n \to \infty} \dfrac{1}{\sqrt{n}} \log {\deg(f_n)} \geqslant 0,$$
hence the sequence of random variables 
$$\dfrac{1}{\sqrt{n}} ( \log \deg(f_n))$$ 
converges to zero almost surely, hence in probability 
and the central limit theorem also holds for $\log \deg(f_n)$.  

\subsection{Central limit theorem for the first dynamical degree}

We now prove that $\log\lambda_1(f_n)$ satisfies a central limit theorem. 
Since there is an invariant class on the boundary, Corollary \ref{cor_kernel} holds and $\lambda_1(f) = \max(\lambda(f), \lambda(f)^{-1})$ for all $f$ in $G$. 
This proves that 
$$\log \lambda_1(f_n)  =   \left |\log {\lambda(f_n)}\right |. $$

Thus, as in the proof of Theorem \ref{T:CLT-lineal}, we obtain that the sequence
\begin{equation}
\dfrac{\log \lambda_1(f_n) - n \ell_\mu}{\sqrt{n}} 
\end{equation} 
 converges to 
\begin{equation*}
\mathcal{N}(0 , \sigma) 
\end{equation*}
if $\Lambda_\mu \neq 0$ and to 
\begin{equation*}
\mathcal{FN}(0, \sigma)
\end{equation*}
if $\Lambda_\mu = 0$.
This completes the proof of Theorem A for elementary semigroups.

\subsection{Summary of the Central limit theorem in the elementary case} \label{section_summary_elementary}
Let us set
\begin{equation*}
\Lambda_\mu = \int \log \lambda(f) \ d\mu(f),
\end{equation*}
where $f^*\alpha = \lambda(f) \alpha$ for all $f\in G$, with $\alpha \in \partial \HH^\infty \cap \nefh$.
The following table summarizes all possible limit behaviours.
Note that, as a consequence:

\begin{cor} 
If the semigroup $G$ is elementary, then $\sigma = 0$ if and only if $G$ is elliptic, parabolic, or if $G$ is lineal 
with $\lambda(f)$ constant on the support of $\mu$. 
\end{cor}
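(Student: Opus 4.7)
The plan is to go through the four types of elementary semigroups (elliptic, parabolic, lineal with $\lambda$ constant on $\mathrm{supp}(\mu)$, and lineal with $\lambda$ non-constant on $\mathrm{supp}(\mu)$) and simply read off, from the computations in the three preceding subsections, the value (or the vanishing) of the variance $\sigma$ appearing in the central limit theorem.

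For the elliptic case, the argument of \S4.1.1 already shows that $d(L,\rho_{f_n}L)$ is bounded, so $\log\deg(f_n) - n\ell_\mu$ is uniformly bounded and the renormalized sequence converges to a Dirac mass; thus $\sigma=0$. For the parabolic case, the subadditive application of Kingman's theorem to the cocycle $\sqrt{\deg(\cdot)}$ (Lemma \ref{lem_cantat_blanc} together with the moment assumption \eqref{E:moment2}) forces $\tfrac{1}{\sqrt{n}}\log\deg(f_n)\to 0$ almost surely; hence the rescaled sequence converges in law to the $\delta$-mass and $\sigma=0$.

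For the lineal case I would invoke Lemma \ref{L:lineal-degree} to write
\begin{equation*}
\log\deg(f_n) \;=\; \Bigl|\,\textstyle\sum_{i=1}^{n}\log\lambda(g_i)\,\Bigr| \;+\; O(1),
\end{equation*}
and then apply Proposition \ref{prop_disjunction} to the i.i.d.\ sequence $Z_i=\log\lambda(g_i)$. The classical CLT identifies $\sigma^2$ with the variance of $\log\lambda$ under $\mu$, and this variance vanishes if and only if $\log\lambda(f)$ is $\mu$-almost surely constant, i.e.\ $\lambda(f)$ is constant on the support of $\mu$. Combined with the previous two cases, this gives one direction of the corollary.

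For the converse, it suffices to check that if $G$ is lineal and $\lambda$ is not constant on $\mathrm{supp}(\mu)$ then $\sigma>0$. This is immediate from the same identification $\sigma^2 = \mathrm{Var}_\mu(\log\lambda)$: a non-constant bounded (or $L^2$) random variable has strictly positive variance. I expect no real obstacle in this argument since every ingredient is already in place: the main conceptual point, which was already handled in Lemma \ref{L:lineal-degree} and \S4.1.2, is the exact identification of the fluctuation scale with the fluctuations of the character $\lambda$, and the corollary is essentially a bookkeeping statement once the three CLTs for the elementary types have been established.
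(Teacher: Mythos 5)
Your proposal is correct and follows exactly the route the paper takes: the corollary is presented immediately after the summary table in \S\ref{section_summary_elementary} as a direct reading-off of the variance in each of the four elementary cases (elliptic, parabolic, lineal with $\lambda$ constant, lineal with $\lambda$ non-constant), which is precisely the case analysis you carry out. The only remark worth making is that the lineal sub-case with $\lambda$ constant and $\Lambda_\mu = 0$ is vacuous — constant $\lambda$ with $\int \log\lambda \, d\mu = 0$ forces $\lambda \equiv 1$ on $G$, contradicting the existence of a loxodromic element by Proposition~\ref{prop_kernel} — but this does not affect the correctness of the corollary or your argument.
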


%, in the lineal case one has $\sigma = 0$ if and only if $\lambda(f)$ is constant on the support of $\mu$. 

\medskip

{\small
\begin{tabular}{|l|l|l|l|l|}
\hline
\textbf{Type of group} & \textbf{Mean} & \textbf{Limit law for $\log \deg$} & \textbf{Limit law for $\log \lambda_1$} \\
\hline
elliptic & $\ell = 0$ & Dirac mass at zero & Dirac mass at zero \\
%& 	& $\sigma =0$ & $\sigma^2 = 0$ \\
\hline
parabolic &$\ell =0$ &Dirac mass at zero & Dirac mass at zero\\
\hline
lineal, $  \Lambda_\mu = 0 $ &  $\ell=0$ & Folded Gaussian & Folded Gaussian \\
 &  & $\sigma^2 = \int (\log \lambda(f))^2 d\mu$ & $\sigma^2 = \int (\log \lambda(f))^2 d\mu$ \\
\hline
lineal, $ \Lambda_\mu<0$ & $\ell =- \Lambda_\mu >0$ & Gaussian & Gaussian \\
 &  &  $\sigma^2 = \int \left( - \log {\lambda(f)}  + \Lambda_\mu \right)^2 d\mu$ & $\sigma^2 = \int \left(- \log {\lambda(f)}  + \Lambda_\mu \right)^2 d\mu$ \\
\hline
lineal, $  \Lambda_\mu>0$ & $\ell = \Lambda_\mu>0$  & Gaussian & Gaussian \\
 &  & $\sigma^2 = \int \left( \log \lambda(f) - \Lambda_\mu \right)^2 d \mu$ & $\sigma^2 = \int \left( \log \lambda(f) - \Lambda_\mu \right)^2 d \mu$ \\
\hline
non-elementary &  &  & \\ 
(see next sections) &$\ell > 0$ &  Gaussian & Gaussian \\
\hline
\end{tabular}
}

\section{Non-elementary semigroups}

Let us now assume that the semigroup $G$ generated by the support of $\mu$ is non-elementary. 
We recall the following results due to Maher-Tiozzo. 

\begin{thm}[Maher-Tiozzo \cite{maher_tiozzo}] 
Let $G$ be a non-elementary, countable semigroup of isometries of a $\delta$-hyperbolic space $X$, with hyperbolic (Gromov) boundary $\partial X$.
Let $\mu$ be a measure whose support generates $G$, and let $o \in X$ be a base point. 
Then for almost every sample path $f_n = g_1 \cdot \ldots \cdot g_n$, the sequence $(f_n o)$ converges to a point $\xi \in \partial X$. 
Moreover, the resulting hitting measure  is non-atomic and is the unique $\mu$-stationary measure on the boundary. 
\end{thm}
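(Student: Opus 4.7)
The plan is to follow the Maher--Tiozzo strategy, whose essential novelty over classical work (Furstenberg, Kaimanovich--Vershik, Karlsson--Ledrappier, \ldots) is that $X$ is \emph{not} assumed to be proper or geodesic; this is crucial here since $\HH^\infty$ is infinite-dimensional. I would organize the argument into four steps: positive drift, boundary convergence, existence of a stationary measure, and uniqueness plus non-atomicity.

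First, I would establish \emph{positive drift}, $\liminf_n d(o, f_n o)/n > 0$ almost surely. Because $G$ is non-elementary, I can extract two loxodromic elements $h_1, h_2 \in G$ whose fixed-point sets at infinity are pairwise disjoint. A ping-pong / Schottky argument produces, for any base point $o$ and any $M$, a finite product word built from $h_1, h_2$ whose application to $o$ moves it more than $M$ and whose Gromov product with $o$ is controlled. Combined with the fact that these special words occur in the random product with positive frequency (Borel--Cantelli using independence), this lower-bounds the drift. The finite first moment condition is not needed here; non-elementarity suffices.

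Second, I would prove \emph{convergence to $\partial X$}. The key is to show that the Gromov products $(f_n o, f_m o)_o$ tend to $\infty$ as $\min(n,m) \to \infty$ almost surely; by $\delta$-hyperbolicity this forces $(f_n o)$ to be a Cauchy sequence in the Gromov compactification and hence to converge to some $\xi \in \partial X$. The standard trick is to use the cocycle identity $d(o, f_n o) = d(o, g_1 o) + d(o, g_2 \cdots g_n o) - 2(g_1^{-1} o, g_2 \cdots g_n o)_o$ and to show that the Gromov product terms are tight; this combined with positive drift yields, via a Borel--Cantelli estimate, that $\inf_{m \geq n}(f_n o, f_m o)_o \to \infty$. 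The hard part here---the main obstacle---is producing tightness of the Gromov product without local compactness; one argues instead with the shadow sets of Maher--Tiozzo and a maximal inequality for the reversed random walk.

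Third, I would construct a \emph{stationary measure} by a Markov--Kakutani / Cesàro averaging argument on the space of Borel probability measures on the (sequential) Gromov compactification $X \cup \partial X$, which is a compact metrizable space under the shadow topology even when $X$ is non-proper. Any weak limit of $\frac{1}{N}\sum_{n=1}^N \mu^{*n}$ is $\mu$-stationary; the escape-to-infinity statement of step two shows this limit is supported on $\partial X$. For uniqueness, given any $\mu$-stationary $\nu$, the sequence $f_n \nu$ is a bounded martingale in $C(X \cup \partial X)^*$ and converges almost surely; using convergence of $f_n o$ to $\xi$ together with a contraction estimate (measures push to $\delta_\xi$ under a loxodromic), $f_n \nu \to \delta_\xi$ a.s., which identifies $\nu$ with the hitting distribution of $\xi$. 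Finally, non-atomicity: if some $\xi_0 \in \partial X$ had positive $\nu$-mass, stationarity $\nu = \int f_\ast \nu \, d\mu(f)$ would force a $G$-invariant finite set at infinity, contradicting the non-elementarity of $G$.
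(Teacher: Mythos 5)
The statement you are proving is quoted in the paper as an external theorem, so there is no in-paper proof to compare against; the authors simply cite Maher--Tiozzo and, crucially, import their machinery verbatim in \S 5.1 of the paper (horofunction compactification, local minimum map $\pi : X^h_\infty \to \partial X$). That machinery exists precisely to repair the gap in your sketch.

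The gap is in your Step 3: you assert that the Gromov compactification $X \cup \partial X$ is compact and metrizable ``even when $X$ is non-proper.'' This is false. For a non-proper hyperbolic space the Gromov boundary need not be compact, and for $\HH^\infty$ in particular it is a sphere in an infinite-dimensional Hilbert space and hence definitely non-compact. Consequently the Markov--Kakutani / Ces\`aro averaging argument has no compact state space to run on, and your stationary measure $\nu$ is never produced. Everything downstream (the martingale argument for uniqueness, the atom argument, and really also the tightness you invoke in Step 2) silently depends on having a stationary measure on a compact space, so the construction collapses.

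Maher--Tiozzo's actual route is different in exactly the place your argument breaks. They build the \emph{horofunction compactification} $\overline{X}^h \subset \textup{Lip}^1(X)$, which is compact and metrizable whenever $X$ is separable (here $X$ is separable because $G$ is countable), obtain a $\mu$-stationary measure $\nu$ there by the usual compactness argument, show $\nu$ charges only infinite horofunctions, and then push forward to $\partial X$ via the local minimum map. Non-atomicity of the pushforward is proved by the finite-invariant-set contradiction you describe, and then bounded-martingale convergence of $f_n \nu$ on $\overline{X}^h$, combined with shadow estimates and non-atomicity, forces $f_n \nu \to \delta_{\xi(\omega)}$ and hence $f_n o \to \xi(\omega)$ in $\partial X$. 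Note also that the order is essentially the reverse of yours: boundary convergence and the hitting measure are established \emph{first}, and positive drift is a consequence; your Step 1 (drift via ping-pong plus Borel--Cantelli) is in the spirit of the classical proper-space argument and is not, by itself, what drives the Maher--Tiozzo proof. If you replace ``Gromov compactification'' with ``horofunction compactification'' and reroute Steps 2--4 through the local minimum map, your outline can be made to match their argument.
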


Recall that a measure $\nu$ on a $G$-space $M$ is \emph{$\mu$-stationary} if $\int_G g_\star \nu \ d\mu(g) = \nu$. Moreover, it is 
\emph{$\mu$-ergodic} if it is not a non-trivial convex combination of $\mu$-stationary probability measures on $M$.

\begin{thm}[Maher-Tiozzo \cite{maher_tiozzo_cremona}]
Let $\mu$ be an atomic non-elementary probability measure on $\Bir(\Pg^2)$ with finite first moment. Then there exists $\ell_\mu > 0$ such that for a.e. random product $f_n = g_1 \cdot \ldots \cdot g_n$, we have:
\begin{equation*}
\lim_{n\rightarrow +\infty} \dfrac{1}{n} \log\deg(f_n) = \ell_\mu.
\end{equation*}
Moreover, if $\deg(f)$ is bounded on the support of $\mu$, then for almost every sample path, one has:
\begin{equation*}
\lim_{n\rightarrow +\infty } 
\dfrac{1}{n}\log \lambda_1(f_n) = \ell_\mu.
\end{equation*}
\end{thm}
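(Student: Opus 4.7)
The first assertion splits into (a) existence of $\ell_\mu \geqslant 0$ and (b) positivity $\ell_\mu > 0$. For (a), the submultiplicativity \eqref{E:submult} says $\log \deg(f \circ g) \leqslant \log \deg(f) + \log \deg(g)$, so the stationary process $\omega \mapsto \log \deg(g_1(\omega) \cdots g_n(\omega))$ is subadditive. Under the first moment hypothesis \eqref{E:first-moment}, Kingman's subadditive ergodic theorem applied to the Bernoulli shift $(G^\N, \mu^{\otimes \N}, \textup{shift})$ yields a shift-invariant limit $\ell_\mu \in [0,\infty)$ with $\tfrac{1}{n} \log \deg(f_n) \to \ell_\mu$ almost surely; ergodicity of the shift implies $\ell_\mu$ is deterministic.

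For (b), I transport the problem to $\HH^\infty$ via the representation $\rho$. By Lemma \ref{lem_distance_log},
\begin{equation*}
d(\rho_{f_n}(L), L) = \log \deg(f_n) + O(1),
\end{equation*}
so it suffices to show the drift of the random walk $(\rho_{f_n}(L))_{n \geqslant 0}$ on the Gromov-hyperbolic space $\HH^\infty$ is strictly positive. This is a direct application of the general drift-positivity result of Maher--Tiozzo \cite{maher_tiozzo}: a non-elementary random walk with finite first moment on a separable hyperbolic space has positive drift. The standard argument uses the unique $\mu$-stationary hitting measure $\nu$ on $\partial \HH^\infty$ (which exists by Maher--Tiozzo and is non-atomic since $G$ is non-elementary); one checks that the shifted Busemann cocycle $\beta_\xi(L, \rho_{g_1}^{-1}(L))$ integrates against $\nu \otimes \mu$ to a strictly positive constant, and this integral coincides with $\ell_\mu$ via a Birkhoff argument applied to the forward Bernoulli dynamics on $(\partial \HH^\infty \times G^\N, \nu \otimes \mu^{\otimes \N})$.

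For the second assertion, the upper bound $\limsup_n \tfrac{1}{n} \log \lambda_1(f_n) \leqslant \ell_\mu$ is immediate from $\lambda_1(f) \leqslant \deg(f)$ together with part (i). For the reverse inequality, I use the identity \eqref{E:trans}, $\log \lambda_1(f) = \tau(\rho_f)$, and the fact that under the bounded support assumption every $g \in \mathrm{supp}(\mu)$ acts with uniformly bounded displacement $d(L, \rho_g(L))$. The Maher--Tiozzo theorem then applies in the strong form: for a.e. sample path the sequence $\rho_{f_n}(L)$ converges to a boundary point $\xi \in \partial \HH^\infty$ and tracks the geodesic ray $[L, \xi)$ sublinearly, which together with the uniform bound forces $f_n$ to be loxodromic for all large $n$ and
\begin{equation*}
\tau(\rho_{f_n}) \geqslant d(\rho_{f_n}(L), L) - 2 \sup_n \bigl[ d(L, \rho_{f_n}(L)) - (\rho_{f_n}(L) \mid \xi)_L \bigr] - O(1),
\end{equation*}
where $(\cdot \mid \cdot)_L$ is the Gromov product. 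Dividing by $n$ and invoking sublinear tracking yields $\liminf_n \tfrac{1}{n} \tau(\rho_{f_n}) \geqslant \ell_\mu$.

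The main obstacle is the lower bound for $\tau(\rho_{f_n})$: translation length is global data of the isometry, while the drift is only infinitesimal, and a priori $\rho_{f_n}$ could have a large axis-displacement defect. The bounded support assumption is exactly what allows one to control this defect uniformly via the Gromov product estimate above; without it one needs a more delicate analyticity or Markov chain argument (as in \cite{gouezel_analycity, horbez}) which will be needed later for the full central limit theorem.
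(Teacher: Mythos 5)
This theorem is quoted from Maher--Tiozzo \cite{maher_tiozzo_cremona} and the present paper gives no proof of it, so there is no ``paper's own proof'' to compare against. I will therefore just assess your sketch on its own terms.

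The first assertion is handled correctly: Kingman's subadditive ergodic theorem for existence of $\ell_\mu$, and the transfer to $\HH^\infty$ via Lemma \ref{lem_distance_log} together with Maher--Tiozzo's positive-drift theorem for non-elementary random walks on separable hyperbolic spaces. That is essentially the standard route.

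The second assertion has a genuine gap. The key inequality you write,
$$\tau(\rho_{f_n}) \geqslant d(\rho_{f_n}(L), L) - 2 \sup_n \bigl[ d(L, \rho_{f_n}(L)) - (\rho_{f_n}(L) \mid \xi)_L \bigr] - O(1),$$
is not well-formed (the $\sup_n$ sits inside an expression that is still indexed by $n$), and the quantity in the bracket, which measures the deviation of $\rho_{f_n}(L)$ from the ray $[L, \xi)$, is not the defect that controls $\tau(\rho_{f_n})$. The correct estimate is Lemma \ref{lem_inequality_gromov}: $\tau(\rho_{f_n}) = d(L, \rho_{f_n}(L)) - 2 \langle \rho_{f_n}(L), \rho_{f_n}^{-1}(L) \rangle_L + O(1)$, and the quantity that must be shown to be $o(n)$ almost surely is the Gromov product between the \emph{forward} orbit point $\rho_{f_n}(L)$ and the \emph{backward} orbit point $\rho_{f_n}^{-1}(L)$. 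This is not a consequence of sublinear geodesic tracking of the forward walk alone; it requires showing that the forward and backward walks converge to distinct boundary points almost surely (which uses non-atomicity of the hitting measure and independence of increments) and then quantifying the divergence. Your sketch also asserts that sublinear tracking ``forces $f_n$ to be loxodromic for all large $n$,'' which does not follow from tracking and is a separate probabilistic statement. Finally, the role of the bounded-support hypothesis is not what you describe; it is not there to make the Gromov product bounded, and the explanation given would not close the argument even if the displayed inequality were repaired.
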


\subsection{The horofunction boundary} 
Let us recall the construction of the horofunction compactification of a non-proper hyperbolic space, as developed in \cite{maher_tiozzo}. 

Let $(X, d)$ be a metric space and let $o \in X$ be a base point. Then we define for each $x \in X$ the map $\rho_x : X \to \mathbb{R}$
$$\rho_x(z) := d(x, z) - d(x, o)\qquad \textup{for }z \in X.$$
The function $\rho_x$ is $1$-Lipschitz, and $\rho_x(o) = 0$. The assignment $x \mapsto \rho_x$ defines a map $\Phi : X \to \textup{Lip}^1(X)$
into the space of $1$-Lipschitz functions on $X$. The \emph{horofunction compactification} $\overline{X}^h$ of $X$ is defined as the closure of $\Phi(X)$ in $\textup{Lip}^1(X)$, with respect to the topology of 
pointwise convergence. If $X$ is separable, then $\overline{X}^h$ is compact and metrizable. 
Elements of $\overline{X}^h$ are called \emph{horofunctions}, and there are two types of them: \emph{finite horofunctions}, if $\inf_{x\in X} h(x) \in \mathbb{R}$, and \emph{infinite horofunctions} if $\inf_{x \in X} h(x) = - \infty$. We denote as $X_\infty^h$ the space of infinite horofunctions.

Moreover, there is a \emph{local minimum map} $\pi : \overline{X}_\infty^h \to X \cup \partial X$ defined as follows. 
If $h \in X_\infty^h$, then there exists a sequence $(x_n) \subseteq X$ such that $h(x_n) \to - \infty$. It turns out that 
such a sequence must converge in the Gromov topology to a point in the Gromov boundary $\partial X$, and the limit point 
does not depend on the particular choice of $(x_n)$. Hence, one defines a $G$-equivariant map $\pi : X_\infty^h \to \partial X$ as 
$$\pi(h_x) := \lim_{n \to \infty} x_n \in \partial X.$$
In fact, the local minimum map can also be defined for finite horofunctions, but we do not need it here.
By \cite[Proposition 4.4]{maher_tiozzo}, any $\mu$-stationary probability measure $\nu$ on $\overline{X}^h$ only charges infinite horofunctions, i.e. $\nu(X_\infty^h) = 1$.

\subsection{Central limit theorems for cocycles}

Fix $G$ a semigroup of birational maps of $\Pg^2$, and let $M$ be a compact $G$-space. 
Recall that a \emph{cocycle} is a function $\sigma : G \times M \to \mathbb{R}$ such that 
$$\sigma(gh, x) = \sigma(g, h x) + \sigma(h, x)\qquad \qquad \forall g, h \in G, \forall x \in M.$$
A cocycle $\sigma: G \times M \to \mathbb{R}$ has \emph{constant drift} $\lambda$ if there exists $\lambda \in \mathbb{R}$ such that
$$\int_G \sigma(g, x) \ d\mu(g) = \lambda$$
for any $x \in M$. 
A cocycle $\sigma : G \times M \to \mathbb{R}$ is \emph{centerable} if it can be written as
\begin{equation*}
\sigma(g, x) = \sigma_0(g, x) + \psi(x) - \psi(g \cdot x)
\end{equation*}
where $\sigma_0$ is a cocycle with constant drift and where $\psi : M \to \mathbb{R}$ is a bounded, measurable function.
Given a cocycle, we denote by $\sigma_{sup}(g) := \sup_{x\in M} | \sigma(g,x)|$. Finally, a cocycle has \emph{unique covariance} $v$
if 
$$v^2 = \int_{G \times M} (\sigma(g, x) - \lambda)^2 \ d\mu(g) d\nu(x)$$ 
for any $\mu$-stationary measure $\nu$. Recall the key ingredient in Benoist-Quint's central limit theorem for cocycles (\cite[Theorem 3.4]{benoist_quint_central}).

\begin{thm}[Central limit theorem for cocycles, I]  \label{T:CLT-cocycle}
Let $G$ be a discrete group, $M$ be a compact metrizable $G$-space and $\mu$ an atomic measure on $G$. Assume 
$\sigma : G \times M \to \R$ is a centerable cocycle with drift $\lambda$ and unique covariance $v \geqslant 0$ and such that
$$ \int_G \sigma_{sup}^2(g)  \ d\mu(g)< +\infty.$$ 
Then for any bounded continuous function $F$ on $\mathbb{R}$, uniformly in $x\in M$, one has:
\begin{equation*}
\lim_{n\rightarrow +\infty} \int_{G} F \left ( \dfrac{\sigma(g,x) - n \lambda}{\sqrt{n}} \right ) d\mu_n(g) = 
\frac{1}{\sqrt{2 \pi} v} \int_{\R} F(t) e^{-\frac{t^2}{2v^2} } \ dt.  
\end{equation*}
\end{thm}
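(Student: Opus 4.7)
The plan is to exploit centerability to reduce to a cocycle with constant drift, and then apply a martingale central limit theorem along the sample paths.

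First, using the centerability hypothesis, write $\sigma(g, x) = \sigma_0(g, x) + \psi(x) - \psi(g \cdot x)$ with $\sigma_0$ of constant drift $\lambda$ and $\psi : M \to \mathbb{R}$ bounded. Along any random product $f_n = g_1 \cdots g_n$ the telescoping contribution of the coboundary is $\psi(x) - \psi(f_n \cdot x) = O(1)$, which is negligible after normalization by $\sqrt{n}$. Hence it suffices to prove the theorem for $\sigma_0$.

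Next, since the $g_i$ are i.i.d., the product $g_1 \cdots g_n$ has the same distribution as the reversed product $g_n \cdots g_1$; iterating the cocycle identity on the latter yields
\begin{equation*}
\sigma_0(f_n, x) \stackrel{d}{=} \sum_{i=1}^n \sigma_0(g_i, g_{i-1} \cdots g_1 \cdot x).
\end{equation*}
Setting $Y_i := \sigma_0(g_i, g_{i-1} \cdots g_1 \cdot x) - \lambda$ and $\mathcal{F}_i := \sigma(g_1, \ldots, g_i)$, the constant-drift condition $\int_G \sigma_0(g, y)\, d\mu(g) = \lambda$ valid for \emph{every} $y \in M$ gives $\mathbb{E}[Y_i \mid \mathcal{F}_{i-1}] = 0$, so $(Y_i)$ is a martingale difference sequence. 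The moment hypothesis $\int \sigma_{sup}^2\, d\mu < + \infty$ provides $L^2$-control of the $Y_i$ uniformly in $x$. Moreover, any subsequential weak limit of the empirical measures $\tfrac{1}{n} \sum_{i=1}^{n} \delta_{g_{i-1} \cdots g_1 \cdot x}$ is $\mu$-stationary (a Breiman-type argument), and by the unique-covariance assumption every $\mu$-stationary probability measure $\nu$ satisfies $\int (\sigma_0 - \lambda)^2 \, d(\mu \otimes \nu) = v^2$. This forces $\tfrac{1}{n} \sum_{i=1}^n \mathbb{E}[Y_i^2 \mid \mathcal{F}_{i-1}] \to v^2$ in probability, independently of $x$. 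A standard martingale CLT (Brown's theorem, or Hall--Heyde) then delivers the convergence to $\mathcal{N}(0, v^2)$.

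The main technical obstacle is upgrading pointwise convergence at a fixed $x$ to the \emph{uniform} convergence in $x \in M$ demanded by the statement. This requires Lindeberg-type tightness bounds that are uniform in the starting point, and exploits compactness of $M$ together with the fact that the limiting variance $v^2$ is independent of the chosen $\mu$-stationary measure --- precisely the role of the unique covariance hypothesis. A further subtle point in our setting is that $G$ is a semigroup rather than a group; however, the martingale argument only uses forward orbits and so carries over verbatim from the group case.
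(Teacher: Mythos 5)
The paper does not give its own proof of this statement: it is imported as a black box, with the attribution to Benoist--Quint (\cite[Theorem 3.4]{benoist_quint_central}). So there is nothing ``in the paper'' to compare your argument against --- the right comparison is to the Benoist--Quint proof itself, which your sketch does essentially reproduce at the level of strategy. You reduce to constant drift via the coboundary, use the exchangeability of $(g_1,\ldots,g_n)$ to pass to the reversed product so that the cocycle identity produces a sum of martingale differences $Y_i = \sigma_0(g_i, g_{i-1}\cdots g_1\cdot x) - \lambda$, control the conditional quadratic variation by the Breiman argument plus unique covariance, and invoke a martingale CLT. This is indeed the architecture of the original proof.

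Two points deserve sharpening. First, the step where you claim $\tfrac{1}{n}\sum_{i=1}^n \mathbb{E}[Y_i^2\mid\mathcal F_{i-1}] \to v^2$ uses, implicitly, that $y\mapsto \int_G (\sigma_0(g,y)-\lambda)^2\,d\mu(g)$ is a bounded \emph{continuous} function on $M$, so that weak convergence of the empirical measures $\tfrac{1}{n}\sum_i \delta_{y_i}$ to a stationary measure gives convergence of the integrals. Boundedness is free from $\int \sigma_{\sup}^2\,d\mu<\infty$, but continuity of $\sigma_0(g,\cdot)$ is not automatic from the centerable decomposition, since the transfer function $\psi$ is only assumed bounded measurable. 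Benoist--Quint handle this by working with a continuous cocycle $\sigma$ and pushing the loss of continuity into the coboundary term, which is bounded and hence harmless after $\sqrt n$-normalization; your sketch should say this explicitly rather than silently assuming $\sigma_0$ is as regular as $\sigma$. Second, the uniformity in $x\in M$ is the genuine content of the statement, and your proposal flags it but does not close it. The resolution in Benoist--Quint is that both inputs to the martingale CLT --- the Lindeberg condition (controlled by $\sigma_{\sup}$, hence $x$-independent) and the convergence of the conditional variance to the $x$-independent constant $v^2$ --- can be made uniform, essentially because the unique covariance hypothesis rules out any dependence of the limiting variance on the starting point and because the Markov chain on the compact space $M$ has no escape to infinity. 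Stating this as the mechanism, rather than just naming it an obstacle, would complete the sketch. The closing remark about semigroups is beside the point here: the theorem is stated for a group $G$, and the semigroup subtleties in the paper are dealt with elsewhere.
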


However, a more general version of this theorem does not require the cocycle to have unique covariance. Indeed we have the following. 
As remarked in \cite[Remark 1.7]{horbez}, the proof is exactly the same as the proof of \cite[Theorem 4.7]{benoist_quint_hyperbolic}.

\begin{thm}[Central limit theorem for cocycles, II] \label{sigma-CLT}
Let $G$ be a discrete group, $M$ be a compact metrizable $G$-space and $\mu$ an atomic measure on $G$.
Let $\nu$ be a $\mu$-ergodic, $\mu$-stationary probability measure on $M$, and let $\sigma : G \times M \to \R$ be a 
centerable cocycle with drift $\lambda$. Then there exists $v \geqslant 0$ such that
for $\nu$-a.e. $x \in M$ we have, for any bounded, continuous function $F$, 
$$\lim_{n \to \infty} \int_G F\left( \frac{\sigma(g, x) - n \lambda}{\sqrt{n}} \right) \ d \mu_n(g) = 
\frac{1}{\sqrt{2 \pi} v} \int_{\R} F(t) e^{-\frac{t^2}{2v^2} } \ dt.$$
\end{thm}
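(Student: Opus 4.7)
The plan is to adapt the martingale-approximation strategy of Benoist--Quint explicitly flagged in the preamble, the only improvement over Theorem \ref{T:CLT-cocycle} being that we no longer assume the covariance to be independent of $\nu$. First, I would use the centerability hypothesis to reduce to a cocycle of constant drift: writing $\sigma(g,x) = \sigma_0(g,x) + \psi(x) - \psi(gx)$ with $\sigma_0$ of constant drift $\lambda$ and $\psi$ bounded, the coboundary telescopes along the walk $f_n = g_1 \cdots g_n$ to the uniformly bounded quantity $\psi(x) - \psi(f_n x)$, which disappears after dividing by $\sqrt{n}$. Hence it suffices to establish the CLT for $\sigma_0$.

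Next I would exploit the exchangeability of the i.i.d.\ increments by replacing $f_n$ with the reversed word $\check f_n := g_n g_{n-1} \cdots g_1$ (which has the same distribution under $\mu_n$) and iterating the cocycle identity to obtain the forward telescoping
\[
\sigma_0(\check f_n, x) - n\lambda \;=\; \sum_{k=1}^n \tau(g_k, Y_{k-1}),
\]
where $\tau(g,y) := \sigma_0(g,y) - \lambda$, $Y_0 = x$, and $Y_k := g_k Y_{k-1}$ is the forward Markov chain on $M$ with transition kernel $Pf(x) = \int_G f(gx)\,d\mu(g)$, for which $\nu$ is invariant. The constant-drift hypothesis now reads $\mathbb{E}[\tau(g_k, Y_{k-1}) \mid \mathcal{F}_{k-1}] = 0$, so the summands form a square-integrable martingale difference sequence once one invokes the moment bound $\int_G \sigma_{\sup}(g)^2\,d\mu(g) < +\infty$.

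At this stage I would apply the martingale central limit theorem. The conditional variances
\[
\frac{1}{n}\sum_{k=1}^n V(Y_{k-1}), \qquad V(y) := \int_G \tau(g,y)^2\,d\mu(g),
\]
converge by Birkhoff's ergodic theorem applied to the skew-product shift on the path space $(G^{\mathbb{N}} \times M, \mu^{\otimes \mathbb{N}} \otimes \nu)$; the $\mu$-ergodicity of $\nu$ pins the limit down to the deterministic constant $v^2 := \int_M V\,d\nu$ for $\nu$-almost every starting point $x$. The martingale CLT then yields $\frac{1}{\sqrt{n}}\bigl(\sigma_0(\check f_n, x) - n\lambda\bigr) \Rightarrow \mathcal{N}(0, v^2)$, and undoing the reductions of the first two paragraphs recovers the statement.

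The main obstacle is precisely the step where the conditional variance is pinned to a deterministic constant: Birkhoff's theorem gives convergence only for $\nu$-typical initial conditions, which is exactly why the conclusion is restricted to $\nu$-a.e.\ $x$, and without $\mu$-ergodicity the limit variance would depend on the ergodic component of $\nu$, producing a mixture of Gaussians rather than a genuine Gaussian. The centerability hypothesis is the other essential structural input: it is what converts an \emph{a priori} non-stationary additive functional into a martingale difference sequence, without which none of the standard CLT machinery is available.
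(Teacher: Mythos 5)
Your martingale sketch is correct and is exactly the Benoist--Quint argument that the paper invokes but does not reproduce: the paper's ``proof'' is only the citation of Horbez's Remark 1.7 together with \cite[Theorem 4.7]{benoist_quint_hyperbolic}, whose content is precisely that dropping the unique-covariance hypothesis of Theorem \ref{T:CLT-cocycle} and instead fixing a single $\mu$-ergodic stationary measure $\nu$ still pins the asymptotic conditional variance to the deterministic constant $v^2 = \int_{G \times M} (\sigma_0(g,x) - \lambda)^2 \, d\mu(g)\, d\nu(x)$, which is your Birkhoff-plus-ergodicity step, and yields the conclusion only for $\nu$-a.e.\ $x$ as you note. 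The only step worth spelling out is the conditional Lindeberg (or Lyapunov) condition required by the martingale CLT, which in this setting follows from the domination $|\tau(g,y)| \leqslant \sigma_{\sup}(g) + |\lambda|$ together with the second-moment hypothesis $\int_G \sigma_{\sup}(g)^2\, d\mu(g) < +\infty$ --- a hypothesis that is implicit in the paper's statement of the theorem and that you correctly carry along in your argument.
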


\begin{rem}
Let us note that \cite{maher_tiozzo} define the random walk as $f_n = g_1\dots g_n$, while \cite{benoist_quint_hyperbolic}, \cite{horbez} use the 
definition  $f_n = g_n \dots g_1$. 
In this paper, we define the random walk as $f_n = g_1 \dots g_n$ on the semigroup of rational maps, which, since the pullback is contravariant,
induces the random walk $\rho_{f_n} = \rho_{g_n} \dots \rho_{g_1}$ on the space of isometries. Thus, we can use the results of 
\cite{benoist_quint_hyperbolic}, \cite{horbez} verbatim. 
Note finally that the $n$-step distributions $\mu_n$ of the left and right random walk are equal, hence, as far as convergence in probability is concerned, 
results on one and the other are equivalent. On the other hand, results on almost sure convergence do \emph{not} automatically translate, but we do not 
directly use them here. 
\end{rem}

\subsection{The Busemann cocycle}

Let us now define $V$ as the subset of the Picard-Manin space given by $V := \overline{\textup{Span}_{f \in G}(f^\star L)}$ and $X := V \cap \mathbb{H}^\infty$.
Then $X$, with the metric $d$ induced by $\mathbb{H}^\infty$, is a geodesic, $\delta$-hyperbolic, and separable (since $G$ is countable) metric space, hence 
we can construct its horofunction compactification $M := \overline{X}^h$, which is metrizable. Moreover, $G$ acts by isometries on $X$ and by homeomorphisms on $M$.

Let us define the \textbf{Busemann cocycle}, denoted  $\beta : G \times  \overline{X}^h \to \mathbb{R} $, as
\begin{equation*}
\beta(g, x) := h_x( g_* \cdot L),
\end{equation*}
where $h_x$ is the horofunction associated to $x$. 
We use the following properties of the Busemann cocycle. 

\begin{prop} \label{P:cocycle-prop}
Let $\beta : G \times \overline{X}^h \to \mathbb{R}$ be the Busemann cocycle, and let $\mu$ be an atomic probability measure on the group of isometries of $(X,d)$ 
with finite second moment. Then there exists $\lambda \in \mathbb{R}$ such that:
\begin{enumerate}
\item \textup{(\cite[Corollary 2.7]{horbez})}
For any $\mu$-stationary measure $\nu$ on $\overline{X}^h$, 
$$\int \beta(g, x) \ d\mu(g) d\nu(x) = \lambda.$$
\item \textup{(\cite[Proposition 2.8]{horbez})} 
For all $\epsilon > 0$ there exists a sequence $(C_n) \in \ell^1(\mathbb{N})$ such that 
$$\mu_n( g \in G \ : \ |\beta(g, x) - n \lambda| \geqslant \epsilon n ) \leqslant C_n$$
for any $x \in \overline{X}^h$.
\end{enumerate}
\end{prop}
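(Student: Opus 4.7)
The plan is to embed the Busemann cocycle $\beta$ into the framework of \cite{horbez} and invoke his Corollary 2.7 and Proposition 2.8 directly. The task therefore reduces to verifying that the hypotheses of those results are satisfied in our setting.

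First I would check the structural requirements. The space $X$ is separable, since $G$ is countable and $V$ is by construction the closure of the span of the countable family $\{f^* L : f \in G\}$; it is geodesic and $\delta$-hyperbolic as a $G$-invariant subspace of $\mathbb{H}^\infty$; the pullback defines an isometric $G$-action on $X$; and $\mu$ is atomic because $G$ is countable. A direct computation with the standard transformation rule $(g \cdot h_x)(y) = h_x(g^{-1} y) - h_x(g^{-1} L)$ for horofunctions under isometries shows that $\beta$ satisfies the cocycle identity on $G \times \overline{X}^h$.

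Next I would verify the finite-second-moment assumption on $\beta$. Since every horofunction $h_x$ is $1$-Lipschitz with $h_x(L) = 0$, we have for every $x \in \overline{X}^h$
$$|\beta(g, x)| = |h_x(g_* L)| \leqslant d(L, g_* L),$$
and Lemma \ref{lem_distance_log} yields $d(L, g_* L) \leqslant \log(2 \deg g)$. Hence
$$\sup_{x \in \overline{X}^h} |\beta(g,x)|^2 \leqslant (\log(2 \deg g))^2.$$
Since $(\log t)^2 = o(\sqrt{t})$ as $t \to \infty$, the moment condition \eqref{E:moment2} implies $\int_G (\log \deg g)^2 \, d\mu(g) < + \infty$, which is exactly the finite-second-moment hypothesis required by \cite{horbez}.

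With these verifications in hand, part (1) is then the content of \cite[Corollary 2.7]{horbez}, which asserts that under finite second moment the drift $\lambda \in \mathbb{R}$ is the same for every $\mu$-stationary probability measure on $\overline{X}^h$, and part (2) is \cite[Proposition 2.8]{horbez}, providing the uniform summable large deviations estimate for $|\beta(g,x) - n \lambda|$. The one point I would want to watch carefully is the convention on the direction of the random walk, since \cite{horbez} works with $g_n \cdots g_1$ whereas our convention is $g_1 \cdots g_n$; but as the authors remark just before the proposition, the contravariance of the pullback reverses this ordering at the level of isometries, so Horbez's statements transfer verbatim.
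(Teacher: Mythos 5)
Your proposal is correct and takes the same route as the paper, which presents this proposition as a direct citation of \cite[Corollary 2.7 and Proposition 2.8]{horbez} with no separate written proof. Your verification of the hypotheses (separability and $\delta$-hyperbolicity of $X$, atomicity of $\mu$, the $1$-Lipschitz bound $|\beta(g,x)| \leqslant d(L, g_*L) \leqslant \log(2\deg g)$ --- noting that one applies Lemma~\ref{lem_distance_log} to $g^{-1}$ together with $\deg g = \deg g^{-1}$ for Cremona maps of $\Pg^2$ --- the resulting finite second moment from \eqref{E:moment2}, and the order-reversal observation from the paper's remark) simply makes explicit what the authors leave implicit in that citation.
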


Let us recall that  there is a $G$-equivariant map $\pi : X^h_\infty \to \partial X$ from the set of infinite horofunctions to the Gromov boundary. We shall exploit the following result \cite[Corollary 2.3]{horbez}.

\begin{lem} \label{L:cocycle-compare}
For all $x, y \in X^h_\infty$ such that $\pi(x) \neq \pi(y)$, there exists $C > 0$ such that for all $g \in G$
$$d(o, go)  - C \leqslant \max \{ \beta(g, x), \beta(g, y) \} \leqslant d(o, go).$$
\end{lem}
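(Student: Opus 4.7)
The upper bound is immediate from the definitions of the horofunction compactification: each horofunction $h_x$ is $1$-Lipschitz with $h_x(o) = 0$ (where I identify the base point $o$ with $L$), so $\beta(g,x) = h_x(g\cdot o) \leq d(o, g\cdot o)$, and the same for $y$. Hence the maximum of the two is at most $d(o, g\cdot o)$, with no hypothesis needed.

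For the lower bound, the plan is to convert the Busemann cocycle into a Gromov product based at $o$. For any two points $w, z \in X$, the algebraic identity
\[
d(w, z) - d(w, o) = d(z, o) - 2(z \mid w)_o
\]
holds tautologically from the definition of the Gromov product. If $h_x$ is an infinite horofunction with $\pi(x) = \xi \in \partial X$ realized by a sequence $x_n \to \xi$ in the Gromov topology, passing to the limit and using that the Gromov product $(z \mid \xi)_o := \liminf_n (z \mid x_n)_o$ is well-defined up to an additive $O(\delta)$ error in any $\delta$-hyperbolic space, one obtains
\[
h_x(z) = d(z, o) - 2(z \mid \xi)_o + O(\delta).
\]
Writing $\eta := \pi(y)$ and specializing to $z = g \cdot o$, this yields
\[
\max\{\beta(g,x), \beta(g,y)\} = d(o, g\cdot o) - 2\min\{(g\cdot o \mid \xi)_o,\; (g\cdot o \mid \eta)_o\} + O(\delta),
\]
so the lemma reduces to bounding the minimum of the two Gromov products uniformly in $g$.

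That final bound follows from the standard Gromov four-point inequality (extended to boundary points with an additional $\delta$-error),
\[
(\xi \mid \eta)_o \geq \min\{(\xi \mid g\cdot o)_o,\; (\eta \mid g\cdot o)_o\} - \delta.
\]
By hypothesis $\xi \neq \eta$ in $\partial X$, which means exactly that $(\xi \mid \eta)_o$ is finite; calling this quantity $D$, I conclude $\min\{(\xi \mid g \cdot o)_o, (\eta \mid g \cdot o)_o\} \leq D + \delta$, whence the lower bound of the lemma with a constant $C$ depending only on $D$ and $\delta$. The only technical subtlety worth flagging is that $X$ is non-proper (the Picard–Manin space is infinite-dimensional), which forces the Gromov product with a boundary point to be defined through a liminf and forces all identities at infinity to be read with additive $\delta$-errors; these issues are already handled in the framework of \cite{maher_tiozzo}, so the plan goes through verbatim.
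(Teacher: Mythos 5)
The paper does not prove this lemma; it imports it from Horbez (cited as Corollary~2.3 there). Your proof is correct and is, in substance, the same standard argument one finds in that reference: the upper bound is just the $1$-Lipschitz normalization of horofunctions, and the lower bound converts the Busemann function into a Gromov product with $\pi(x)$ and then invokes the four-point inequality together with the finiteness of $(\pi(x)\mid\pi(y))_o$.

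One phrasing should be tightened. You write that $h_x$ is ``realized by a sequence $x_n \to \xi$ in the Gromov topology'' and then ``pass to the limit''; the passage to the limit requires $\rho_{x_n} \to h_x$ \emph{pointwise}, which is a stronger condition than Gromov convergence to $\xi$. In a non-proper space the local minimum map $\pi: X^h_\infty \to \partial X$ need not be injective, so a sequence converging to $\xi$ in the Gromov sense is not automatically a sequence realizing the particular horofunction $h_x$. The correct order of logic is: choose $(x_n)$ with $\rho_{x_n} \to h_x$ pointwise, observe $d(x_n,o)\to\infty$ since $h_x$ is infinite, and then verify (via the definition of $\pi$ and a short $\delta$-hyperbolicity argument) that $(x_n)$ converges to $\pi(x)=\xi$ in the Gromov topology, so that $\lim_n (z\mid x_n)_o = (z\mid \xi)_o + O(\delta)$. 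With that adjustment the identity $h_x(z) = d(z,o) - 2(z\mid\xi)_o + O(\delta)$ and the rest of your argument go through exactly as written. You clearly have the non-properness issue in mind (you flag it at the end), so this is a wording fix rather than a mathematical gap.
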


Recall that the \emph{Gromov product} between $y$ and $z$ based at $x$ is $\langle y, z \rangle_x := \frac{d(x, y) + d(x, z)  - d(y, z)}{2}$.
We use the following basic fact about the Gromov product (see e.g. \cite[Proposition 5.8]{maher_tiozzo}). 

\begin{lem} \label{lem_inequality_gromov}
Let $(X, d)$ be a $\delta$-hyperbolic space, and let $o \in X$ be a base point. 
Then there exists a constant $C>0$ such that, for any isometry $f$ of $X$, 
\begin{equation*}
|\tau(f) - d(o , f o ) + 2 \langle f o, f^{-1} o \rangle_o | \leqslant  C.
\end{equation*}
\end{lem}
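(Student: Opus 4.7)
The plan is to first simplify the Gromov product expression algebraically, and then deduce the estimate from the geometry of the $f$-orbit inside the $\delta$-hyperbolic space.

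First, because $f$ is an isometry, $d(o, f^{-1}o) = d(o, fo)$ and, applying $f^{-1}$ to both entries, $d(fo, f^{-1}o) = d(o, f^{-2}o) = d(o, f^2 o)$. Substituting in the definition of the Gromov product yields
\[
2\langle fo, f^{-1}o\rangle_o \;=\; d(o,fo)+d(o,f^{-1}o)-d(fo,f^{-1}o) \;=\; 2 d(o, fo) - d(o, f^2 o),
\]
so the quantity we have to bound becomes
\[
\tau(f) - d(o,fo) + 2\langle fo, f^{-1}o\rangle_o \;=\; \tau(f) - \bigl(d(o,f^2 o) - d(o,fo)\bigr).
\]
Writing $a_n := d(o, f^n o)$, the claim is reduced to $|\tau(f) - (a_2 - a_1)| \leq C$.

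Next, I would control the increments of the sequence $(a_n)$. By definition $\tau(f) = \lim_n a_n/n$, and by subadditivity ($a_{n+m} \leq a_n + a_m$) the increments are bounded by $a_1$. The key geometric input is that the \emph{second differences} $a_{n+2}-2a_{n+1}+a_n$ are uniformly small in a $\delta$-hyperbolic space, because $f$-equivariance makes all consecutive triples $(f^n o, f^{n+1}o, f^{n+2}o)$ mutually congruent. Concretely, I would apply the four-point condition to the quadruple $(o, f^n o, f^{n+1}o, f^{n+2}o)$ to derive
\[
\bigl|(a_{n+2}-a_{n+1}) - (a_{n+1} - a_n)\bigr| \;\leq\; C'
\]
with $C'$ depending only on $\delta$. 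Iterating this bound shows that $a_n$ agrees, up to an additive $O(\delta)$ error, with the affine function $n \mapsto n(a_2 - a_1) + a_1$; dividing by $n$ and letting $n \to \infty$ gives $\tau(f) = (a_2 - a_1) + O(\delta)$, which is the desired estimate.

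The main obstacle is the second-difference bound. The four-point condition has two competing maxima, and one must check that the ``folding back'' case (where the orbit reverses direction near $f^{n+1}o$, making $\langle f^n o, f^{n+2}o\rangle_{f^{n+1}o}$ large) can only occur finitely often, or at least is corrected by the $f$-equivariance after a bounded number of steps. Equivalently, one must show that the Gromov products $\langle f^k o, f^{-k}o\rangle_o$ grow sublinearly in $k$, so that the linear drift of $(a_n)$ is pinned down within $O(\delta)$ by the first increment $a_2 - a_1$ alone. This is where $\delta$-hyperbolicity is used in its full strength, through the local-to-global principle for quasi-geodesics that identifies the translation length with the per-step displacement up to a bounded additive constant.
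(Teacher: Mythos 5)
Your algebraic reduction is correct: since $d(o, f^{-1}o) = d(o, fo)$ and $d(fo, f^{-1}o) = d(o, f^2 o)$, the quantity inside the absolute value equals $\tau(f) - (a_2 - a_1)$ with $a_n := d(o, f^n o)$. Note also that the paper does not supply its own proof of this lemma (it cites \cite[Proposition 5.8]{maher_tiozzo}), so you are arguing from scratch. Before critiquing the proof, one remark on the statement: the two-sided estimate as written cannot hold for \emph{arbitrary} isometries with $C$ depending only on $\delta$. Take $X=\mathbb{R}$ (so $\delta=0$), $o=0$, and $f(x)=2p-x$ a reflection. Then $\tau(f)=0$, $d(o,fo)=2|p|$, $\langle fo,f^{-1}o\rangle_o=2|p|$ (since $fo=f^{-1}o$), and the expression equals $2|p|$, unbounded. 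What is true, and what is actually used later in the paper, is the one-sided bound $\tau(f) \geqslant d(o,fo)-2\langle fo,f^{-1}o\rangle_o - O(\delta)$ combined with the trivial $\tau(f)\leqslant d(o,fo)$.

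Coming to your argument: the claimed second-difference bound $|a_{n+2}-2a_{n+1}+a_n|\leqslant C'(\delta)$ is false, and this is the central gap. The same reflection gives $a_n$ alternating between $0$ and $2|p|$, so the second differences are $\pm 4|p|$. Even for a loxodromic isometry of a tree whose axis lies at distance $D$ from $o$ one gets $a_0=0$ and $a_n=n\tau+2D$ for $n\geqslant 1$, hence $a_2-2a_1+a_0=-2D$. The heuristic that ``$f$-equivariance makes the triples $(f^n o, f^{n+1}o, f^{n+2}o)$ mutually congruent'' is a non sequitur here: the second difference depends on the positions of these points relative to the fixed base point $o$, which is not an isometric invariant of the triple. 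There is a second, independent flaw: even granting a uniform bound $|a_{n+2}-2a_{n+1}+a_n|\leqslant C'$, iteration only yields $|a_{n+1}-a_n-(a_2-a_1)|\leqslant (n-1)C'$, i.e.\ a linear drift of the first differences and a quadratic error $O(n^2C')$ for $a_n$ itself; dividing by $n$ and letting $n\to\infty$ does not produce the claimed $O(\delta)$ bound on $\tau(f)-(a_2-a_1)$.

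The obstruction you identify at the end — the folding-back case — is real and cannot be ``corrected after a bounded number of steps'' (the reflection folds back at every step). The correct mechanism for the one-sided inequality is the local-to-global principle for quasi-geodesics: when $d(o,fo)-2\langle fo,f^{-1}o\rangle_o$ exceeds a fixed multiple of $\delta$, the broken path through the orbit $\{f^n o\}$ is a local, hence global, quasi-geodesic, and one deduces $a_n \geqslant n\bigl(d(o,fo)-2\langle fo,f^{-1}o\rangle_o\bigr)-O(\delta)$; when that quantity is $\leqslant O(\delta)$ the inequality is vacuous. This case dichotomy, not a second-difference estimate, is what the proof requires, and it is entirely absent from your write-up.
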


Moreover, we use the fact that the Gromov product decays faster than any given function:

\begin{lem}[Taylor-Tiozzo \cite{taylor-tiozzo}, Lemma 3.4] \label{L:small-gromov}
Let $\mu$ be a non-elementary probability measure on a countable group $G$ of isometries of a $\delta$-hyperbolic space $X$,
let $o \in X$ be a base point and let $(f_n)$ be a random walk driven by $\mu$. 
Then for any function $\varphi : \mathbb{N} \to \mathbb{R}$ with $\limsup_{n \to \infty} \frac{\varphi(n)}{n} = 0$, we have 
$$\mathbb{P}\left( \langle f_n o, f_n^{-1} o \rangle_o \geqslant \varphi(n) \right) \to 0.$$
\end{lem}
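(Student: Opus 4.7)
The plan is to reduce the Gromov product to a difference of displacement and translation length and then exploit the Maher--Tiozzo almost sure convergence of both quantities to $\ell_\mu$. Specifically, I would start by applying Lemma~\ref{lem_inequality_gromov} with $f = f_n$, which rearranges to
$$\langle f_n o,\, f_n^{-1} o \rangle_o \;=\; \frac{d(o, f_n o) - \tau(f_n)}{2} + O(1),$$
where the $O(1)$ depends only on $\delta$. This converts the problem into estimating how much the translation length of $f_n$ lags behind the displacement $d(o,f_n o)$.

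Next, I would invoke the Maher--Tiozzo convergence theorems already recalled in this section. Since $G$ is non-elementary and $\mu$ has finite first moment, almost surely
$$\frac{d(o, f_n o)}{n} \longrightarrow \ell_\mu \qquad \text{and} \qquad \frac{\tau(f_n)}{n} \longrightarrow \ell_\mu.$$
Subtracting the two yields $(d(o, f_n o) - \tau(f_n))/n \to 0$ almost surely, hence $\langle f_n o, f_n^{-1} o\rangle_o = o(n)$ almost surely. This already handles all $\varphi$ of order $\varepsilon n$, giving convergence in probability $\langle f_n o, f_n^{-1} o\rangle_o / n \to 0$.

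To upgrade this to the full sublinear statement (i.e. $\varphi$ with $\varphi(n)\to\infty$ arbitrarily slowly), the key additional input is a tightness statement for $d(o, f_n o) - \tau(f_n)$. My approach is a \emph{persistence of direction} argument. By the Maher--Tiozzo convergence, $f_n o \to \xi^+ \in \partial X$ almost surely, and analogous boundary convergence holds for a companion sequence chosen so that its limit $\xi^-$ differs from $\xi^+$; the non-atomicity of the hitting measure $\nu^+$ gives $\xi^+ \neq \xi^-$ almost surely. One then shows that, with high probability, the geodesic $[o, f_n o]$ stays within a uniformly bounded neighborhood of a ray pointing toward $\xi^+$, and similarly that $[f_n o, f_n^2 o]$ is a translate making nearly the same angle. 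Since $\xi^+ \neq \xi^-$, the basepoint $o$ lies within a uniformly bounded distance of the axis of $f_n$, forcing $d(o, f_n o) - \tau(f_n) = O_P(1)$, which by Step~1 gives the tightness and hence the claim for any sublinear $\varphi$ tending to $\infty$.

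The hardest step will be this last persistence-of-direction argument, because the sequence $f_n^{-1} o$ is not itself a right random walk (it is a left walk in the inverse increments) and need not converge in the Gromov boundary. One bypasses this by not trying to interpret $f_n^{-1} o$ as converging, but rather by extracting the geometric information via the horofunction cocycle $\beta$ of the previous subsection: using Proposition~\ref{P:cocycle-prop} and Lemma~\ref{L:cocycle-compare} on pairs $(x,y)$ with $\pi(x) \neq \pi(y)$ (which exist thanks to non-atomicity of $\nu^+$) one controls $d(o, f_n o)$ up to a bounded error by $\max\{\beta(f_n, x), \beta(f_n, y)\}$, and a similar Busemann-function bookkeeping applied to $f_n^2$ yields the needed bound $d(o, f_n o) - \tau(f_n) = O_P(1)$.
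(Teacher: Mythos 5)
This lemma is cited from Taylor--Tiozzo and not proved in the present paper, so there is no internal proof to compare against; the evaluation below is on the merits of your argument alone. Your Step~1 is sound: by Lemma~\ref{lem_inequality_gromov}, $\langle f_n o, f_n^{-1} o \rangle_o = \tfrac{1}{2}\bigl( d(o, f_n o) - \tau(f_n) \bigr) + O(1)$, which correctly recasts the problem. But the rest does not close. Your Step~2 only gives $\langle f_n o, f_n^{-1} o \rangle_o = o(n)$ almost surely, which handles $\varphi(n) = \varepsilon n$; the lemma must also cover sublinear $\varphi(n) \to \infty$ such as $\varphi(n) = \sqrt{n}$ (the case actually used later in the paper), which is a tightness statement. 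Moreover, Step~2 invokes $\tau(f_n)/n \to \ell$ almost surely, which in the Maher--Tiozzo results recalled here is only asserted under a bounded-support hypothesis; the lemma assumes only non-elementarity and no moment condition.

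The real gap is Step~3. The tightness of $d(o, f_n o) - \tau(f_n)$ you want is, via Step~1, \emph{exactly} the tightness of $\langle f_n o, f_n^{-1} o \rangle_o$, so asserting that ``$o$ lies within a uniformly bounded distance of the axis of $f_n$'' is the conclusion restated, not a step toward it. The heuristic that $[o, f_n o]$ and $[f_n o, f_n^2 o]$ ``make nearly the same angle'' is not made quantitative, and the proposed workaround via the Busemann cocycle $\beta$ and Lemma~\ref{L:cocycle-compare} controls only $d(o,f_n o)$; it says nothing about $\tau(f_n)$, and the only relation you have between the two is again Lemma~\ref{lem_inequality_gromov}, i.e.\ the very Gromov product you are trying to bound --- so the reasoning is circular. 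What is actually needed, and missing, is a concrete decoupling argument of the type standard in this area: split $f_n = f_k \cdot (g_{k+1}\cdots g_n)$ at a fixed intermediate time $k$, and use almost sure convergence to the Gromov boundary together with shadow estimates and non-atomicity of the hitting measure to show that, with high probability, $f_n o$ and $f_n^{-1} o$ do not both lie in a shadow based far from $o$. This gives tightness of $\langle f_n o, f_n^{-1}o \rangle_o$ directly, without any recourse to a.s.\ convergence of translation lengths or to moment conditions, which the statement does not assume.
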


\section{Arithmeticity properties of the length spectrum} \label{S:arithmetic}

\begin{defi}
We call the \emph{length spectrum} of a semigroup $G < \textup{Bir}(\mathbb{P}^2)$ the set 
$$LS(G) := \{ \log \lambda_1(g) \ : \ g \in G \}.$$
Then, we say $G$ has \emph{arithmetic length spectrum} if there exists $a \in \mathbb{R}$ 
such that $LS(G) \subseteq a \mathbb{N}$.
Otherwise, we say the length spectrum of $G$ is \emph{non-arithmetic}. 
\end{defi}

Let us note for discrete subgroups of the group of isometries of a finite dimensional hyperbolic space, the length spectrum can never be arithmetic unless the group is elementary, as shown by \cite{Dalbo} in dimension $2$ and \cite{kim-arith} in any dimension. 
We will now show, however, that in infinite dimension, as in the case of the Cremona group, there exist non-elementary subgroups
with arithmetic length spectrum, proving Proposition \ref{P:exist-arith} from the introduction.

\begin{prop} \label{P:ex-ar}
There exist non-elementary subgroups of $\Bir(\Pg^2)$ which have arithmetic length spectrum.
\end{prop}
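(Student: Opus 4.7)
The plan is to exhibit two Hénon-type polynomial automorphisms of $\C^2$ with the same dynamical degree $d \geq 2$ which together generate a free non-elementary subgroup $G$ of $\Bir(\Pg^2)$, and then to argue that every element of $G$ has dynamical degree an integer power of $d$. By equation \eqref{E:trans}, this forces the length spectrum $LS(G)$ to lie inside $(\log d)\, \mathbb{N}$, giving arithmeticity.

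Concretely, I would take $h_1(x,y) = (y + P(x), x)$ with $\deg P = d$ and $h_2 := \phi \circ h_1 \circ \phi^{-1}$ for a sufficiently generic affine automorphism $\phi$ of $\mathbb{A}^2$, and set $G := \langle h_1, h_2 \rangle$. A ping-pong argument on the boundary $\partial \nefh$, using that both generators are loxodromic with $\lambda_1 = d$ and have pairwise distinct attracting and repelling fixed points for generic $\phi$, shows that $G$ is free of rank $2$ and acts non-elementarily on $\HH^\infty$. The key remaining step is to verify that for any reduced word
\[ w = h_{i_1}^{n_1} \cdots h_{i_k}^{n_k}, \qquad i_j \neq i_{j+1}, \]
the identity $\deg(w) = d^{|n_1| + \cdots + |n_k|}$ holds. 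This would follow from Jung's amalgamated product decomposition $\Aut(\C^2) = A *_{A\cap T} T$ together with the fact that Hénon maps act loxodromically on the associated Bass–Serre tree: for generic $\phi$, $w$ is in reduced form with respect to this amalgamation, so the indeterminacy point of each factor in $\Pg^2$ avoids the contracted line of the next and the degrees multiply. Taking $n$-th powers and invoking $\lambda_1(w) = \lim \deg(w^n)^{1/n}$ then yields $\log \lambda_1(w) = (|n_1| + \cdots + |n_k|) \log d \in (\log d)\, \mathbb{N}$.

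The main difficulty lies in this last algebraic stability statement for \emph{arbitrary} reduced words, not merely for $h_1 h_2$: one must simultaneously exclude every possible coincidence between indeterminacy loci and contracted divisors throughout the free group. A genericity argument on $\phi$ should suffice, but it requires tracking precisely how the indeterminacy set of $h_1^n$ on the line at infinity of $\Pg^2$ transforms under conjugation by $\phi$, so that no alignment occurs between any positive and negative power appearing in a reduced word.
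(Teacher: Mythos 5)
Your plan is sound and, despite surface differences, closely parallel to the paper's: both arguments rest on Jung's amalgamated product decomposition $\Aut(\C^2) = A *_{A \cap E} E$ and on Friedland--Milnor's degree formula for cyclically reduced compositions. The difference is in how the two generators are built: you conjugate a single H\'enon map by a generic affine $\phi$, while the paper sets $F = e_1 \circ a$ and $G = a \circ e_2 \circ a^2$, with $e_i(x,y) = (x + P_i(y), y)$ elementary of the same degree $d$ and $a$ a fixed non-triangular linear map; inserting \emph{distinct} powers of $a$ makes every cyclically reduced word in $F, G$ alternate manifestly between $E \setminus (A \cap E)$ and $A \setminus (A \cap E)$, so $\log\lambda_1 \in (\log d)\mathbb{N}$ drops out with no genericity hypothesis at all, and the remaining effort goes into non-elementarity.

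The gap you flag --- that degree multiplicativity must hold across every reduced word at once --- is in fact much smaller than your plan suggests. Write $h_1 = \sigma\tilde e$ with $\sigma(x,y) = (y,x) \in A \setminus (A \cap E)$ and $\tilde e(x,y) = (x, y + P(x)) \in E \setminus (A \cap E)$. When any reduced word in $h_1, h_2$ is expanded into Jung normal form, the only affine syllables that can appear are $\sigma^{\pm 1}$ (inside a power $h_i^n$) and, at the finitely many kinds of transitions between an $h_1$-block and an $h_2$-block, one of the eight elements $\phi^{\pm 1}$, $(\phi\sigma)^{\pm 1}$, $(\sigma^{-1}\phi)^{\pm 1}$, $(\sigma^{-1}\phi\sigma)^{\pm 1}$. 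The genericity you need is therefore that this \emph{finite} list of affine maps avoid $A \cap E$ --- a non-empty Zariski-open condition on $\phi$, not a countable intersection over the free group --- and the same finite check also handles the cyclic-reduction step needed to pass from $\deg$ to $\lambda_1$. For non-elementarity, your ping-pong plan is viable but still requires verifying that the four fixed boundary classes of $h_1^{\pm 1}$ and $h_2^{\pm 1}$ are pairwise distinct; the paper does this quite concretely, by identifying the boundary classes with valuations (via Favre--Jonsson) and checking that their centers are four distinct points on the line at infinity of $\Pg^2$.
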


The class of examples we construct is as follows. Consider two polynomials $P_1, P_2 \in \C[x]$ of degree $d_1, d_2$ such that $\deg(P_1 \pm P_2) = \max (\deg(P_1), \deg(P_2))$. Take two elementary maps $e_i := (x,y) \mapsto ( x + P_i(y), y) $ where $i=1,2$ and take $a := (x,y) \mapsto (2 x + y, x+ y)$ the cat map. 
Consider $F := e_1 \circ a$ and $G := a \circ e_2 \circ a^2$.     

%We postpone the proof to the end of the section and 
We begin with the following observation.
\begin{lem} \label{L:length}
The length spectrum of the group $\Gamma :=  \langle F,G \rangle$ is  $\mathbb{N}\log(d_1) + \mathbb{N} \log(d_2).$
\end{lem}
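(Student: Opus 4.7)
The plan is to invoke the amalgamated product structure $\Aut(\C^2) = \mathrm{Aff}_2 *_B \mathrm{Elem}$ coming from Jung's theorem, together with the Friedland--Milnor dynamical degree formula: for any polynomial automorphism $f$ whose cyclically reduced expression in the amalgamation reads $a_0 \, e_{i_1}^{\varepsilon_1} \, a_1 \cdots e_{i_m}^{\varepsilon_m} \, a_m$ with each $e_{i_j}^{\varepsilon_j} \in \mathrm{Elem}\setminus B$ and each interior $a_j$ (i.e.\ $1 \leqslant j \leqslant m-1$) in $\mathrm{Aff}_2\setminus B$, one has $\lambda_1(f) = \prod_{j=1}^m \deg(e_{i_j})$, with the convention $\lambda_1 = 1$ when $m = 0$. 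A preliminary observation is that the cat map, with matrix $\bigl(\begin{smallmatrix}2 & 1 \\ 1 & 1\end{smallmatrix}\bigr)$, satisfies $a^k \notin B$ for every $k \ne 0$: indeed $B$ consists of affine maps with vanishing lower-left coefficient, and iterates of $a$ have Fibonacci-like entries, none of which vanish.

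For the inclusion $LS(\Gamma) \subseteq \N \log(d_1) + \N \log(d_2)$, I would substitute $F = e_1 a$, $F^{-1} = a^{-1} e_1^{-1}$, $G = a e_2 a^2$, $G^{-1} = a^{-2} e_2^{-1} a^{-1}$ into an arbitrary word on $F^{\pm 1}, G^{\pm 1}$, and then collect consecutive powers of $a$. A direct case analysis over the sixteen adjacencies of generators shows that whenever two consecutive generators do not fully cancel, the total exponent of $a$ sitting between the two elementary letters that they supply is non-zero; hence the resulting word $a^{k_0}\, e_{i_1}^{\varepsilon_1}\, a^{k_1} \cdots e_{i_m}^{\varepsilon_m}\, a^{k_m}$ is reduced in the amalgamation by the preliminary step. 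Letting $n_1, n_2$ be the numbers of surviving $e_1^{\pm 1}$- and $e_2^{\pm 1}$-letters after cyclic reduction, Friedland--Milnor yields $\lambda_1(w) = d_1^{n_1} d_2^{n_2}$, so $\log\lambda_1(w) \in \N\log(d_1) + \N\log(d_2)$.

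For the reverse inclusion, fix $(n_1, n_2)\in\N^2$. Then $F^{n_1} G^{n_2} = (e_1 a)^{n_1}(a e_2 a^2)^{n_2}$ unfolds, after grouping the $a$-powers, to a reduced word containing exactly $n_1$ occurrences of $e_1$ and $n_2$ of $e_2$ separated by non-trivial powers of $a$ (the interior exponents being $1$, $2$ and $3$). Since the first letter is $e_1\in\mathrm{Elem}$ and the last is an $a$-power in $\mathrm{Aff}_2\setminus B$ (with the obvious adjustments when $n_1 = 0$ or $n_2 = 0$), no cyclic cancellation occurs, and Friedland--Milnor gives $\lambda_1(F^{n_1} G^{n_2}) = d_1^{n_1} d_2^{n_2}$, realizing every element of $\N\log(d_1) + \N\log(d_2)$. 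The main technical delicacy is the case analysis verifying that the interior $a$-powers are never trivial, together with the bookkeeping of cyclic reduction; the hypothesis $\deg(P_1 \pm P_2) = \max(\deg P_1, \deg P_2)$ is not needed for this lemma (it enters only to establish non-elementarity of $\Gamma$ in Proposition~\ref{P:ex-ar}), precisely because the non-triviality of the interior $a$-powers prevents any accidental fusion of adjacent elementary letters.
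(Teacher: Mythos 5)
Your proof is correct and, at its core, takes the same route as the paper: invoke the Jung amalgam, expand $F^{\pm 1}, G^{\pm 1}$ into letters $e_1^{\pm 1}, e_2^{\pm 1}, a^k$, collect the $a$-powers, verify the word is reduced in the amalgam, and apply Friedland--Milnor. The refinements you add are genuine and worth noting. First, the paper's proof lists the fused blocks $e_2 e_1$ and $e_1^{-1} e_2^{-1}$ as possible elementary slots, implicitly allowing the $a$-power between two elementary letters to vanish; it is precisely to keep such fused blocks in $E \setminus (A \cap E)$, with the right degree, that the hypothesis $\deg(P_1 \pm P_2) = \max(\deg P_1, \deg P_2)$ is imposed. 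Your adjacency-by-adjacency check (and the matching check for the cyclic boundary exponent $k_0 + k_m$) shows that with $F = e_1 a$, $G = a e_2 a^2$ and $a$ the cat map, every intervening $a$-power is a nonzero power of $a$ and hence lies in $A \setminus B$ by the Fibonacci observation; so the fused blocks never actually occur and the hypothesis on $P_1 \pm P_2$ is indeed superfluous for this lemma. (You attribute its role to the non-elementarity argument, but that proof also runs through valuations and contracted points at infinity without using it; it appears to be a conservative hypothesis rather than a needed one here.) Second, you supply the reverse inclusion via $F^{n_1} G^{n_2}$, which the paper's proof leaves implicit despite the lemma asserting an equality of sets. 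One small caveat worth stating explicitly: for $e_i$ to lie in $E \setminus B$ (and for Friedland--Milnor to return $\lambda_1 > 1$) one needs $d_i \geqslant 2$, an assumption that is implicit in both your write-up and the paper's.
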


\begin{proof}
Denote by $E$ the subgroup of elements of the form $(\alpha x + P(y), \beta y + \gamma )$ where $\alpha ,\beta\in\C^*,\gamma \in \C$ and let us denote by $A$ the group of affine transformations. 
Any non-trivial element $g$ of $\Gamma$ can be conjugated to $g' = F^{i_1} G^{j_1} \ldots F^{i_n} G^{j_n}$, where $i_1,j_n \in \mathbb{Z}$, $ i_2 ,\ldots,i_n, j_1 ,\ldots, j_{n-1} \in \mathbb{Z}^* $ and so that the word is cyclically reduced (i.e., the last letter is not the inverse of the first one).
Since $a \notin A\cap E$, this element  can be decomposed into an alternating product of   elements in  $\{ e_1, e_2, e_1^{-1}, e_2^{-1} , e_2  e_1, e_1^{-1} e_2^{-1}\} \subset E \setminus (A\cap E)$ and elements in $\{ a^k \}_{k\in \mathbb{Z}^*} \subset A \setminus (A\cap E)$. 
Using \cite[Theorem 2.1]{friedland_milnor}, we deduce that the degree of $g'$ is of the form $ d_1^k d_2^l$ where $k,l \in \mathbb{N}$. Moreover, since $g'$ is cyclically reduced, $\deg((g')^n) = (\deg(g'))^n$ for any $n \geq 1$; hence, $\lambda_1(g) = \lambda_1(g')$ is also of the form $ d_1^k d_2^l$, as required.
\end{proof}

In the remaining part of the section, we will show that the action of the group $\langle F,G\rangle$ on the hyperbolic space $\mathbb{H}^\infty$
is non-elementary. 

We shall introduce a particular class of $b$-divisors (i.e. elements of $\varprojlim \NS(X)$), associated to valuations.  
A valuation $\nu $ on $\C^2$ is a function from the field of rational functions $\C(x,y)$ to $\R \cup \{+\infty \}$ satisfying the following conditions: 
\begin{enumerate}
\item[(i)] $\forall f,g\in \C(x,y)$, $\nu(fg) = \nu(f) + \nu(g)$. 
\item[(ii)] $\nu(0)= +\infty$.
\item[(iii)] $\nu(f+g) \geq \min(\nu(f), \nu(g))$ for all $f,g \in \C(x,y)$. 
\end{enumerate} 

A basic example is the valuation $\nu_0=-\deg$. 
We denote by $\mathcal{V}_0$ the set of valuations on $\C^2$ such that there exists a generic affine function $L$ satisfying $\nu(L)<0$. It is endowed with the topology of pointwise convergence.   
One important application is that from any sequence of valuations $(\nu_n)$ which is pointwise bounded, we can extract a subsequence converging to a valuation $\nu_\infty$. Any polynomial automorphism $F$ acts on valuations as follows: $(F_* \nu)(f) := \nu(f \circ F)$.

To any appropriate valuation $\nu$, one can associate a class in the Picard-Manin space $Z_\nu$. We use the following results of Favre-Jonsson:

\begin{lem}[\cite{favre_jonsson_dynamical_compactification}, Lemma A.1] The assignment $\nu \to Z_\nu$ from $\mathcal{V}_0$ to the set of $b$-divisors over $\mathbb{P}^2$ is a continuous injection. 
\end{lem}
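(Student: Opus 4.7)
The plan is to unpack the construction of $Z_\nu$ incarnation by incarnation on each birational model $\pi\colon X \to \Pg^2$, and to verify continuity and injectivity on each such model separately. Recall that a $b$-divisor is a coherent family $\{D_X\}_X$ with $D_X \in \NS(X)_{\R}$, compatible under pushforward, and that the natural topology on the space of $b$-divisors is the projective-limit topology, i.e.\ convergence of each incarnation $D_X$ in the finite-dimensional vector space $\NS(X)_{\R}$. In the Favre--Jonsson framework, $Z_\nu|_X$ is a finite linear combination of the prime divisors of $X$ whose centers lie above the center of $\nu$ on $\Pg^2$, with coefficients that depend linearly on the values $\nu(\phi_E)$ for chosen local equations $\phi_E$ of each such prime divisor $E$.

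For continuity, pointwise convergence $\nu_n \to \nu$ in $\mathcal{V}_0$ says exactly that $\nu_n(f) \to \nu(f)$ for every $f \in \C(x,y)$. Applied to the finitely many local equations entering the description of $Z_\nu|_X$ on a fixed model $X$, this yields $Z_{\nu_n}|_X \to Z_\nu|_X$ in $\NS(X)_{\R}$. Since convergence of $b$-divisors is tested modelwise, this is the desired continuity.

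For injectivity, assume $Z_{\nu_1} = Z_{\nu_2}$. Equating incarnations on every model $X$, one reads off $\nu_1(\phi_E) = \nu_2(\phi_E)$ for every prime divisor $E$ of $X$ and every local equation $\phi_E$. Given any nonzero $f \in \C(x,y)$, one can pass to a birational model $X_f \to \Pg^2$ on which the divisor of $f$ becomes a simple normal crossings divisor, so that $f$ factors locally as a product of such $\phi_E$ up to a nowhere-vanishing function. The valuation axioms $\nu(fg) = \nu(f) + \nu(g)$, together with $\nu(u) = 0$ on units (since any $\nu \in \mathcal{V}_0$ is trivial on $\C^\times$), then force $\nu_1(f) = \nu_2(f)$ for every rational function $f$, hence $\nu_1 = \nu_2$.

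The main obstacle is not the verification itself, which is essentially linear algebra on each model, but the careful setup of the correspondence $\nu \mapsto Z_\nu$: one must specify a compatible incarnation on every birational model of $\Pg^2$ and check functoriality under pushforward. This is exactly what the appendix of \cite{favre_jonsson_dynamical_compactification} achieves, relying on resolution of singularities and the structure theory of quasi-monomial valuations. Once that framework is in place, the two paragraphs above deliver continuity and injectivity.
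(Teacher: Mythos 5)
This statement is not proved in the paper at all; it is cited verbatim from the appendix of Favre--Jonsson, so there is no paper proof to compare your attempt against. Evaluating your sketch on its own terms, the main difficulty is that the description you give of the incarnation $Z_\nu|_X$ is not the Favre--Jonsson construction, and the rest of your argument leans on it. You write that $Z_\nu|_X$ is a finite linear combination of prime divisors of $X$ over the center of $\nu$ with coefficients linear in $\nu(\phi_E)$. In fact $Z_\nu$ is built as a \emph{nef} $b$-divisor characterized via the intersection pairing (its intersection numbers against Cartier $b$-divisors, equivalently against divisorial valuations, are prescribed in terms of tree-theoretic quantities such as skewness), and the incarnation on a fixed model $X$ is obtained by pushing forward from higher models; it is generically supported on all of $\NS(X)$, not just on divisors above the center of $\nu$, and its coefficients are not simply the values $\nu(\phi_E)$. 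Because of this, the two short arguments do not go through as stated: continuity requires knowing that the tree-theoretic quantities entering the intersection numbers vary continuously in the topology of pointwise convergence (which is true but is a different statement than the one you check), and injectivity cannot be read off as ``$\nu_1(\phi_E) = \nu_2(\phi_E)$ on every model'' --- that equality is not what $Z_{\nu_1}|_X = Z_{\nu_2}|_X$ gives you. The correct injectivity argument is a duality argument: a valuation in $\mathcal{V}_0$ is determined by its intersection pairing against all divisorial valuations, i.e.\ by the data encoded in $Z_\nu$. Your SNC-resolution idea is a reasonable way to recover a valuation from its values on exceptional divisors, but the bridge from equality of $b$-divisors to equality of those values is the missing (and nontrivial) step.
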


\begin{lem}[\cite{favre_jonsson_dynamical_compactification}, Lemma A.6] \label{L:functorial}
For any $\nu \in \mathcal{V}_0$ and any automorphism $F$ of $\mathbb{C}^2$, one has $Z_{F_* \nu} = F_* Z_\nu  = (F^{-1})^* Z_\nu$. 
\end{lem}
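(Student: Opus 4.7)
The plan is to verify both equalities by pairing against test classes in arbitrary birational models of $\Pg^2$, using the explicit characterization of $Z_\nu$ in terms of $\nu$. First I would recall, from Favre-Jonsson, that the $b$-divisor $Z_\nu$ is determined by its incarnation on each birational model $\pi : X \to \Pg^2$, which in turn is characterized by the intersection numbers
\begin{equation*}
(Z_\nu \cdot E) = -\nu(g_E)
\end{equation*}
for every prime divisor $E \subset X$, where $g_E$ is a local defining equation of $E$ at the center of $\nu$, normalized by the condition $\nu \in \mathcal{V}_0$ (which fixes $\nu$ on a generic affine function). By the continuity and injectivity stated in Lemma A.1, $Z_\nu$ is then uniquely determined by this data, and it suffices to check that $Z_{F_* \nu}$ and $(F^{-1})^\star Z_\nu$ agree after pairing with any such $E$.

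Second, since $F$ is a polynomial automorphism of $\C^2$, it extends to a birational self-map of $\Pg^2$ and acts simultaneously on valuations (via $(F_*\nu)(f) = \nu(f\circ F)$) and on the Picard-Manin space $\mathcal{Z}$. Picking any prime divisor $E'$ on a birational model $X' \to \Pg^2$ which resolves the indeterminacy of $F$, the projection formula applied to the graph of $F^{-1}$ gives
\begin{equation*}
\bigl((F^{-1})^\star Z_\nu \cdot E'\bigr) = (Z_\nu \cdot F^\star E'),
\end{equation*}
while the characterization above yields
\begin{equation*}
\bigl(Z_{F_*\nu} \cdot E'\bigr) = -(F_*\nu)(g_{E'}) = -\nu(g_{E'}\circ F),
\end{equation*}
where $g_{E'}$ is a local equation of $E'$ at the center of $F_* \nu$.

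Third, the main local computation is that $g_{E'}\circ F$ is, up to a unit at the center of $\nu$, a local equation for the Cartier divisor $F^\star E'$ on a sufficiently fine birational model that resolves both $F$ and $\nu$. Hence $\nu(g_{E'}\circ F) = \nu(F^\star E')$, and applying the characterization once more gives $(Z_\nu \cdot F^\star E') = -\nu(F^\star E')$. This matches the two expressions above and yields the identity $Z_{F_*\nu} = (F^{-1})^\star Z_\nu$. The remaining equality $F_* Z_\nu = (F^{-1})^\star Z_\nu$ is then formal: because $F$ is an automorphism of $\C^2$, its graph in $\Pg^2 \times \Pg^2$ admits a common resolution in which the push-forward under $F$ and the pull-back under $F^{-1}$ coincide on Néron-Severi classes, and this compatibility passes to the inductive limit defining $\mathcal{Z}$ and to its completion.

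The main obstacle I expect is the bookkeeping: one must choose a birational model which simultaneously resolves the indeterminacies of $F$ and realizes both $\nu$ and $F_*\nu$ as divisorial (or limits of divisorial) valuations, and then verify that the normalizations required by the condition $\nu \in \mathcal{V}_0$ are preserved under $F_*$ (this uses that $F$ is a polynomial automorphism, so it has well-controlled behavior at the line at infinity). A cleaner alternative strategy, exploiting the continuity provided by Lemma A.1, would be to first establish the identity on the dense subset of divisorial valuations, where both sides reduce to explicit divisor classes on a common blow-up, and then extend by continuity to all of $\mathcal{V}_0$.
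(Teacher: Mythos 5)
The paper does not supply a proof of this lemma: it is quoted verbatim as Lemma~A.6 of Favre--Jonsson \cite{favre_jonsson_dynamical_compactification}, with no argument given, so there is no internal proof for you to be compared against.

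With that understood, your sketch follows a plausible route, and the two structural steps you rely on are sound: the projection formula $\bigl((F^{-1})^* Z_\nu \cdot E'\bigr) = (Z_\nu \cdot F^* E')$ is the right way to unwind the pullback, and the remark that $F_* = (F^{-1})^*$ on the Picard--Manin space because a polynomial automorphism is birational is correct and standard. The point I would press you on is the intermediate characterization $(Z_\nu \cdot E) = -\nu(g_E)$. You treat this as \emph{the} defining property of $Z_\nu$, but in the Favre--Jonsson construction $Z_\nu$ is built as a limit of incarnations over all blow-ups, with a normalization tied to the condition $\nu \in \mathcal{V}_0$, and the pairing against a prime divisor is more delicate than a single evaluation of $\nu$ on a local equation (one must account for the center of $\nu$ relative to $E$ and for the chosen normalization of $\nu$). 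If that formula is not exactly what Favre--Jonsson use, your third step, matching $-\nu(g_{E'} \circ F)$ with $-\nu(F^* E')$, is carrying the whole weight of the proof on a characterization you have not actually established. Your closing alternative---prove the identity for divisorial valuations, where both sides are explicit classes on a common resolution, then extend by the continuity and injectivity from Lemma~A.1---is the cleaner and more defensible route, and it sidesteps the need to pin down the pairing formula for arbitrary $\nu$; I would lead with that rather than leave it as an afterthought.
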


A particular property of valuations is that they carry some geometric meaning. Take $\nu \in \mathcal{V}_0$, we choose some affine coordinates at infinity $(u,v)$ in $\mathbb{P}^2$ such that $\nu(u),\nu(v) \geq 0$. 
The center of a valuation $\nu $ in $\mathbb{P}^2$, denoted $C(\nu)$, is the Zariski closure in $\mathbb{P}^2$ of the  locus of points $p$ for which any polynomial $P \in \C[u,v]$ such that $\nu(P)>0$ must vanish at $P$.   
In our setting, the center in $\mathbb{P}^2$ of a valuation   is an irreducible subvariety of $\mathbb{P}^2$ (see e.g \cite[\S 2.1 p. 24]{vaquie_unif}); in particular, it may be either a point or an irreducible curve.

\begin{ex} The center of the valuation $\nu_0= -\deg$ is the line at infinity. 
To show this, one chooses coordinates $(u,v) \mapsto [1: u: v]$ so that $v=0$ corresponds to the line at infinity. We have $\nu_0(v) =1$ and $\nu_0(u)=0$, and $\nu(\C[u,v])\geq 0$. One sees that a polynomial $P \in \C[u,v]$ for which $\nu(P)>0$ is of the form $v^k \tilde{P}$, where $\tilde{P} \in \C[u,v]$, $k >0$ and $v \nmid \tilde{P}$. This shows that any point belongs to the center $C(v)$ if and only if it lies on the line at infinity $v=0$.  
\end{ex}

We start with the following observation. 

\begin{lem} \label{L:center}
Assume $F$ is any polynomial automorphism of $\C^2$ which contracts the line at infinity to a point $p$ at infinity. Then $F_* \nu_0$ is centered at the point $p$. 
\end{lem}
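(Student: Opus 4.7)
The plan is to work directly from the definition of center by choosing suitable affine coordinates at $p$, and verify that both coordinates vanishing at $p$ have strictly positive value under $F_*\nu_0$; this forces the maximal ideal at $p$ to be contained in the prime ideal of positive-valuation elements, pinning the center to $\{p\}$.

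First I would reduce to a normal form. Write $p = [a:b:0] \in L_\infty$ and, up to a linear change of coordinates of $\mathbb{C}^2$ (which commutes with $\nu_0$ and only moves $p$ along $L_\infty$), assume either $p = [0:1:0]$ or, more generally, work with the parameters $(a,b)$. Writing $F = (F_1, F_2)$ with $d_i := \deg F_i$ and $F_i^{(d_i)}$ its homogeneous leading part, the projective extension on $L_\infty$ sends $[X:Y:0]$ to the direction $[F_1^{(d_1)} : F_2^{(d_2)} : 0]$ after normalizing by the top degree. The contraction hypothesis then translates into an elementary constraint: either $d_1 < d_2$ (giving $p = [0:1:0]$), or $d_1 > d_2$ (giving $[1:0:0]$), or $d_1 = d_2 = d$ with $b F_1^{(d)} - a F_2^{(d)} \equiv 0$. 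In every case, this forces the auxiliary polynomial $b F_1 - a F_2$ to have degree strictly less than $\max(d_1, d_2)$.

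Next I would compute the valuation $F_*\nu_0$ on the two coordinate functions vanishing at $p$. Working in the affine chart $\{Y \neq 0\}$ with coordinates $u = x/y$, $v = 1/y$ (so $p$ corresponds to the origin when $p = [0:1:0]$, and more generally $(u - a/b, v)$ vanishes at $p = [a:b:0]$), one has
\begin{equation*}
(F_*\nu_0)(u - a/b) \;=\; -\deg\!\left( \frac{b F_1 - a F_2}{b F_2}\right) \;=\; \deg(F_2) - \deg(bF_1 - aF_2),
\end{equation*}
\begin{equation*}
(F_*\nu_0)(v) \;=\; -\deg(1/F_2) \;=\; \deg(F_2).
\end{equation*}
By the degree analysis of Step~1, both quantities are strictly positive. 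Along the way I would also note $(F_*\nu_0)(x) = -d_1 < 0$, which confirms $F_*\nu_0 \in \mathcal{V}_0$.

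Finally, let $\mathfrak{m}_p = (u - a/b, v)$ denote the maximal ideal of the local ring at $p$ in this chart. The set $\mathfrak{p} := \{P \in \mathbb{C}[u,v] : (F_*\nu_0)(P) > 0\}$ is a prime ideal, it is proper (since $(F_*\nu_0)(1) = 0$), and by the previous step it contains both generators of $\mathfrak{m}_p$, hence $\mathfrak{m}_p \subseteq \mathfrak{p} \subsetneq \mathbb{C}[u,v]$. Since $\mathfrak{m}_p$ is maximal, $\mathfrak{p} = \mathfrak{m}_p$, so the valuation ring of $F_*\nu_0$ dominates $\mathcal{O}_{\mathbb{P}^2,p}$, i.e.\ $C(F_*\nu_0) = \{p\}$. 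The main subtlety I anticipate is packaging the different subcases of the contraction ($d_1<d_2$, $d_1>d_2$, or $d_1=d_2$ with proportional leading parts) uniformly enough that the degree computation above reads off immediately; handling the chart around $[1:0:0]$ is symmetric and does not introduce new ideas.
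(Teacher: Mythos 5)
Your proposal is correct, and the core idea is the one the paper uses: the contraction hypothesis forces the pushforward valuation $F_*\nu_0$ to be strictly positive on the maximal ideal at $p$, pinning the center to $\{p\}$. The execution, however, is genuinely different in flavor. The paper argues geometrically in one stroke: since $F$ sends the generic point of $L_\infty$ to $p$, the pullback $f \circ F$ of any $f$ vanishing at $p$ must vanish along $L_\infty = \{v=0\}$, hence $\nu_0(f\circ F) > 0$; conversely if $f(p) \neq 0$ the pullback does not vanish there, which rules out any other center at infinity. You instead make an explicit degree count on the two generators $u - a/b$ and $v$ of $\mathfrak{m}_p$, deriving the key inequality $\deg(bF_1 - aF_2) < \deg F_2$ from a case analysis of the contraction ($d_1 < d_2$, $d_1 > d_2$, $d_1 = d_2$ with proportional leading parts), and then invoke primality of $\mathfrak{p} = \{P : (F_*\nu_0)(P) > 0\}$ to upgrade positivity on two generators to $\mathfrak{p} = \mathfrak{m}_p$. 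What this buys you is closed formulas, $(F_*\nu_0)(v) = \deg F_2$ and $(F_*\nu_0)(u - a/b) = \deg F_2 - \deg(bF_1 - aF_2)$, at the price of the case split; the paper's argument is shorter and avoids the case analysis but gives no numerical information. One small point you should make explicit: for $\mathfrak{p}$ to be an ideal of $\mathbb{C}[u,v]$ at all, you need $(F_*\nu_0) \geq 0$ on $\mathbb{C}[u,v]$. This is true, and follows from your computations together with the ultrametric equality: since $(F_*\nu_0)(u - a/b) > 0$ and $(F_*\nu_0)(a/b) = 0$ you get $(F_*\nu_0)(u) = 0$ (or $>0$ if $a=0$), and $(F_*\nu_0)(v) > 0$, so the valuation is nonnegative on all monomials and hence on $\mathbb{C}[u,v]$. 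The paper sidesteps this by assuming up front that the affine chart was chosen with $\nu(u), \nu(v) \geq 0$, so you should verify it explicitly in your argument rather than take it for granted.
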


\begin{proof}
Choose some affine coordinate $(u,v)$ near $p$ such that $v=0$ is the equation of the line at infinity. 
For any polynomial  $f\in \C[u,v]$ which vanishes at $p$, the function $f\circ F$ is of the form: 
\begin{equation}
 f\circ F = v^k  g,
 \end{equation} 
 where $k \in \mathbb{N}^*$, $g$ is a rational function which does not vanish on the locus $v=0$. 
 In particular, $\nu_0 (f\circ F)=k >0$. 
 Moreover, if $f$ does not vanish at $p$, then the integer $k$ will be zero, so this shows that any other point distinct from $p$ on the line at infinity is not in the center of $F_* \nu_0$. This proves that the center of $F_* \nu_0$ is reduced to the point $p$. 
\end{proof}

In the following, we will need the fact that $ Z_{\nu_0}= L$, and in particular $Z_{\nu_0} \in \mathbb{H}^\infty$. 
\begin{prop} \label{L:non-elt}
The group $\Gamma = \langle  F,G\rangle$ has a non-elementary action on $\mathbb{H}^\infty$.
\end{prop}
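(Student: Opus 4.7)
The plan is to exhibit two loxodromic elements in $\Gamma$ whose fixed-point sets on $\partial \HH^\infty$ are disjoint; the natural candidates are $F$ and $G$ themselves. Mirroring the Friedland--Milnor argument of Lemma~\ref{L:length}, $F = e_1 \cdot a$ and $G = a \cdot e_2 \cdot a^2$ are cyclically reduced alternating words in the amalgamated product $A *_{A \cap E} E$, so $\lambda_1(F) = \deg F = d_1$ and $\lambda_1(G) = \deg G = d_2$; assuming $d_1, d_2 \geqslant 2$ (the only case of interest), both $F^*$ and $G^*$ act loxodromically on $\HH^\infty$, with translation lengths $\log d_1$ and $\log d_2$ respectively.

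Next, I would identify the four boundary fixed points via valuations. By Lemma~\ref{L:functorial}, $(F^n)^* L = Z_{(F^{-n})_* \nu_0}$; since $F^*$ is loxodromic, the renormalized classes $(F^n)^* L / d_1^n$ converge projectively in $\overline{\mathcal{Z}}$ to the attracting fixed point $\theta_F^+ \in \partial \HH^\infty$ of $F^*$. By continuity and injectivity of $\nu \mapsto Z_\nu$ this corresponds to a limit valuation $\nu_F^+ := \lim_n (F^{-n})_* \nu_0 / d_1^n$ satisfying $Z_{\nu_F^+} = \theta_F^+$. The repelling fixed point $\theta_F^-$ is encoded analogously by $\nu_F^- := \lim_n F^n_* \nu_0 / d_1^n$, and one defines $\nu_G^\pm$ in the same way for $G$.

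The heart of the argument is then to compute the centers in $\Pg^2$ of these four eigenvaluations. Direct calculations in homogeneous coordinates show that $F$ contracts the line at infinity, away from its indeterminacy point, to $p_F^- := [1:0:0]$, while $F^{-1}$ contracts $L_\infty$ to $p_F^+ := [1:-1:0]$; similarly $G$ contracts $L_\infty$ to $p_G^- := [2:1:0]$ and $G^{-1}$ contracts $L_\infty$ to $p_G^+ := [2:-3:0]$. Applying Lemma~\ref{L:center} gives that $(F^{\mp 1})_* \nu_0$ is centered at $p_F^\pm$. A direct check shows that $F^{\mp 1}$ is regular at $p_F^\pm$ and fixes it, so by induction $(F^{\mp n})_* \nu_0$ remains centered at $p_F^\pm$ for every $n \geqslant 1$; a continuity argument then transfers this property to the limit valuation $\nu_F^\pm$. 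The same analysis applies to $\nu_G^\pm$ at the points $p_G^\pm$.

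Since the four points $p_F^+, p_F^-, p_G^+, p_G^-$ are pairwise distinct in $\Pg^2$, the injectivity of $\nu \mapsto Z_\nu$ forces the four boundary classes $\theta_F^\pm, \theta_G^\pm$ to be pairwise distinct, so the fixed sets of $F^*$ and $G^*$ on $\partial \HH^\infty$ are disjoint and $\Gamma$ is non-elementary. The main technical obstacle is the limit step in the third paragraph: passing from the fact that each finite iterate $(F^{\mp n})_* \nu_0$ is centered at $p_F^\pm$ to the analogous statement for the limit $\nu_F^\pm$. I expect this will require either an explicit test-function argument (exhibiting a local equation at $p_F^\pm$ whose valuation stays uniformly bounded away from zero under all iterates) or an appeal to compactness of an appropriate slice of the Favre--Jonsson valuation tree based at $p_F^\pm$.
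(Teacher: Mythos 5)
Your proposal follows essentially the same route as the paper: identify $F$ and $G$ as loxodromic, realize the boundary fixed classes as limits of renormalized pushforward valuations via Lemma~\ref{L:functorial}, and distinguish them by computing centers at infinity via Lemma~\ref{L:center}. Two small remarks: your computed contraction points $[1:-1:0]$ for $F^{-1}$ and $[2:-3:0]$ for $G^{-1}$ differ from the $[0:1:0]$ and $[-1:2:0]$ stated in the paper, but a direct check of $F^{-1} = a^{-1}e_1^{-1}$ and $G^{-1}=a^{-2}e_2^{-1}a^{-1}$ supports your values (the four points are pairwise distinct in either case, so the conclusion is unaffected). The ``limit step'' you flag---passing from the center of each $(F^{\mp n})_*\nu_0$ to the center of the limit valuation---is also left implicit in the paper's proof; it is a genuine point that is usually handled via semicontinuity of the center map in the Favre--Jonsson valuative framework, and your instinct to address it explicitly is sound.
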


\begin{proof}
Using the fact that $\lambda_1(F), \lambda_1(G)>1$, the induced transformations are loxodromic and there exist two classes $\theta_F, \theta_G$ on the boundary: 
$$ \lim \dfrac{1}{d_1^n}(F^{-n})^* L  = \theta_F $$
 $$ \lim \dfrac{1}{d_2^n}(G^{-n})^* L  = \theta_G. $$
We show that $\theta_F$ and $\theta_G$ are distinct. Using Lemma \ref{L:functorial} and $ Z_{\nu_0}= L$, we have: 
$$Z_{F^n_* \nu_0} = F^n_* L = (F^{-n})^* L$$
$$Z_{G^n_* \nu_0} = G^n_* L = (G^{-n})^* L.$$
We argue that the sequences of valuations $\nu_{n, F} := \frac{1}{d_1^n} F^n_* \nu_0$ and  $\nu_{n, G}:=  \frac{1}{d_2^n} G^n_* \nu_0$ diverge to different points of the valuative tree. 

Indeed, we first observe that $\deg(F^n) = d_1^n$, so that $(F^n)_* \nu_0 (x) \geq - d_1^n$ and $(F^n)_* \nu_0 (y) \geq - d_1^n$, so for any polynomial $P \in \C[x,y]$ we have: 
\begin{equation}
0 \geq  \dfrac{1}{d_1^n} (F^n)_* \nu_0 (P) \geq - \deg(P), 
 \end{equation} 
hence the sequence $(\nu_{n, F})$ is pointwise bounded and we can extract a subsequence converging to some $\nu_{\infty,F}$.
Similarly, we can extract a subsequence of $(\nu_{n, G})$ which converges to $\nu_{\infty,G}$.  

 If $\nu_{\infty, F} = \nu_{\infty,G}$ up to scaling, then the limiting valuations would be centered at the same point in $\mathbb{P}^2$. 
Note that $F^n$ contracts the line at infinity to the point $[1:0:0]$, so by Lemma \ref{L:center} the valuation $\nu_{\infty, F}$ is centered at $[1:0:0]$ in $\mathbb{P}^2$, whereas the valuation $\nu_{\infty,G}$ is centered at the point $a([1:0:0])= [2:1:0]$. Hence $\nu_{\infty, F}$ and $\nu_{\infty,G}$ are distinct, as required. 
The same argument shows that the valuations $\nu_{\infty,F^{-1}},\nu_{\infty,G^{-1}} $ are centered at $[0:1:0]$ and $[-1:2:0]$ respectively.
Since all the points $[1:0:0],[0:1:0], [2:1:0], [-1:2:0]$ are mutually distinct, we deduce that the semigroup generated by $F,G$ is non-elementary.
\end{proof}

\begin{proof}[Proof of Propositions \ref{P:ex-ar} and \ref{P:exist-arith}]
Let us consider two polynomials $P_1(x), P_2(x)$ in one variable of the same degree $d \geq 2$, 
so that both $P_1(x) + P_2(x)$ and $P_1(x) - P_2(x)$ still have degree $d$. Then if we define $F, G$ as above, by Lemma \ref{L:length} the semigroup $\Gamma$ 
generated by $F, G$ has arithmetic length spectrum, contained in $\log(d) \mathbb{N}$, and by Lemma \ref{L:non-elt} its action on $\mathbb{H}^\infty$ is non-elementary. 
Moreover, for any word $w = F^{i_1} G^{j_1} \dots F^{i_k} G^{j_k}$ in $F, G$ of length $n$, we have 
$$\log (\deg w) = \log(d) \sum_{\ell = 1}^k (i_\ell + j_\ell) = n \log(d),$$ 
which shows that \eqref{E:tau-arith} holds, hence $\sigma = 0$.
\end{proof}

\subsection{Proof of Theorem A for non-elementary subgroups}

We are now ready to complete the proof of our main theorem.

\begin{proof}[Proof of Theorem A; non-elementary case]
Let us first prove the CLT for $\log \deg (f_n)$. 
By \cite[Proposition 1.5]{horbez} the cocycle $\beta(g, x)$ is centerable, as a consequence of Proposition \ref{P:cocycle-prop}. 
Now, since $M$ is compact, there exists a $\mu$-stationary measure $\nu$ on $M$, and by taking its ergodic components
we can assume that $\nu$ is $\mu$-ergodic. By \cite[Proposition 4.4]{maher_tiozzo}, stationarity implies $\nu(\overline{X}^h_\infty) = 1$. 

Moreover, by Theorem \ref{sigma-CLT} we obtain that for $\nu$-almost every $x \in M$, 
the sequence $(\beta(f_n, x))$ satisfies a central limit theorem. Then, by taking a generic $x \in \overline{X}^h_\infty$ and applying 
Lemma \ref{L:cocycle-compare}, we obtain a central limit theorem for 
$$d(L, \rho_{f_n}(L)).$$
We have by definition
$$\log \deg(f_n) =  \log \left  ( {f_n^* L} \cdot L \right )$$
thus, since $$ \left  ( {f_n^* L} \cdot L \right ) \geqslant 1,$$
we get:
\begin{align*}
\log \deg(f_n) & 
=  \ch^{-1} \left  ( {f_n^* L} \cdot L \right ) + O(1) \\
& =   d(L, \rho_{f_n}(L)) + O(1) ,
\end{align*}
so we also obtain the central limit theorem for $(\log \deg f_n)$.

Let us now prove it for $(\log \lambda_1(f_n))$. As we just proved, the sequence
\begin{equation} \label{E:final-sum}
\frac{d(L, \rho_{f_n}(L))  - n \ell}{\sqrt{n}}
\end{equation}
converges in distribution to a Gaussian. Note that 
$$\log \lambda_1(f) =  \tau(f)$$
and by Lemma \ref{lem_inequality_gromov}, 
$$\log \lambda_1(f) = d(L, \rho_f(L)) - 2 \langle \rho_f (L), \rho_{f^{-1}} (L) \rangle_L + O(1).$$
Now, by Lemma \ref{L:small-gromov} applied to $\varphi(n) = \sqrt{n}$, 
\begin{equation} \label{E:sqrt-gromov}
\mathbb{P}\left( \frac{ \langle \rho_{f_n} (L), (\rho_{f_n})^{-1}(L) \rangle_L}{\sqrt{n}} \geqslant \epsilon \right) \to 0.
\end{equation}
Hence, by combining \eqref{E:final-sum} and \eqref{E:sqrt-gromov}, we get a CLT for $\log \lambda_1(f_n)$, as required.

Finally, if $\sigma = 0$, then, as in \cite[Proof of Theorem 4.7]{benoist_quint_hyperbolic}, there exists $\lambda$ such that  
\begin{equation} \label{E:tau-arith}
\tau(g) = n \lambda
\end{equation}
for any $g$ in the support of $\mu_n$. This implies that $G$ has arithmetic length spectrum, completing the proof. 
\end{proof}

\bibliographystyle{amsalpha}
\bibliography{references}

\end{document}